\theoremstyle{plain}
\newtheorem{theorem}{Theorem}[section]
\newtheorem{lemma}[theorem]{Lemma}
\newtheorem{proposition}[theorem]{Proposition}
\theoremstyle{definition}
\newtheorem{rem}[theorem]{Remark}
\newtheorem{definition}[theorem]{Definition}
\newtheorem{exm}[theorem]{Example}
\newtheorem{exr}[theorem]{Exercise}
\newtheorem*{ack}{Acknowledgement}
\newenvironment{example}%
{\pushQED{\qed}\begin{exm}}%
{\popQED\end{exm}}  
\newenvironment{exercise}%
{\pushQED{\qed}\begin{exr}}%
{\popQED\end{exr}}  
\newenvironment{remark}%
{\pushQED{\qed}\begin{rem}}%
{\popQED\end{rem}}  
\numberwithin{equation}{section}
\newcommand{\abs}[1]{{\lvert #1 \rvert}}
\newcommand{\set}[1]{\left\{#1\right\}}
\DeclareMathOperator{\M}{\mathsf M}
\newcommand{\A}{{\ensuremath{\mathcal A}}}
\newcommand{\B}{{\mathcal B}}   
\newcommand{\CC}{{\mathcal C}}  
\newcommand{\F}{{\mathcal F}}
\newcommand{\X}{{\mathfrak X}}
\newcommand{\Z}{{\mathbb Z}}
\newcommand{\R}{{\mathbb R}}
\newcommand{\C}{{\mathbb C}}
\renewcommand{\k}{{\Bbbk}}
\newcommand{\irr}{{\mathrm{irr}}}  
\newcommand{\bottom}{{\hat{0}}}
\renewcommand{\top}{[n]}
\newcommand{\m}{{\mathbf m}} 
\renewcommand{\P}{{\mathbb P}}
\newcommand{\PT}{T_0}  
\newcommand{\crit}{{\mathcal S}}  
\DeclareMathOperator{\Hom}{Hom}
\DeclareMathOperator{\rank}{rank}
\DeclareMathOperator{\Gr}{Gr}  
\DeclareMathOperator{\stab}{stab} 
\DeclareMathOperator{\vc}{vc}  
\DeclareMathOperator{\dCP}{wnd}  
\newcommand{\Be}{\widetilde{B}}  
\newcommand{\Ne}{\mathcal{N}}    
\newcommand{\Nfan}{\widetilde{\Ne}}    
\DeclareMathOperator{\Log}{Log}
\DeclareMathOperator{\In}{In}    
\newcommand{\cl}[1]{{\rm cl}(#1)}  
\newcommand{\supp}[1]{{\rm supp}(#1)}
\newcommand{\conv}{{\rm conv}}
\newcommand{\row}{\mathsf{row}}
\newcommand{\col}{\mathsf{col}}
\begin{document}

\title{Toric and tropical compactifications of hyperplane complements}

\author{Graham Denham}
\address{
G.~Denham\\
Department of Mathematics\\
University of Western Ontario\\
London, ON N6A 5B7\\
Canada
}
\email{gdenham@uwo.ca}
\thanks{Partially supported by a grant from NSERC of Canada.}

\date{\today}


\begin{abstract}
These lecture notes survey and compare various compactifications of
complex hyperplane arrangement complements.  In particular,
we review the Gel$'$fand-MacPherson
construction, Kapranov's visible contours compactification, and De Concini
and Procesi's wonderful compactification.  We explain how these constructions
are unified by some ideas from the modern origins of tropical geometry.
\end{abstract}

\subjclass[2000]{
Primary
52C35, 
Secondary
05B35  
}

\keywords{hyperplane arrangement, Bergman fan, tropicalization, wonderful
compactification}

\maketitle
\setcounter{tocdepth}{1}
\tableofcontents

\section{Introduction}

The purpose of this paper is to give a gentle introduction to toric and tropical
compactifications of linear spaces.  The material is based on lectures given
at the graduate student 
summer school ``Arrangements in Pyr\'en\'ees'' in June 2012.    
The presentation gives emphasis to 
explicit examples and calculations, and we suggest some exercises 
for the reader.
The material here can be found in various sources, and we give a guide to 
the literature without attempting to be comprehensive.
Some proofs are included, particularly when it is possible to make a
short or self-contained argument.

We review some terminology of matroid theory
in \S\ref{sec:arrangements}, and note the relationship between matroid
realizations, hyperplane arrangements, and linear subspaces of algebraic
tori.  In \S\ref{sec:toric}, we briefly recall
those aspects of the theory of toric varieties that play a key role in
the constructions we consider.
After some discussion of some basics of tropical geometry
(\S\ref{sec:tropical}), arrangement compactifications appear
in \S\ref{sec:cpct}.  We single out two important developments in 
the field.  The first is Gel$'$fand, Goresky, MacPherson
and Serganova's construction of the 
matroid stratification of the
Grassmannian, \cite{GGMS87,GM82}.  The second is De Concini and Procesi's
wonderful compactification~\cite{deCP95}.  We outline in 
detail some discoveries that tie the two together, due to
Feichtner and Sturmfels~\cite{FS05}, and Ardila and Klivans~\cite{AK06}.

\section{Hyperplane Arrangements}\label{sec:arrangements}
\subsection{Matroids}
A matroid is a structure that abstracts the linear (in)dependence properties
of a finite collection of vectors.  Matroids have numerous equivalent
definitions, and we mention the book \cite{Oxbook} as a detailed reference.
As a brief introduction, let us define a matroid $\M$ to be a nonempty collection
of subsets $\B$ of a finite set $E$ that possesses the following property.
That is, if $B,B'\in\B$, and $x\in B-B'$, then there is an element $y\in B'-B$
for which $(B-\set{x})\cup\set{y}\in\B$.

Then $\B$ is called the set of {\em bases} of the matroid.  One can prove
that all bases have the same cardinality.  This number is called the {\em rank}
of the matroid, an integer $d\geq0$.  Let
\[
\In(\M)=\set{I\subseteq E\colon I\subseteq B\text{~for some~}B\in \B},
\]
called the {\em independent} sets of $\M$.
The reader may notice that this exactly to say that the
$\In(\M)$ forms a simplicial complex with vertices $E$ in which the
bases are the maximal simplices.  This is called the {\em matroid complex}
of $\M$.

Without loss, we will assume $E=[n]:=\set{1,2,\ldots,n}$, for some $n\geq1$.
The {\em rank} of a set $S\subseteq [n]$ is defined to be the cardinality of
its largest independent subset, which we denote by $r(S)$.  We define
the {\em closure} of a set $S$ to be
\[
\cl{S}= \bigcup_{\substack{T\colon S\subseteq T\subseteq[n],\\
r(T)=r(S)}} T.
\]
A set $X\subseteq[n]$ for which $\cl{X}=X$ is
called a {\em flat} of $\M$.  Let $L(\M)$ denote the set of flats
$\M$, and $L_p(\M)$ the subset of flats of rank $p$, for $0\leq p\leq d$.
The set $L(\M)$ is partially ordered by inclusion.  In fact, $L(\M)$ is a
lattice, with operations
\begin{align*}
X\wedge Y =& X\cap Y\text{~and}\\
X\vee Y = & \cl{X\cup Y}.
\end{align*}
It turns out that the lattice $L(\M)$ is geometric, and all
geometric lattices arise in this way (for which, see \cite[\S 1.7]{Oxbook}).
$L(\M)$ has a minimal element, $\bottom:=\cl{\emptyset}$.

If $\M$ is a matroid on $E$ with bases $\B$, the {\em dual matroid}
$\M^*$ is the matroid with bases $\set{E-B\colon B\in\B}$.  In particular,
$\rank(\M)+\rank(\M^*)=n$.  Clearly the construction of duals is an
involution, and it abstracts the idea of orthogonal complements: see
Example~\ref{ex:dual} below.

Finally, an element $x\in[n]$ for which $r(\set{x})=0$ is a {\em loop}, and
distinct elements $x,y\in[n]$ are {\em parallel} if $r(\set{x,y})=1$.  A
matroid $\M$ is {\em simple} if it has no loops or parallel edges.
\subsection{Restrictions and sums}\label{ss:restrs}
If $X\subseteq[n]$, then the set $\set{I\cap X\colon I\in\In(\M)}$
defines the independent sets of a matroid on $X$.  This is denoted $\M|X$, the
{\em restriction} of $\M$ to $X$.  If $X$ is a flat of $\M$, then
flats of $\M|X$ are exactly those
flats of $\M$ contained in $X$, which is to say that $L(\M|X)=[\bottom,X]$,
an interval in $L(\M)$.  

Dually, for $X\subseteq[n]$, the {\em contraction} of $\M$ by $X$, written
$\M/X$, is the matroid on the set $[n]-X$ defined by the rank
function
\[
r_{\M/X}(S):=r_{\M}(S\cup X)-r_{\M}(X),
\]
for $S\subseteq [n]-X$.
The flats of $\M/X$ are in bijection with flats of $\M$ containing $X$, so
$L(\M/X)\cong [\cl{X},[n]]$: see \cite[Prop.~3.3.8]{Oxbook}.  If $X$ is not a 
flat of $\M$, then any element $e\in \cl{X}-X$ is a loop in $\M/X$.

If $\M_1$ and $\M_2$ are matroids on $E_1$ and $E_2$, respectively,
the sum $\M=\M_1\oplus \M_2$ is defined to have bases
\[
\B=\set{B_1\cup B_2\colon B_1\in \B(\M_1),\, B_2\in \B(\M_2)}.
\]
Alternatively, the independence complex of $\M$ is the simplicial
join $\In(M_1)*\In(M_2)$, and $L(\M)=L(\M_1)\times L(\M_2)$.
  A matroid is {\em connected} if it cannot
be (nontrivially) decomposed as a sum of matroids.  Both $E_1$ and $E_2$
are flats of $\M_1\oplus \M_2$; conversely, $\M$ is connected provided that
there are no sets $X\subseteq[n]$ for which $X$ and $[n]-X$ are both flats
of $\M$.  The obvious
notion of connected components is well-defined, and we
denote the number of them by $\kappa(\M)$.  A flat $X$ is
said to be {\em irreducible} if the restriction $\M|X$ is connected.
We let $L_\irr(\M)$ denote the set of irreducible flats.

\subsection{Polytopes}\label{ss:polytopes}
There is a remarkable relationship between matroids,
convex geometry and toric varieties, discovered
by Gel$'$fand, Goresky, MacPherson, and Serganova \cite{GGMS87}.
In order to begin to describe it,
let $e_1,\ldots,e_n$ be the standard basis for $\Z^n$, and for any
subset $S\subseteq[n]$, let $e_S=\sum_{i\in S}e_i$.

If $\M$ is a matroid on $E=[n]$, let
\[
P_{\M}=\conv\set{e_B\colon B\in\B}\subseteq\R^n,
\]
the convex hull of the indicator vectors on the bases of $\M$.
This is the {\em matroid
polytope} of $\M$.
If $\M$ has rank $d$, then $P_{\M}\subseteq d\cdot\Delta^{n-1}$, where
$\Delta^{n-1}=\conv(\set{e_1,\ldots,e_n})\subseteq\R^n$, the
standard $(n-1)$-dimensional simplex.  

If $\M$ is connected, then
$P_{\M}$ has dimension $n-1$.  If $\M$ decomposes as a sum, then 
$P_{\M}$ is a corresponding cartesian product, so the dimension 
of $P_{\M}$ is $n-\kappa(\M)$ (see~\cite{FS05}).  
Faces of the matroid polytope are
themselves matroid polytopes.   Such matroids are obtained from $\M$
by deleting independent sets, so are called {\em degenerations} of $\M$.
If $F$ is a face of $P_{\M}$, let $\M_F$ denote the matroid whose
bases are the vertices of the face $F$.  For a linear functional $u\in\R^n$,
let $F$ be the face of $P_{\M}$ on which $u$ achieves its minimum,
and let $\M_u$ denote the matroid corresponding to the face: then
$F=P_{\M_u}$.

The face structure of the matroid polytope was worked out as follows
in \cite{AK06,FS05}.  We assume that $\M$ is connected.  For $u\in \R^n$, let
\begin{equation}\label{eq:Fu}
\F(u):=(\emptyset=F_0\subset F_1\subset \cdots \subset F_k=[n])
\end{equation}
be the (unique) chain of subsets for which the coefficients
$\set{u_i\colon i\in F_a-F_{a-1}}$ are constant for all $1\leq a\leq k$,
and $u_i\leq u_j$ whenever $i\in F_a$ and $j\in F_b$ for $a<b$.

Then the face of $P_{\M}$ that minimizes $u$ depends only on $\F(u)$,
and
\[
\M_u=\bigoplus_{i=1}^k (M|F_i)/F_{i-1}.
\]

\begin{theorem}[\cite{AK06,FS05}]\label{thm:flacet0}
The following are equivalent for a vector $u\in\R^n$:
\begin{itemize}
\item The matroid $\M_u$ has no loops;
\item The subsets in the chain $\F(u)$ are flats of $\M$;
\item The face $F=P_{\M_u}$ is not contained in the boundary of the simplex
$\partial(d\cdot\Delta^{n-1})$.
\end{itemize}
If $i$ is a loop in $\M_u$, then $e_i$ is an (inner) normal vector for
$F$.
\end{theorem}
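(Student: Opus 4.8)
The plan is to establish the equivalence by a cycle of implications, using the explicit description $\M_u=\bigoplus_{i=1}^k(\M|F_i)/F_{i-1}$ together with the earlier descriptions of restriction, contraction, and sums of matroids. First I would unwind what a loop in $\M_u$ means in terms of this decomposition: since loops of a direct sum are precisely the loops of the summands, and since restriction to a flat does not create loops, an element $j\in F_a-F_{a-1}$ is a loop of $\M_u$ exactly when it is a loop of $(\M|F_a)/F_{a-1}$, i.e.\ when $r_{\M}(F_{a-1}\cup\set{j})=r_{\M}(F_{a-1})$, equivalently $j\in\cl{F_{a-1}}$ inside $\M|F_a$, and hence (since $F_{a-1}\subseteq F_a$) inside $\M$. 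So ``$\M_u$ has no loops'' says precisely that no $F_{a-1}$ picks up an extra element of $F_a$ in its closure. I would then argue this is equivalent to each $F_{a-1}$ being a flat of $\M$: running $a$ from $1$ to $k$, if every $F_{b}$ with $b<a$ is already a flat, then $\cl{F_{a-1}}$ meets $F_a$ only in $F_{a-1}$ forces $\cl{F_{a-1}}=F_{a-1}$ using that the chain exhausts $[n]$ (any element of $\cl{F_{a-1}}$ lies in some $F_b$, and one checks it cannot lie in $F_b-F_{b-1}$ for $b\geq a$ without creating a loop, nor for $b<a$ since those are flats); conversely if all $F_a$ are flats then no loops appear. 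This handles the equivalence of the first two bullets.

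For the third bullet I would use the facet/face dictionary for the simplex $d\cdot\Delta^{n-1}$: its boundary facets are the sets $\set{x\in d\cdot\Delta^{n-1}\colon x_i=0}$ for $i\in[n]$, with inner normal $e_i$. A face $F=P_{\M_u}$ lies in $\partial(d\cdot\Delta^{n-1})$ iff some coordinate $x_i$ vanishes identically on $P_{\M_u}$, i.e.\ iff $i$ belongs to no basis of $\M_u$, i.e.\ iff $i$ is a loop of $\M_u$. (Here I use $P_{\M_u}\subseteq d\cdot\Delta^{n-1}$ from \S\ref{ss:polytopes}, and that $e_i\geq 0$ on the whole simplex, so $x_i\equiv 0$ on a face is the same as that face lying in the corresponding boundary facet.) This simultaneously proves the equivalence with the first bullet and the final sentence: if $i$ is a loop of $\M_u$ then $x_i$ is a linear functional that is everywhere $\geq 0$ on $P_{\M}$ and identically $0$ on $F$, hence an inner normal vector for $F$.

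Finally I would assemble these: (no loops) $\Leftrightarrow$ (no $x_i$ vanishes on $F$) $\Leftrightarrow$ ($F\not\subseteq\partial(d\cdot\Delta^{n-1})$), and separately (no loops) $\Leftrightarrow$ (chain consists of flats) from the matroid-theoretic argument. The main obstacle I anticipate is the inductive step showing that ``$\M_u$ loopless'' forces every $F_a$ in the chain to be a flat: one has to use the defining property of $\F(u)$ — that the coordinates of $u$ are constant on each block $F_a-F_{a-1}$ and strictly increase from block to block — to rule out that $\cl{F_{a-1}}$ reaches forward into a later block, and to combine this with the sum decomposition correctly. Everything else is a matter of carefully translating between ``loop of $\M_u$'', ``coordinate vanishing on the face'', and ``containment in a boundary facet of the simplex''.
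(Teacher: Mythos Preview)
The paper does not supply a proof of this theorem: it is simply stated with citation to \cite{AK06,FS05}. So there is no in-paper argument to compare against, and I can only comment on your proposal on its own merits.

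Your argument is correct. One expository point: the induction you set up for ``no loops $\Rightarrow$ each $F_a$ is a flat'' is more machinery than needed, and as written the indexing is a bit tangled (your inductive hypothesis already gives $\cl{F_{a-1}}=F_{a-1}$). The clean version is direct: if some $F_a$ is not a flat, take $j\in\cl{F_a}\setminus F_a$; then $j\in F_b\setminus F_{b-1}$ for some $b>a$, and since $F_a\subseteq F_{b-1}$ we have $j\in\cl{F_a}\subseteq\cl{F_{b-1}}$, so $j$ is a loop of $(\M|F_b)/F_{b-1}$, hence of $\M_u$. This is exactly your parenthetical ``cannot lie in $F_b-F_{b-1}$ for $b\geq a$ without creating a loop'', so you already have the key monotonicity-of-closure step; you can drop the induction scaffolding entirely.

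For the equivalence with the third bullet, your argument is fine but one step deserves to be made explicit: that a convex set contained in $\partial(d\cdot\Delta^{n-1})$ must lie in a single coordinate facet $\set{x_i=0}$. The quickest justification is by contrapositive via averaging: if $i$ is not a loop of $\M_u$ for any $i$, then for each $i$ some basis of $\M_u$ contains $i$, so the barycenter of the vertices of $P_{\M_u}$ has all coordinates strictly positive and lies in the open simplex. With that in place, your identification ``$F\subseteq\partial(d\cdot\Delta^{n-1})\Leftrightarrow$ some $x_i\equiv 0$ on $F\Leftrightarrow i$ is a loop of $\M_u$'' goes through, and the final sentence about $e_i$ being an inner normal follows exactly as you say.
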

Clearly the number of components 
$\kappa(\M_u)\geq k$; if in fact this is an equality,
then $P_{\M_u}$ is face of codimension $k-1$ in $P_{\M}$.  In the case
where $k=2$, we obtain a description of facets.  That is, 
if $\M_u$ contains no loop, then $\F(u)=(\emptyset\subset X
\subset[n])$ for some proper flat $X$.  We may shift $u$ by a multiple
of the vector $e_{[n]}$ and rescale by a positive number without changing
the face $P_{\M_u}$, so we may assume that $u=-e_X$.
\begin{theorem}[\cite{FS05}]\label{thm:flacet}
A facet $F$ of $P_{\M}$ which is not contained in $\partial(d\cdot\Delta^{n-1})$
has an inner normal vector $-e_X$ for some flat $X\in L(\A)$.  Such
facets are in bijection with the set
\begin{equation}\label{eq:flacet}
\set{X\in L(\A)\colon \M|X\text{~and~}\M/X\text{~are both connected.}}
\end{equation}
\end{theorem}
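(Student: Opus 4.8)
The plan is to deduce this directly from Theorem~\ref{thm:flacet0} together with the combinatorial bookkeeping on $\kappa(\M_u)$ that precedes the statement. First I would observe that a facet $F$ not contained in $\partial(d\cdot\Delta^{n-1})$ has, by the last clause of Theorem~\ref{thm:flacet0}, an inner normal vector that is \emph{not} of the form $e_i$; more precisely, since $\M_u$ has no loops, the chain $\F(u)$ consists of flats. Because $F$ has codimension one, and $P_{\M_u}$ has codimension $\kappa(\M_u)-1$ when $\kappa(\M_u)=k$ (and in general $\kappa(\M_u)\geq k$ with codimension $k-1$ forced from below by the chain length), the only way to get codimension exactly $1$ is $k=2$. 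Hence $\F(u)=(\emptyset\subset X\subset[n])$ with $X$ a proper nonempty flat, and after the stated normalization (shift by a multiple of $e_{[n]}$, rescale by a positive scalar) we may take $u=-e_X$. This gives the first sentence of the theorem.

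Next I would identify \emph{which} flats $X$ actually arise. With $\F(u)=(\emptyset\subset X\subset[n])$ the formula $\M_u=\bigoplus_{i=1}^k(\M|F_i)/F_{i-1}$ specializes to
\[
\M_u = (\M|X) \oplus (\M/X).
\]
So $F=P_{\M_u}$ and $\dim F = (n - \kappa(\M|X)) + \big((n-|X|)\cdot 0\text{-correction}\big)$ — more cleanly, by the dimension formula $\dim P_{\M_u} = n - \kappa(\M_u) = n - \kappa(\M|X) - \kappa(\M/X)$, using that $\kappa$ is additive over direct sums. Since $\dim P_{\M}=n-1$ (as $\M$ is connected), $F$ is a facet exactly when
\[
\kappa(\M|X) + \kappa(\M/X) = 2,
\]
i.e.\ when $\kappa(\M|X)=\kappa(\M/X)=1$, which is precisely the condition that $\M|X$ and $\M/X$ are both connected. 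This establishes the set \eqref{eq:flacet} as the set of flats giving facets.

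Finally I would check that the correspondence $X\mapsto F=P_{(\M|X)\oplus(\M/X)}$ is a bijection onto the facets not in $\partial(d\cdot\Delta^{n-1})$. Surjectivity is the content of the first paragraph. For injectivity, I would recover $X$ from the facet: $X$ is exactly the support of the inner normal direction $-e_X$, or equivalently the set of coordinates on which $u$ takes its smaller value; since the normal ray of a facet is unique up to positive scaling and the translation ambiguity is only along $e_{[n]}$, the flat $X$ is determined by $F$. The one point needing a word of care is the normalization step: one must verify that shifting $u$ by $t\,e_{[n]}$ and scaling by a positive constant genuinely does not change the minimizing face $F$ of $P_{\M}$ — this is because $P_{\M}\subseteq d\cdot\Delta^{n-1}$ lies in the hyperplane $\sum x_i = d$, so $\langle e_{[n]},x\rangle$ is constant on $P_{\M}$, and positive scaling preserves the argmin. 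I expect the main (though still mild) obstacle to be making the codimension count airtight: one must be sure that for a connected matroid no facet other than those on $\partial(d\cdot\Delta^{n-1})$ can have $k\geq 3$ forced down to codimension $1$ by coincidental extra splitting — but this is exactly ruled out because $k=2$ already forces codimension $\geq 1$, and any chain of length $k$ forces codimension $\geq k-1$, so $k>2$ cannot yield a facet.
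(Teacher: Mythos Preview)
Your proposal is correct and follows essentially the same approach as the paper: the paper's own argument is the short paragraph immediately preceding the theorem (there is no separate proof environment), which invokes Theorem~\ref{thm:flacet0}, the codimension count $\dim P_{\M_u}=n-\kappa(\M_u)$, and the normalization $u=-e_X$ exactly as you do. Your write-up simply makes explicit the steps the paper leaves implicit, in particular the computation $\kappa(\M|X)+\kappa(\M/X)=2$ and the injectivity of $X\mapsto F$.
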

For reasons which will be more evident later, it will be convenient to 
think of outer normal vectors of matroid polytopes, instead of inner ones.
\begin{definition}
For a matroid $\M$, 
let $\Sigma_{\M}$ denote the outer normal fan of the matroid polytope
$P_{\M}$.  By definition, this is an equivalence relation on vectors
in $\R^n/\R(1,1,\ldots,1)$, where $u\sim v$ if and only if $u$ and $v$
achieve their maximum value on the same face of $P_{\M}$.  The
equivalence classes form the relative interiors of polyhedral cones:
we refer to \cite[\S 7.1]{Zbook} for background.
\end{definition}

\begin{example}\label{ex:uniform0}
For integers $1\leq d\leq n$, the {\em uniform matroid} $U_{d,n}$ has
underlying set $[n]$, and its bases consist of all subsets of $[n]$ of
size $d$.  The matroid polytope of $U_{2,4}$ is a $3$-dimensional polytope
in $\R^4$.  It is shown in Figure~\ref{fig:octahedron} with the facets
of the form \eqref{eq:flacet} shaded.
\end{example}
\begin{figure}
\subfigure[$P_{\M}$ and some degenerations]{
\begin{tikzpicture}[scale=2,line join=bevel,z={(-0.2,-0.12)},text opacity=1.0]
\coordinate[label=above right:$13$] (A13) at (0,0,-1) {};
\coordinate[label=left:$23$] (A23) at (-1,0,0);
\coordinate (A24) at (0,0,1);
\coordinate[label=right:$14$] (A14) at (1,0,0);
\coordinate[label=above:$12$] (A12) at (0,1,0);
\coordinate[label=below:$34$] (A34) at (0,-1,0);

\coordinate (O) at (0,0,0);
\coordinate (F1) at (1/3,1/3,-1/3);
\coordinate (F2) at (-1/3,1/3,1/3);
\coordinate (F3) at (-1/3,-1/3,-1/3);
\coordinate (F4) at (1/3,-1/3,1/3);
\coordinate (E1) at (-1/3,-1/3,1/3);
\coordinate (E2) at (1/3,-1/3,-1/3);
\coordinate (E3) at (1/3,1/3,1/3);
\coordinate (E4) at (-1/3,1/3,-1/3);

\coordinate (S1) at (1,1,-1);
\coordinate (S2) at (-1,1,1);
\coordinate (S3) at (-1,-1,-1);
\coordinate (S4) at (1,-1,1);




\node [rectangle,draw,inner sep=2pt,fill=white] (E3mat) at ($ (O)!3!(E3) $) {
\begin{tikzpicture}
\node (m1) [circle,draw,inner sep=1pt] at (0,0) [label=above:{\tiny $3$}] {};
\node (m2) [circle,fill=black,inner sep=1pt] at (0.3,0) [label=above:{\tiny $1$}] {};
\node (m3) [circle,fill=black,inner sep=1pt] at (0.6,0) [label=above:{\tiny $2$}] {};
\node (m4) [circle,fill=black,inner sep=1pt] at (0.9,0) [label=above:{\tiny $4$}] {};
\draw (m2) -- (m4);
\end{tikzpicture}
};
\node [rectangle,draw,inner sep=2pt,fill=white] (F4mat) at ($ (O)!3!(F4) $) {
\begin{tikzpicture}
\node [circle,fill=black,inner sep=1pt] at (0,0) [label=above:{\tiny $1,2,3$}] {};
\node [circle,fill=black,inner sep=1pt] at (0.7,0) [label=above:{\tiny $\phantom{1,}4\phantom{,2}$}] {};
\end{tikzpicture}
};

\node [rectangle,draw,inner sep=2pt,fill=white] (E1mat) at ($ (O)!3!(E1) $) {
\begin{tikzpicture}
\node (m1) [circle,draw,inner sep=1pt] at (0,0) [label=above:{\tiny $1$}] {};
\node (m2) [circle,fill=black,inner sep=1pt] at (0.3,0) [label=above:{\tiny $2$}] {};
\node (m3) [circle,fill=black,inner sep=1pt] at (0.6,0) [label=above:{\tiny $3$}] {};
\node (m4) [circle,fill=black,inner sep=1pt] at (0.9,0) [label=above:{\tiny $4$}] {};
\draw (m2) -- (m4);
\end{tikzpicture}
};

\node [rectangle,draw,inner sep=2pt,fill=white] (F2mat) at ($ (O)!3!(F2) $) {
\begin{tikzpicture}
\node [circle,fill=black,inner sep=1pt] at (0,0) [label=above:{\tiny $1,3,4$}] {};
\node [circle,fill=black,inner sep=1pt] at (0.7,0) [label=above:{\tiny $\phantom{1,}2\phantom{,2}$}] {};
\end{tikzpicture}
};

\draw (A13) -- (A23) -- (A12) -- cycle;
\draw [fill opacity=0.5,fill=orange!80!black](A14) -- (A13) -- (A12) -- cycle;
\draw [fill opacity=0.5,fill=green!80!blue](A13) -- (A23) -- (A34) -- cycle;
\draw (A14) -- (A13) -- (A34) -- cycle;
\draw [fill opacity=0.5,fill=green!30!black](A23) -- (A24) -- (A12) -- cycle;
\draw  (A24) -- (A14) -- (A12) -- cycle;
\draw  (A23) -- (A24) -- (A34) -- cycle;
\draw  [fill opacity=0.5,fill=purple!70!black](A24) -- (A14) -- (A34) -- cycle;


\draw[->,style=thin] (F2) -- ($ (O)!2.5!(F2) $);
\draw[->,style=thin] (F4) -- (F4mat);


\draw[->,style=thin] (E1) -- ($ (O)!2.5!(E1) $);
\draw[->,style=thin] (E3) -- ($ (O)!2.5!(E3) $);

\node at (A24) [label=below left:$24$] {};

\end{tikzpicture}
}
\qquad
\subfigure[Rays of the fan $\Sigma_{\M}$]{
\raise25pt\hbox{$
\begin{tikzpicture}[scale=2,line join=bevel,z={(-0.2,-0.12)},text opacity=1.0,
plain/.style={->,thin}]
\coordinate (z) at (2/3,2/3,2/3);
\coordinate (1) at (2/3,2/3,-2/3);
\coordinate (2) at (2/3,-2/3,2/3);
\coordinate (12) at (2/3,-2/3,-2/3);
\coordinate (3) at (-2/3,2/3,2/3);
\coordinate (13) at (-2/3,2/3,-2/3);
\coordinate (23) at (-2/3,-2/3,2/3);
\coordinate (123) at (-2/3,-2/3,-2/3);

\foreach \c in {(z),(12),(13),(23)} {
  \draw[style={->,dotted,thick}] (0,0,0) -- \c;
}

\foreach \c in {(1),(2),(3),(123)} {
  \draw[style={->,very thick}] (0,0,0) -- \c;
}

\draw[style=very thin,dashed,color=gray]
(z) -- (1) -- (12) -- (2) -- (23) -- (3) -- (13) -- (123) -- (23);
\draw[style=very thin,dashed,color=gray]
(1) -- (13);
\draw[style=very thin,dashed,color=gray]
(2) -- (z) -- (3);
\draw[style=very thin,dashed,color=gray]
(123) -- (12);
\end{tikzpicture}
$}\label{fig:octahedron_b}
}

\caption{The matroid polytope of $U_{2,4}$}\label{fig:octahedron}
\end{figure}
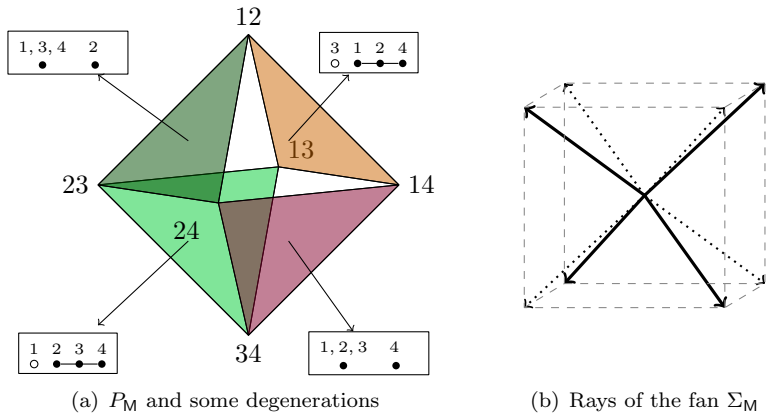
\begin{remark}
Various other polytopes can be associated with a matroid, and the reader
should see \cite[Ch.~40]{Sch03} for details.  In particular, the independent
set polytope is the convex hull of indicator functions of all independent
sets, and $P_{\M}$ is its facet with normal vector $e_{[n]}$.
\end{remark}

\begin{exercise}\label{ex:dual_polytope}
Show that if $\M$ is a matroid on the set $[n]$, then the matroid polytopes
for $\M$ and its dual are related by $P_{\M^*}=e_{[n]}-P_{\M}$, where
``$-$'' denotes reflection through the origin.  Conclude the outer normal fan of
$P_{\M}$ is the inner normal fan of $P_{\M^*}$.
\end{exercise}

\subsection{Linear matroids}\label{ss:linear}
A hyperplane arrangement is, informally, a linear realization of a matroid
with no loops.  (Our standard reference for hyperplane arrangements is the
book \cite{OTbook}.)  In order to make this informal notion precise,
let $\k$ be an algebraically closed field, and let $V$ be a $d$-dimensional
linear subspace of $\k^n$, where $d\geq1$
If we let $f\colon V\hookrightarrow\k^n$
denote the inclusion, its $i$th coordinate is a linear map, $f_i\in V^*$.
$V$ determines a matroid $\M(V)$ on the set of coordinates $[n]$:
one declares a set $I\subseteq [n]$ to be an independent set of 
$\M(V)$ if and only
if the set $\set{f_i\colon i\in I}$ is linearly independent.  Then the
remaining matroid-theoretic terms have straightforward linear-algebraic
counterparts.  Such a matroid is said to be {\em linear}, or a {\em
linear realization} of its isomorphism type.

Let $x_1,\ldots,x_n$ be coordinate functions on $\k^n$, and let
$\CC_n=\big\{\hat{H}_1,\ldots,\hat{H}_n\big\}$ denote the set of
coordinate hyperplanes, $\hat{H}_i=\ker x_i$ for $1\leq i\leq n$.
Note that $V\subseteq\hat{H}_i$ if
and only if $f_i=0$, which is equivalent to $i$ being a loop of $\M(V)$.
Provided that $\M(V)$ has no loops, $H_i:=\hat{H_i}\cap V$ is a hyperplane of
$V$ for all $i$.  Let $\A=\A(V)=\set{H_1,\ldots,H_n}$.  This is
a (central, essential) hyperplane arrangement in $V$.  By construction,
$H_i=\ker f_i$ for $1\leq i\leq n$, so $\set{i,j}$ is dependent in
$\M(V)$ if and only if $H_i=H_j$.  Thus a simple
matroid $\M(V)$ gives an arrangement of distinct hyperplanes.
Conversely, a set of $n$ hyperplanes in a $d$-dimensional
linear space $V$ is called {\em essential} if $\bigcap_{i=1}^n H_i=\set{0}$,
and in this case, any
choice of defining equations for the hyperplanes give a linear
realization of a simple matroid with no loops.

\begin{example}\label{ex:uniform}
Choose integers $1\leq d\leq n$, and let $A=A_{d,n}$ be a $d\times n$ matrix
for which any $d$ columns are linearly independent.  Let $V=\row(A)$, the
span of the rows.  The matroid $\M(V)$ is a realization of the
{\em uniform matroid} $U_{d,n}$.  The independent sets consist of all
subsets of $[n]$ of cardinality at most $d$.  The hyperplane arrangement $\A$
is called
the {\em generic arrangement} of $n$ hyperplanes in $\k^n$.  If $n=d$, then
$V=\k^n$, and $\A=\CC_n$, called the {\em Boolean arrangement} of rank $n$.
If $d=2$, $\A$ consists of $n$ lines in the plane through the origin.
\end{example}

\begin{example}\label{ex:delA3}
Let $V=\row(A)$, where
\[
A=\begin{pmatrix}
1 & 0 & 0 & 1 & 1 \\
0 & 1 & 0 & -1 & 0 \\
0 & 0 & 1 & 0 & -1
\end{pmatrix};
\qquad\text{$\M(V)$:}\quad
\def\dx{0.5}\def\dy{0.707}
\raise10pt\hbox{$
\begin{tikzpicture}[scale=0.75,baseline=(current bounding box.center),
plain/.style={circle,draw,inner sep=1.5pt,fill=black}]
\node[plain] (1) at (0,0) [label=above:$1$] {};
\node[plain] (2) at (-\dx,-\dy) [label=above left:$2$]{};
\node[plain] (4) at (-2*\dx,-2*\dy) [label=above left:$4$] {};
\node[plain] (3) at (\dx,-\dy) [label=above right:$3$] {};  
\node[plain] (5) at (2*\dx,-2*\dy) [label=above right:$5$] {};
\draw (1) -- (2) -- (4);
\draw (1) -- (3) -- (5);
\end{tikzpicture}
$}
\qquad
\text{$\A(V)$:}\quad
\lower30pt\hbox{$
\begin{tikzpicture}[scale=0.9]
\begin{scope}
\clip (0,0.5) circle (1.3);
\coordinate (UL) at (-0.7,1);
\coordinate (UR) at (0.7,1);
\coordinate (B) at (0,-0.35);
\coordinate (ML) at ($ (UL) ! 0.5 ! (B)$ );
\coordinate (MR) at ($ (UR) ! 0.5 ! (B)$ );

\newcommand{\drawline}[2]
{
\draw ($ #1 ! -0.5 ! #2 $) -- ($ #1 ! 1.5 ! #2 $);
}

\drawline{(UL)}{(UR)}
\drawline{(UL)}{(B)}
\drawline{(UR)}{(B)}
\drawline{(UR)}{(ML)}
\drawline{(UL)}{(MR)}
\end{scope}
\node at ($ (UL) ! -0.5 ! (UR) $) {$1$};
\node at ($ (UL) ! 1.7 ! (MR) $) {$2$};
\node at ($ (UL) ! -0.5 ! (B) $) {$4$};
\node at ($ (UR) ! -0.5 ! (B) $) {$5$};
\node at ($ (UR) ! 1.7 ! (ML) $) {$3$};
\end{tikzpicture}
$}  
\]
where the matroid diagram encodes the flats as collinear subsets, and
the arrangement $\A(V)$ is drawn in a suitable affine chart of $\P^2$.
Here, $n=5$, $d=3$, and the linear functionals are $\set{u,v,w,u-v,u-w}$.
\end{example}
\begin{example}\label{ex:braid}
For $d\geq2$, let $V=V(A_d)=\C^{d+1}/\C e_{[d+1]}$, where
$e_{[d+1]}=(1,1,\ldots,1)$, and let
$f_{ij}=z_i-z_j$ for $1\leq i<j\leq d+1$.  Then $f\colon V\to \C^n$ is a
linear inclusion, where $n={d\choose 2}$, and $\M(V)$ is the matroid
of the complete graph, $K_{d+1}$.  Moreover,
\[
U(\A)=\set{z\in V\colon z_i\neq z_j\text{~for $i\neq j$}}
\]
is the complement of the reflecting hyperplanes of the $A_d$ root system,
also known as the (complex) {\em braid arrangement}.  One also writes $U(\A)$ as
$F(\C,d+1)$, the configuration space of $d+1$ (distinct, labelled) points
in $\C$.

The flats $X$ of $\A$ are indexed by partitions $\pi_X$ of
the set $[d+1]$, by putting indices $i$ and $j$ in the same block of $\pi_X$
if and only if $z_i=z_j$ on $X$.  Then $X\leq Y$ if and only if
$\pi_X$ refines $\pi_Y$,
and the rank of $X$ equals $d$ minus the number of blocks in $\pi_X$.
A flat is irreducible if and only if $\pi_X$ has exactly one block of size
greater than one.
\end{example}

\begin{example}\label{ex:dual}
If $\M=\M(V)$ is linear, then the dual matroid $\M^*=\M(V^\perp)$, where
$V^\perp$ denotes the orthogonal complement of $V$ in the dual vector
space $(\k^n)^*$.
\end{example}

\begin{remark}\label{rem:grass}
It can be useful to think of $V$ as a point in the Grassmannian $\Gr_{d,n}$,
which embeds in $\P(\bigwedge^d \k^n)$ via the Pl\"ucker embedding.
Homogeneous coordinates for the latter space are indexed by subsets
$I\subseteq[n]$ of size $d$: let $x_I=x_{i_1}\wedge\cdots\wedge x_{i_d}$,
where $I=\set{i_1,\ldots,i_d}$ and $i_1<\ldots<i_d$.  Translating our
remarks above into this notation, we see the $I$th coordinate of $V$ is nonzero
if and only if $I$ is independent in $\M(V)$.
\end{remark}

\subsection{Linear restrictions and sums}\label{ss:lin_restrs}
A linear matroid decomposes as a matroid
sum if and only if the vector space $V$ has a splitting which is
compatible with the coordinates in $\k^n$.  More precisely,
for any subset $S\subseteq[n]$, let
\[
\k^S=\set{x\in\k^n\colon x_i=0\text{~for $i\not\in S$}}
\]
denote the coordinate subspace.
If the matroid $\M(V)=\M_1\oplus \M_2$ where $\M_i$ is a matroid
on $E_i$, for $i=1,2$, it is not hard to check that $\M_i=\M(V_i)$,
where $V\cong V_1\oplus V_2$, and $V_i=V\cap \k^{E_i}$.  Conversely,
suppose $V_i\subseteq \k^{E_i}$ for $i=1,2$, where $[n]=E_1\dot\cup E_2$
is a partition with nonempty parts.  Let $V=V_1\oplus V_2$: then
$\M(V)=\M(V_1)\oplus \M(V_2)$.

Along the same lines, if $X\subseteq[n]$ is a flat of $\M=\M(V)$,
let $V_X$ denote the image of $V$ under the
coordinate projection map $\k^n\to\k^X$.   Then $V_X$ is a linear
quotient of $V$, and a realization of $\M|X$ in $\k^X$.

\subsection{A torus action}\label{ss:GM}

It should be clear that multiplying each coordinate map $f_i$ by a
nonzero scalar does not change the matroid or set of hyperplanes $\A$.
That is, let $T^n=(\k^\times)^n$
denote the algebraic torus of rank $n$, which acts on $\k^n$ by
(left) multiplication: then $\M(t\cdot V)=\M(V)$ for all $t\in T^n$.

Of course, if the coordinates of $t\in T^n$ are all equal,
$t$ acts on $\k^n$ by scalar multiplication, and $t\cdot V=V$.  So we
should instead consider the quotient torus $\PT^{n-1}:=T^n/\k^\times$
by the diagonal one-parameter subgroup.  In other words,
$T^n$ acts on the Grassmannian $\Gr_{d,n}$ via its
action on $\k^n$, and the action factors through $\PT^{n-1}$.

In this language, the matroid structure is constant on each torus orbit
$\PT^{n-1} \cdot V\subseteq \Gr_{d,n}$.
The hyperplane arrangements $\A(V)$ at different
points in the orbit have different ambient spaces, but they are linearly
isomorphic.
It remains to consider stabilizers of the torus action on $\Gr_{d,n}$.
It turns out that $\PT^{n-1}$ acts freely on $V$ when $\M(V)$ is connected:

\begin{proposition}\label{prop:Gr_orbit}
If $\M(V)$ has $\kappa$ connected components, then
\[
\stab_{T^n}(V)\cong(\k^\times)^\kappa.
\]
\end{proposition}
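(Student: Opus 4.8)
The plan is to identify the stabilizer $\stab_{T^n}(V)$ explicitly by reducing to the connected case and then assembling the answer over the connected components. First I would observe that the statement behaves well under matroid sums: if $\M(V) = \M(V_1)\oplus\M(V_2)$ with $V = V_1 \oplus V_2$ and $V_i \subseteq \k^{E_i}$ (as recalled in \S\ref{ss:lin_restrs}), then a torus element $t = (t_1,t_2) \in T^{E_1}\times T^{E_2}$ stabilizes $V$ if and only if $t_i$ stabilizes $V_i$ for each $i$. Indeed, $t\cdot V = (t_1\cdot V_1)\oplus(t_2\cdot V_2)$, and the two summands sit in complementary coordinate subspaces, so equality of subspaces can be checked factor by factor. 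Iterating over all $\kappa$ connected components, we get $\stab_{T^n}(V) \cong \prod_{c=1}^{\kappa} \stab_{T^{E_c}}(V_c)$, which reduces the problem to showing $\stab_{T^n}(V) \cong \k^\times$ (the diagonal subgroup) when $\M(V)$ is connected.

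For the connected case, the containment $\k^\times \hookrightarrow \stab_{T^n}(V)$ is immediate, since the diagonal acts by scalar multiplication. For the reverse inclusion, suppose $t = (t_1,\dots,t_n) \in T^n$ satisfies $t\cdot V = V$; I want to conclude all $t_i$ are equal. The cleanest route is to work with the linear functionals $f_i \in V^*$. The condition $t\cdot V = V$ means that the automorphism of $\k^n$ given by $x \mapsto (t_1 x_1, \dots, t_n x_n)$ carries $V$ to itself; dualizing, there is a linear automorphism $\phi$ of $V^*$ with $\phi(f_i) = t_i f_i$ for each $i$ — that is, each $f_i$ is an eigenvector of $\phi$ with eigenvalue $t_i$. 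Now I would use connectedness: I claim that for a connected matroid, whenever $i,j \in [n]$ there is a "short circuit" witnessing a linear dependence among $\{f_i, f_j\}$ and finitely many other $f_k$'s that forces $t_i = t_j$. Concretely, if $\{i,j\}$ is contained in a circuit $C$ of $\M(V)$, then $\sum_{k\in C} c_k f_k = 0$ for scalars $c_k$ all nonzero (circuits give dependencies with full support); applying $\phi$ gives $\sum_{k\in C} c_k t_k f_k = 0$, and subtracting $t_j$ times the first relation yields $\sum_{k \in C}c_k(t_k - t_j) f_k = 0$, a dependence supported on a proper subset of the circuit $C$ — hence the zero dependence, so $t_k = t_j$ for all $k \in C$, in particular $t_i = t_j$.

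It remains to propagate this across all of $[n]$. Here I would invoke the standard fact that a matroid is connected if and only if any two elements lie in a common circuit (this is one of the classical characterizations of connectivity; see \cite[\S4.1]{Oxbook}), so the relation "$t_i = t_j$" holds for every pair, giving $t \in \k^\times$. I expect the main obstacle to be a clean handling of the eigenvector dualization — making precise that $t \cdot V = V$ translates into the existence of $\phi$ with the stated eigenvalue property, and checking this is an \emph{isomorphism} onto the claimed group rather than merely a set bijection. One should verify the map $\stab_{T^n}(V) \to (\k^\times)^\kappa$ constructed above is an algebraic group isomorphism, but that is routine once the underlying sets are matched: the inverse sends $(\lambda_1,\dots,\lambda_\kappa)$ to the torus element equal to $\lambda_c$ on the coordinates $E_c$, and both directions are evidently morphisms. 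Alternatively, one can avoid duality entirely by phrasing everything in terms of the Plücker coordinates of $V$ (Remark~\ref{rem:grass}): $t$ fixes $[V] \in \Gr_{d,n}$ iff $t$ scales the Plücker vector, iff $t^B := \prod_{i\in B} t_i$ is independent of the basis $B$; basis exchange then shows all $t_i$ agree on each connected component. I would present whichever of these is shorter.
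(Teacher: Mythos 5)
Your proof is correct, but it takes a genuinely different route from the paper's. The paper works directly on $V$: given $t\in\stab_{T^n}(V)$, the eigenspaces of $t$ as an operator on $\k^n$ are coordinate subspaces $\k^{F_1},\ldots,\k^{F_l}$, and since $V$ is invariant under a diagonalizable operator, $V = \bigoplus_j (V\cap\k^{F_j})$; this exhibits a decomposition of $\M(V)$ compatible with the coordinates, so the connected-component partition $\{E_i\}$ must refine the eigenvalue partition $\{F_j\}$, giving directly that $\stab_{T^n}(V)=\set{t\colon t_i=t_j\text{ whenever }i,j\text{ lie in the same component}}$. This is a one-step argument and requires no reduction to the connected case. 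Your approach instead passes to $V^*$, observes that each $f_i$ is an eigenvector of the induced automorphism $\phi=(t|_V)^*$ with eigenvalue $t_i$, reduces to the connected case, and then invokes the classical characterization that a matroid is connected iff every pair of elements lies in a common circuit, together with a pretty "shorten the dependence" trick on circuits to force all eigenvalues equal. Both are sound; what you gain is a proof whose combinatorial content is made explicit through circuits, while what the paper's argument buys is brevity and self-containedness (it never needs the circuit criterion, only that loops and non-trivial separations both produce direct sum decompositions). Your Plücker alternative at the end also works and is closest in spirit to the Grassmannian framing of \S\ref{ss:GM}, though it needs the basis-exchange graph of a connected matroid to be connected, which is another nontrivial fact. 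One small point worth making explicit if you write this up: when a connected component is a single loop $\{e\}$, the corresponding $V_e=0$ and $\stab_{T^{\{e\}}}(0)=\k^\times$ — this case falls outside the circuit argument but is trivially consistent with the claimed count, and your product decomposition handles it automatically.
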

\begin{proof}
Suppose that $[n]=E_1\dot\cup\cdots\dot\cup E_\kappa$ is the decomposition into
connected components.  We claim
\begin{equation}\label{eq:stabV}
\stab_{T^n}(V)=\set{t\in T^n\colon t_i=t_j\hbox{~provided $i,j\in E_k$ for
some $k$}}.
\end{equation}
Since $V=V_1\oplus\cdots\oplus V_\kappa$ where $V_i=V\cap \k^{E_i}$ for
$1\leq i\leq \kappa$, scalar multiplication on each factor shows that
the right-hand side of \eqref{eq:stabV} is included in the left.
To show the other inclusion, suppose $t\in\stab_{T^n}(V)$.  The
eigenspaces of $t$ as an endomorphism of $\k^n$ are simply coordinate
subspaces, indexed by the partition of $[n]$ into subsets $F_1,\ldots,
F_l$ on which the coordinates of $t$ are constant (and pairwise
distinct).  By hypothesis, the action of $t$ restricts to $V$, so
\[
V=(V\cap \k^{F_1})\oplus\cdots\oplus (V\cap \k^{F_l}).
\]
It follows that the partition $E_1,\ldots,E_\kappa$ refines this one,
and $t$ is contained in the right-hand side of \eqref{eq:stabV}.
\end{proof}
In particular, if $\M(V)$ is connected, then the orbit $T^n\cdot V
\cong \PT^{n-1}$.  This provides an example of a torus torsor
in the Grassmannian.

\subsection{Arrangement complements}\label{ss:U}
For any hyperplane arrangement $\A$ in $V$, let
$U(\A)=V-\bigcup_i H_i$.  This is both the complement of the hypersurface
$f^{-1}(0)$ as well as an irreducible, closed subvariety of the torus
$T^n$, since $U(\A)=V\cap T^n$.  It is useful to keep in mind both
points of view.  The torus is, itself,
the complement of the Boolean arrangement $\CC^n$.  The space $U(\A)$ is a
central object of study in the theory of hyperplane arrangements,
particularly when $\k=\C$: we refer to the forthcoming book \cite{CDFSSTY}.

Let $\P U(\A):=U(\A)/\k^\times$, where $\k^\times$ is the diagonal subgroup
again.  This is a subvariety of the projective space $\P V$:
as above, $\P U(\A)=\P V\cap \PT^{n-1}$.
The quotient torus splits as a coordinate subtorus of $T^n$.
For $1\leq i\leq n$, let $T^{n-1}_i=\set{t\in T\colon t_i=1}$, and consider the
isomorphism of groups
$c_i\colon \PT^{n-1}\to T^{n-1}_i$, given by $c_i(t)=t_i^{-1}\cdot t$.
By restricting $c_i$ to $\P U(\A)$,
we see that $\P U(\A)$ is isomorphic to the subvariety
of $U(\A)$ on which $f_i=1$.
\begin{exercise}
Check that
$U(\A)\cong \P U(\A)\times \k^\times$.
\end{exercise}

\begin{remark}\label{rem:veryaffine}
An irreducible, closed subvariety of an algebraic torus is said to be
{\em very affine}.  Another very affine variety associated with a
hyperplane arrangement is given as follows.  Given a lattice vector
$m\in \Z^n$ with $\gcd\set{m_i\colon i\in [n]}=1$, form the subgroup
\[
T^m = \set{t\in T\colon t_1^{m_1}t_2^{m_2}\cdots t_n^{m_n}=1},
\]
and let $F(\A,m)=V\cap T^m$.  In general, this is a level set of a
{\em master function}
\[
\prod_{i=1}^n f_i^{m_i}\colon U(\A)\to \k^\times.
\]
See, for example, \cite{Var11}, for more details.
If $m_i>0$ for each $1\leq i\leq n$, the variety $F(\A,\m)$ is an (unreduced)
Milnor fibre of $\A$ with given multiplicities.  If each $m_i=1$, it
coincides with the usual, global Milnor fibre of the hypersurface.
These varieties are qualitatively quite different
from the hyperplane complements, and their topology is more subtle: see the
paper by Suciu in this volume~\cite{Su13}.  It would be interesting to
know if there is anything special about
the topology of intersections of linear spaces $V\subseteq\k^n$
and general subtori of $T^n$.
\end{remark}

\section{Toric varieties}\label{sec:toric}
For background on this subject,
we recommend the excellent book by Cox, Little and Schenck
\cite{CLS11}.  In order to keep these notes self-contained,
we will give a brief outline of the role of polyhedral combinatorics
in the theory of toric varieties.
From now on, we will assume that $\k=\C$.

We recall that a (normal) toric
variety of dimension $n$ to be an irreducible, normal complex variety $X$ for
which
\begin{itemize}
\item $X$ contains a dense, open subvariety isomorphic to a complex torus
$T^n$;
\item the action of $T^n$ on itself extends to $X$, giving
an algebraic map $T^n\times X\to X$.
\end{itemize}
\subsection{Cones and orbits}
One foundational part of the theory is that toric varieties are stratified
by closures of torus orbits, and that this stratification determines the
toric variety's isomorphism type.  The orbits are described combinatorially
by means of a polyhedral fan, denoted $\Sigma_X$, and we recall
briefly the nature of this description.

The one-parameter subgroups of $T^n$ form an integer lattice,
$N:=\Hom(\C^\times,T^n)$, which is isomorphic to $\Z^n$.  Suppose a
toric variety $X$ containing $T^n$ is given.  Then, for each
$u\in N$, we check whether or not
the homomorphism $u\colon\C^\times\to T^n\subseteq
X$ can be extended continuously to a map $u\colon \C\to X$.  That is,
let
\[
\abs{\Sigma_{X,\Z}}=\big\{u\in N\colon\text{
$\lim_{t\to0} u(t)$ exists.}\big\}
\]
Impose an equivalence relation by letting,
for $u,v\in\abs{\Sigma_{X,\Z}}$,
\[
u\sim v\text{~if and only if~}\lim_{t\to0} u(t)=\lim_{t\to0} v(t).
\]
If $S\subseteq N$ is an equivalence class, let $N_\R=N\otimes_\Z \R$,
a real Euclidean space, and define $\sigma\subseteq N_\R$ by
\[
\sigma=\overline{\R_{\geq0}\, S},
\]
where $\overline{\vphantom{i}\;\cdot\;}$ denotes closure in the usual topology.
It turns out that each such $\sigma$ is a polyhedral cone.

Let $\Sigma_X$
denote the set of all cones, which is (by construction) in bijection with
the set of limit points of one-parameter subgroups.  The set of cones
$\Sigma_X$ is closed under intersection.  For each $\sigma\in \Sigma_X$,
let $O(\sigma)=T^n\cdot\lim_{t\to 0}u(t)$ denote the torus orbit coming
from any $u\in\abs{\Sigma_{X,\Z}}\cap\sigma$.  All orbits arise this way, and
the cones keep track of their incidence data: that is,
\[
O(\sigma)\subseteq\overline{O(\tau)}\text{~if and only if~}\tau\subseteq\sigma.
\]
\begin{example}\label{ex:affine}
Affine space $\C^n$ is a toric variety whose fan $\Sigma_{\C^n}$ consists of
cones $\sigma_S$, for all $S\subseteq[n]$, where
$\sigma_S=\R_{\geq0}\set{e_i\colon i\in S}$, and $\set{e_1,\ldots,e_n}$ are
the standard basis for $N=\Z^n$.
The cone $\sigma_\emptyset=\set{0}\subseteq N$
corresponds to the maximal torus orbit $T^n\subseteq\C^n$ in the dictionary
above, while the full orthant $\sigma_{[n]}$ corresponds to the $0$-dimensional
orbit $\set{0}\subseteq\C^n$.
\end{example}
\begin{example}\label{ex:Pn}
The usual embedding $\PT^{n-1}\subseteq\P^{n-1}$ makes projective space a toric
variety with $N=\Z^n/\Z e_{[n]}$.  The toric fan $\Sigma_{\P^{n-1}}$
consists of cones $\sigma_S\subseteq \R^n/\R e_{[n]}$, for
all $S\subsetneq [n]$, where $\sigma_S$ is defined as in
Example~\ref{ex:affine}.  In particular, the one-dimensional cones
$\sigma_{\set{i}}$ correspond to orbits of points $[x_1\colon\cdots\colon x_n]
\in\P^{n-1}$ with $x_i=0$, for $1\leq i\leq n$.
\end{example}

A cone $\sigma$ of dimension $d$ is {\em simplicial} if it is spanned by $d$
rays.  A necessary condition for a toric variety $X$
to be smooth is that each cone of the fan $\Sigma_X$ is simplicial.
A sufficient condition for $X$ to be smooth is that $\Sigma_X$ is simplicial,
and that each cone $\sigma$ is {\em unimodular}: that is, $\sigma$
is generated by lattice vectors that extend to a
basis of the lattice $N$.

\subsection{Projective toric varieties}
An important family of toric varieties is constructed in the following
way. Let $A$ be a $n\times r$ integer matrix, and
consider the group homomorphism between tori, $p_A\colon T^n\to T^r$,
given by
\[
p(t)=(\prod_{i=1}^n t_i^{A_{i1}},\prod_{i=1}^n t_i^{A_{i2}},\ldots,
\prod_{i=1}^n t_i^{A_{ir}}).
\]
Then $p_A$ is injective provided that $A$ is unimodular of full rank, and $p_A$
induces a map of quotient tori $\bar p_A\colon \PT^{n-1}\to\PT^{r-1}$
provided that the sum of the entries in any two columns of $A$ are
equal.  If both of these conditions are satisfied, we have an embedding
\[
\bar p_A\colon \PT^{n-1}\to \PT^{r-1}\subseteq\P^{r-1}:
\]
let $X_A$ denote the toric variety given by taking the closure of the image
of $\bar p_A$.

If one translates each column of $A$ by a fixed lattice vector to obtain
a matrix $A'$, then $\bar p_A=\bar p_{A'}$, so $X_A=X_{A'}$.
Let $P_A$ denote the convex hull of the columns of $A$, a polytope in $\Z^n$.
Choose a translation of the columns of $A$ so that the column
sums are zero, so that $P_A\subseteq M:=N^*\cong \Z^{n-1}$.

In fact, any toric variety $X$ which is equivariantly
embedded in projective space can be written as $X=X_A$ for some $A$,
so we will simply write $P_X$ for (the
translation equivalence class of) the polytope $P_A$.

The toric fan of $X_A$ is easy to describe: provided that $X_A$ is normal,
the fan is the (inner) normal
fan of $P_A$.  This is a complete fan: $\abs{\Sigma_{X,\Z}}=N$, and for
$u,v\in N$,
we have $u\sim v$ if and only if $u$ and $v$, regarded as linear functionals
on $M$, both achieve their minimum on the same subset of $P_A$.

With this in mind, recall the Gel$'$fand-MacPherson construction
from \S\ref{ss:GM} that embedded a torus in $\Gr_{d,n}$.  If a linear
matroid $\M(V)$ is connected, then the map $a_V\colon \PT^{n-1}\to
\Gr_{d,n}$ defined by $a_V(t)=t\cdot V$ is injective, by
Proposition~\ref{prop:Gr_orbit}.  So the closure
of the image is a toric variety,
\[
X_{\M}:=\overline{a_V(\PT^{n-1})}\subseteq \Gr_{d,n}.
\]
The Grassmannian is complete, so $X_{\M}$ is too.  In fact, using the
Pl\"ucker embedding, we can identify
$X_{\M}$ with the closure of the image $\PT^{n-1}$ in the projective space
$\P(\bigwedge^d \C^n)$, as in Remark~\ref{rem:grass}, so $X_{\M}$ is
a projective toric variety.  It has a particularly nice description:

\begin{theorem}[\cite{GGMS87}]\label{thm:GGMS}
The weight polytope of $X_{\M}$ is the matroid polytope, $P_{\M}$.
\end{theorem}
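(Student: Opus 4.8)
The plan is to unwind the definitions of the weight polytope and of $X_{\M}$, and to see that the two polytope constructions match vertex-by-vertex. Recall that $X_{\M}$ was defined as the closure of the image of $\bar p_A$ for a suitable matrix $A$, equivalently as the closure of $a_V(\PT^{n-1})$ inside $\P(\bigwedge^d\C^n)$ via the Pl\"ucker embedding. The Pl\"ucker coordinates are indexed by the $d$-subsets $I\subseteq[n]$, and the homogeneous coordinate $x_I$ of a point $t\cdot V$ is, up to a nonzero scalar depending on $V$ but not on $t$, equal to $\big(\prod_{i\in I} t_i\big)\cdot x_I(V)$. By Remark~\ref{rem:grass}, $x_I(V)\neq 0$ precisely when $I$ is independent in $\M(V)$, i.e.\ exactly when $I\in\B$ is a basis. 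So the character of $\PT^{n-1}$ by which the torus acts on the coordinate $x_I$ is $t\mapsto\prod_{i\in I}t_i$, that is, the character $e_I\in M$, and this coordinate is not identically zero on $a_V(\PT^{n-1})$ exactly when $I$ is a basis. First I would spell out this computation of the torus weights carefully, using Proposition~\ref{prop:Gr_orbit} (applicable since $\M$ is connected) to know that $a_V$ is an injective homomorphism so the image is genuinely a subtorus.

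Next I would invoke the general principle, recalled in \S\ref{sec:toric}, that a torus $T$ equivariantly embedded in a projective space $\P^{r-1}$ via characters $\chi_1,\dots,\chi_r$ has associated polytope equal to the convex hull of those $\chi_j\in M$ for which the $j$th coordinate of the image torus is not identically zero---this is the ``weight polytope'' $P_X$, and it is the polytope $P_A$ attached to the matrix $A$ whose columns are the relevant characters. In the situation at hand the ambient projective space is $\P(\bigwedge^d\C^n)$ with coordinates indexed by $d$-subsets $I$, the character attached to coordinate $x_I$ is $e_I$, and the coordinate is nonzero on $X_{\M}$ exactly for $I\in\B$. Hence the weight polytope of $X_{\M}$ is $\conv\set{e_I\colon I\in\B}=P_{\M}$, which is precisely the matroid polytope from \S\ref{ss:polytopes}. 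I would also remark that one should translate the $e_I$ so that column sums are equal (here they are automatically all equal to $d$), which is exactly the normalization made for the $p_A$-construction, so no adjustment is needed.

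The one genuine point requiring care---and the step I expect to be the main obstacle---is the claim that $X_{\M}$ is \emph{normal}, since the identification ``$\Sigma_X=$ normal fan of $P_X$'' and even the clean statement of the weight polytope were only asserted under normality in \S\ref{sec:toric}. Equivalently, one needs that the lattice points $\set{e_I\colon I\in\B}$ generate, over $\Z_{\geq 0}$, all lattice points of the cone over $P_{\M}$ at each vertex, i.e.\ that the matroid polytope is \emph{normal} as a lattice polytope (this is White's theorem, or one can cite that $X_{\M}$ is projectively normal). I would handle this by citing \cite{GGMS87} (and standard references on matroid polytopes) for normality of $P_{\M}$, or alternatively by phrasing the theorem purely at the level of the weight polytope $P_X$ as the convex hull of the occurring characters---a notion that makes sense for any equivariantly embedded projective $X$ regardless of normality---so that the normality discussion becomes a side remark rather than part of the proof. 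Everything else is the routine unwinding of the Pl\"ucker coordinates described above.
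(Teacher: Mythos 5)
Your proof is correct and essentially matches the paper's: both compute the action of the torus on the Pl\"ucker coordinates of $t\cdot V$, observe that the $I$th coordinate scales by the character $e_I$, note it is nonzero precisely when $I\in\B$, and conclude that the weight polytope is $\conv\set{e_I\colon I\in\B}=P_{\M}$ (the paper phrases the restriction to basis coordinates as a rational projection $\P(\bigwedge^d\C^n)\dashrightarrow\P(\C^{\B})$ that is regular and equivariant on the torus). Your worry about normality is not actually an obstacle for this statement---the paper defines the weight polytope $P_X$ as the convex hull of the occurring characters with no normality hypothesis, invoking White's normality result only afterwards (and separately) to get the bijection between orbit closures and faces of $P_{\M}$---but you correctly identified that this is how the issue is sidestepped.
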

\begin{proof}
We assume for simplicity that $\M(V)$ is connected, the general case
being similar.  Consider the composition of maps $b_V$,
\[
\xymatrix{
\PT^{n-1}\ar[r]^{a_V}\ar`d[r]`r[rrr]_{b_V} [rrr]
 & \Gr_{d,n}\ar[r] & \P(\bigwedge^d\C^n)\ar@{-->}[r] &
\P(\C^{\B}),
}
\]
where last rational map is projection onto those $x_I$ for which $I$
is a base of $\M(V)$.  Its restriction to the image of the torus is, in
fact, an injective, equivariant regular map, and we want to calculate
the weights of the embedding of $X_{\M}$ in $\P(\C^{\B})$.

Suppose $V=\row(A)$ for some $d\times n$ matrix $A$.  For $t\in\PT^{n-1}$,
a basis for
$t\cdot V$ is obtained by multiplying column $j$ of $A$ by $t_j$.  For
a set $I\subseteq [n]$ of size $d$, we write $\det(A_I)$ for the minor
on columns $I$, which is also the $I$th Pl\"ucker coordinate of $V$.
Then for $I\in\B$, the columns of $A$ are independent, so $\det(A_I)\neq0$,
and the torus action on the $I$th coordinate is given by
\[
b_V(t\cdot V)_I=\big(\prod_{j\in I}t_j\big)\cdot b_V(V)_I.
\]
Reading the exponents of this monomial,
we see that the weight vector for coordinate $I$ is the vector
$e_I$, and these are the vertices of the matroid polytope.
\end{proof}

By a result of White~\cite{Wh77}, the toric variety $X_{\M}$ is normal.
So the closures of torus orbits in $X_{\M}$ are in bijection with the
faces of the matroid polytope $P_{\M}$.  Since a face $F$ of $P_{\M}$
is the matroid polytope of a matroid $\M_F$, the closure of the
corresponding orbit is another toric variety, $X_{\M_F}$.
We illustrate with an example.
\begin{example}\label{ex:uniform_polytope}
Continuing Example~\ref{ex:uniform}, consider the arrangement of four
lines in the plane, $\M(V)\cong U_{2,4}$.
If we choose $f\colon\C^2\to\C^4$, to be $f=(z,w,z-w,z+w)$,
then for $t\in\PT^3$, we have
\[
t\cdot V = \row
\begin{pmatrix}
t_1 & 0 & t_3 & t_4\\
0 & t_2 & -t_3 & t_4
\end{pmatrix},
\]
whose image in $\P^5$ is
\[
[t_1t_2\colon -t_1t_3\colon -t_2t_3\colon t_1t_4\colon -t_2t_4\colon 2t_3t_4].
\]
Then the matrix of weights, with the columns ordered in this way, is
\[
A=\begin{pmatrix}
1 & 1 & 0 & 1 & 0 & 0\\
1 & 0 & 1 & 0 & 1 & 0\\
0 & 1 & 1 & 0 & 0 & 1\\
0 & 0 & 0 & 1 & 1 & 1
\end{pmatrix},
\]
and the convex hull of the columns is the octahedral matroid polytope
of Figure~\ref{fig:octahedron}.
The normal fan of an octahedron is the fan in $N=\Z^4/\Z e_{[4]}$
obtained by taking cones over each face of the polar polytope, the cube.
The fan is not simplicial, so the toric variety $X_{\M}$ is not smooth.

The coordinate $x_1$ takes its minimum on the facet $F=\conv\set{e_{23},e_{24},
e_{34}}$, so the vector $u=(1,0,0,0)\in N$ spans a ray of the fan.
We can work out the corresponding codimension-$1$ torus orbit
by computing $\lim_{t\to 0} u(t)$, which is
\begin{equation}\label{eq:limit}
\lim_{t\to 0}\row
\begin{pmatrix}
t^1 & 0 & t^0 & t^0\\
0 & t^0 & -t^0 & t^0
\end{pmatrix}
=
\row\begin{pmatrix}
0 & 1 & -1 & 1\\
0 & 0 & 1 & 1\\
\end{pmatrix}.
\end{equation}
The matroid of this limit subspace has a loop, the element $1$.  By
Theorem~\ref{thm:flacet0}, the facet $F$ must lie in the boundary of the
simplex $2\cdot\Delta^3$, which it does.  (See Figure~\ref{fig:octahedron}).
\end{example}

\begin{example}\label{ex:delA3_again}
Recall the hyperplane arrangement from Example~\ref{ex:delA3} with $d=3$ and
$n=5$.  Its matroid polytope $P_{\M}$
is $4$-dimensional, and $\abs{\B}=8$.  Calculating by hand (or with help from
Macaulay~2~\cite[Polyhedra package]{M2}), one finds $P_{\M}$ has $18$ edges,
$17$ $3$-faces, and $7$ facets: see Figure~\ref{fig:loopy}.
One facet $F$ is contained in
$\partial(3\cdot\Delta^4)$
and has vertices $\set{e_{234},e_{235},e_{245},e_{345}}$.
Again, $1$ is a loop in $\M_F$, and $-e_1$ is an outer normal vector.
Four facets, normal to $e_i$ for $i=2,3,4,5$ are square-based pyramids.
The two facets normal to $e_{124}$ and $e_{135}$ are triangular prisms.
The vector $e_1$ takes its maximum
on their intersection, which is a square ($2$-dimensional) face, reflecting
the fact that the contraction $\M/\set{1}$ is not connected: see
Theorem~\ref{thm:flacet0}.
\end{example}

We conclude the section with another example of how constructions in 
convex geometry are reflected by toric varieties.
Let $X_1$ and $X_2$ be $n$-dimensional projective toric varieties,
and $i_j\colon T^n\to X_j$ the inclusion of their maximal torus, for $j=1,2$.
Consider the diagonal map $d\colon T^n\to X_1\times X_2$ given by
$d(t)=(d_1(t),d_2(t))$,
and let $X=\overline{d(T^n)}\subseteq X_1\times X_2$.
\begin{proposition}[Prop.~8.1.4, \cite{GKZbook}]\label{prop:minkowski}
The space $X$ is also a projective toric variety, and
$P_X=P_{X_1}+P_{X_2}$,
where ``$+$'' denotes the Minkowski sum of polytopes.
\end{proposition}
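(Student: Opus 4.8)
The plan is to reduce everything to the combinatorics already in place, namely the correspondence between equivariantly embedded projective toric varieties and their weight polytopes, together with the description of the toric fan as the normal fan of that polytope. First I would describe $X_1$ and $X_2$ concretely: by the discussion preceding the statement, we may write $X_j = X_{A_j}$ for integer matrices $A_j$ with equal column sums, so that $d_j = \bar p_{A_j}\colon \PT^{n-1}\to \P^{r_j-1}$ and $P_{X_j} = P_{A_j}$ (up to translation). The diagonal map $d$ followed by the Segre embedding $X_1\times X_2 \into \P^{r_1-1}\times\P^{r_2-1}\into \P^{r_1 r_2 - 1}$ is then itself a monomial map $\bar p_A$ of the form considered earlier: reading off exponents, the columns of the relevant matrix $A$ are exactly the sums $a + b$, where $a$ ranges over columns of $A_1$ and $b$ over columns of $A_2$. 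This already shows that $X$ is an equivariantly embedded projective toric variety (the column sums of $A$ are again all equal, being the sum of those of $A_1$ and $A_2$), and that its weight polytope is the convex hull of $\{a+b\}$.

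The next step is the purely convex-geometric identity $\conv\{a+b : a\in\mathrm{vert}(P_{A_1}),\, b\in\mathrm{vert}(P_{A_2})\} = P_{A_1}+P_{A_2}$. This is standard: the Minkowski sum of two polytopes is the convex hull of the pairwise sums of their vertices, because a linear functional $u$ is maximized on $P_1+P_2$ exactly where it is maximized on each summand separately, so the vertices of $P_1+P_2$ are among the vectors $a+b$ with $a\in\mathrm{vert}(P_1)$, $b\in\mathrm{vert}(P_2)$. Hence $P_X = P_{X_1}+P_{X_2}$ as claimed, once we know $P_X$ really is the weight polytope. For the latter one repeats verbatim the weight computation in the proof of Theorem~\ref{thm:GGMS}: the torus $T^n$ acts on the homogeneous coordinate indexed by a pair $(a,b)$ through the character $t\mapsto t^{a+b}$, so the weight vector of that coordinate is $a+b$, and the weight polytope is the convex hull of the nonzero ones.

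The one genuine subtlety — and I expect it to be the main obstacle — is normality: the identification of the toric fan of an equivariantly embedded $X_A$ with the normal fan of $P_A$ was stated earlier only "provided that $X_A$ is normal," so to conclude $P_X = P_{X_1}+P_{X_2}$ at the level of the structure promised by the proposition (rather than merely the weight polytope of one particular projective embedding) one should know that the normalization does not change the polytope, or invoke a result guaranteeing it. The cleanest route is to cite the relevant statement in \cite{GKZbook}, whose Proposition~8.1.4 is exactly this assertion and handles the normality bookkeeping; alternatively, since taking the normal fan of a polytope is insensitive to whether $P_A$ is a normal (i.e. saturated) lattice polytope, one can simply define $P_X$ as the translation class of the weight polytope as was done for $X_{\M}$ and observe that the Minkowski-sum identity above is then immediate. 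I would present the short monomial-map argument together with the vertex-sum lemma, and defer the normality point to the cited reference.
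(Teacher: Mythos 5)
Your argument is correct. The paper itself gives no proof of this proposition — it is cited directly from \cite{GKZbook} (Prop.~8.1.4) — so there is no in-paper argument to compare against, but the Segre-embedding reduction you describe is the standard route and is essentially what lies behind the cited result. One small thing worth making explicit: the composed monomial map $\bar p_A$ into $\P^{r_1 r_2 - 1}$ is injective because projection onto the first factor already recovers the injective $\bar p_{A_1}$; this is needed to place $X$ within the $X_A$ framework. You also correctly flag the normality issue, and your resolution is the right one: the translation class $P_X$ is read off directly from the exponent matrix of any equivariant projective embedding, so the identity $P_X = P_{X_1} + P_{X_2}$ follows from the Segre computation together with the elementary fact that the Minkowski sum of two polytopes is the convex hull of the pairwise vertex sums, with no normality hypothesis needed.
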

\begin{exercise}
Define a map $\PT^n\to \P^n\times\P^n$ by
$t\mapsto(t,t^{-1})$, and let $X_n\subseteq\P^n\times\P^n$ denote the closure
of its image.  Check that $X_n$ is a toric variety with
$P_{X_n}=\Delta^n+(-\Delta^n)$,
where $\Delta^n\subseteq \Z^{n+1}$ denotes the standard simplex,
and $-\Delta^n$ its reflection through the origin.  Show that $X_2$ is
isomorphic to a blowup of $\P^2$ at three points, so $X_2$ is smooth,
but that $X_n$ is not smooth for $n\geq3$.
\end{exercise}
\section{Tropical aspects}\label{sec:tropical}
Our third ingredient is a bit of tropical geometry.  The reader
should see \cite{Mi06} or \cite{RST05} for an overview,
and \cite{Ka09} for an advanced introduction to the subject.  The
first ``tropical aspect'' here involves term orders for subvarieties
of the torus.
\subsection{Initial ideals}
Let $I$ be an ideal either in the polynomial ring
$S := \C[x_1,\ldots,x_n]$.  Suppose $u$
is a linear functional (regarded as an element of $\R^n$).
Then $u$ induces an order on monomials.  For $a\in\Z^n$, write
$x^a:=x_1^{a_1}\cdots x_n^{a_n}$, and put $x^a\prec_u x^b$ if and only if
$u(a)<u(b)$.  Then the {\em initial ideal} $\In_u(I)$ is obtained from $I$ by
taking the $\prec_u$-maximal summand of each element of $I$ (allowing ties.)
If $I$ is a homogeneous ideal, then
we may take $u\in N_\R:=\R^n/\R e_{[n]}$.  Impose an equivalence relation on
$N$ by putting $u\sim v$ if and only if
$\In_u(I)=\In_v(I)$.  This is in fact a polyhedral fan, called the
{\em Gr\"obner fan} of $I$: see, for example, \cite{Stu96} for details.

The special case that interests us here is when $I=I(V)$, the defining
ideal of a linear variety $V$.  Then $I$ is generated in degree $1$ by
$V^\perp$, and the
linear elements: i.e., $I$ is the defining ideal of a linear
subvariety $V\subseteq\C^n$ (or, rather, its restriction to $T^n$.)
In that case, $I=(V^\perp)$, and Sturmfels notes in \cite{Stu02} that
the Gr\"obner fan is simply the (outer) normal fan to the matroid polytope.

To see why this should be the case, the key observation is that we can
compute initial ideals using limits of one-parameter subgroups.  Based on
the explicit calculation in Example~\ref{ex:uniform_polytope}, suppose
that $I=(V^\perp)$, and $u\in N$ is some lattice direction, which
again we regard as a one-parameter subgroup of the torus by
letting $u(t)=(t^{u_1},\ldots,t^{u_n})$.  Then, for any vector
$v=\sum_{i=1}^n c_i x_i\in V^\perp$, we have
\[
u(t^{-1})\cdot v= \sum_{i=1}^n c_i t^{-u_i}x_i.
\]
Since $u$ is only defined up to a multiple of the vector $e_{[n]}$, we may
choose our representative so that $-u_i\geq0$ for all $i$, and the
largest coordinate(s) are equal to zero.  Then computing the limit as
$t\to 0$ leaves us with only the $\prec_u$-initial terms of $v$, so
$\lim_{t\to 0}u(t^{-1})\cdot V^\perp=(V_u)^\perp$,
denoting the linear space defined by $\In_u(V)$ by $V_u$.  Since
$(t\cdot V)^\perp=
t^{-1}\cdot V^\perp$ (see Exercise~\ref{ex:inverses}), we see also that
$\lim_{t\to 0}u(t)\cdot V=V_u$.

Using Theorem~\ref{thm:GGMS}, then, $u\sim v$ in the inner normal fan of
$P_{\M}$
if and only if
\begin{eqnarray*}
\lim_{t\to 0}u(t)\cdot V=\lim_{t\to  0}v(t)\cdot V
&\Longleftrightarrow & V_u=V_v,
\end{eqnarray*}
if and only if $u\sim v$ in the Gr\"obner fan of $I$.

Since the cones of a normal fan are in bijection with polytope faces,
the set of all the initial ideals is indexed by the faces of the
matroid polytope.
\begin{proposition}\label{prop:Grob}
If $\M=\M(V)$ and $P_{\M_u}$ is any face of the polytope $P_{\M}$, then
the degeneration matroid $\M_u$ equals
$\M(V_u)$, where $V_u$ is the linear space defined by $\In_u(I(V))$.
\end{proposition}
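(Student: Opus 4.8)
The plan is to check that the matroids $\M(V_u)$ and $\M_u$ have the same collection of bases, which pins them down, and the argument is a direct unwinding of the proof of Theorem~\ref{thm:GGMS}. First I would recall the two descriptions in play: by Remark~\ref{rem:grass}, for a $d$-element subset $I\subseteq[n]$ the $I$th Pl\"ucker coordinate $x_I(W)$ of a point $W\in\Gr_{d,n}$ is nonzero exactly when $I\in\B(\M(W))$, so in particular $I\in\B:=\B(\M(V))$ iff $x_I(V)\neq0$; on the other hand $P_{\M}=\conv\set{e_B\colon B\in\B}$ and, by definition, $\M_u$ is the matroid whose bases index the vertices of the face of $P_{\M}$ on which $u$ attains its minimum. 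Thus, writing $m:=\min\set{\sum_{j\in B}u_j\colon B\in\B}$, we have $B\in\B(\M_u)$ iff $B\in\B$ and $\sum_{j\in B}u_j=m$.

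The heart of the argument is then to compute the Pl\"ucker coordinates of $V_u$. By the discussion immediately preceding the proposition, $V_u=\lim_{t\to0}u(t)\cdot V$ inside $\Gr_{d,n}$, the limit existing because the Grassmannian is complete. Exactly as in the proof of Theorem~\ref{thm:GGMS}, if $V=\row(A)$ for a $d\times n$ matrix $A$, then multiplying column $j$ of $A$ by $t_j$ produces a matrix spanning $t\cdot V$, so its minor on a column set $I$ gives $x_I(t\cdot V)=\big(\prod_{j\in I}t_j\big)\,x_I(V)$; specializing to the one-parameter subgroup $u$ yields $x_I(u(t)\cdot V)=t^{\sum_{j\in I}u_j}x_I(V)$ for every $d$-subset $I$. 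The coordinates with $I\notin\B$ vanish identically in $t$, so the limit in homogeneous coordinates is obtained by dividing through by $t^{m}$: the coordinate $x_I(V_u)$ is nonzero precisely when $I\in\B$ and $\sum_{j\in I}u_j=m$. (In particular the limit really is a point of $\Gr_{d,n}$, since the minimizing coordinates survive.) Hence $\B(\M(V_u))=\set{I\in\B\colon\sum_{j\in I}u_j=m}=\B(\M_u)$, and since a matroid is determined by its bases, $\M(V_u)=\M_u$.

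The computation above is routine; the only point demanding care is the bookkeeping of conventions --- that $\In_u$ retains the $\prec_u$-\emph{maximal} terms of elements of $V^\perp$ while $\M_u$ comes from the face on which $u$ is \emph{minimized}, equivalently $u(t)$ versus $u(t^{-1})$ --- but this matching is precisely what the paragraph preceding the proposition was arranged to establish, so it can simply be quoted. I would also note that connectedness of $\M(V)$ plays no role here, since the column-scaling identity used above does not use it; alternatively, one could phrase the whole argument toric-geometrically, observing that $V_u$ lies in the torus orbit of $X_{\M}$ corresponding to the face $P_{\M_u}$, so $\overline{\PT^{n-1}\cdot V_u}=X_{\M_u}$, and then comparing weight polytopes via Theorem~\ref{thm:GGMS} applied to $V_u$.
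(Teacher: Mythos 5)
Your proof is correct and takes essentially the same route the paper has in mind. The paper supplies no separate proof environment for Proposition~\ref{prop:Grob}, treating it as an immediate consequence of the preceding discussion (which establishes $\lim_{t\to0}u(t)\cdot V=V_u$) together with Theorem~\ref{thm:GGMS}; your explicit Pl\"ucker-coordinate computation of the limit and the matching of $\B(\M(V_u))$ with the bases minimizing $u$ simply makes explicit the last step that the paper leaves to the reader.
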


\subsection{Bergman fans}
Now let us repeat the construction above where, this time, $I$ is an ideal
in the coordinate ring of the torus, $\C[T^n]=\C[x_1^{\pm1},\ldots,x_n^{\pm1}]$.
Assume $I$ is a proper ideal generated in degree $1$.  Then,
from \S\ref{ss:U}, the  zero locus of such an ideal is a
hyperplane complement $U(\A)$ in a linear space $V$, where the matroid
$\M(V)$ contains no loops.  (Conversely, if $i$ is a loop in a linear
matroid $\M(V)$, then $I(V)$ contains a unit, the variable $x_i$,
 which is to say
$V\cap T^n=\emptyset$.)  So by Proposition~\ref{prop:Grob}, {\em the faces of
the matroid polytope corresponding loop-free degenerations are in
bijection with those initial ideals $\In_u(I(V))$ that define
(nonempty) hyperplane complements.}  This motivates the following
definition.

\begin{definition}\label{def:bergman}
For a subvariety $X\subseteq T^n$ defined by an ideal $I$,
its {\em Bergman fan} $\Be(X)$ is the set of
cones $\sigma_u$ in the Gr\"obner fan for which $\In_u(I)$ does not contain a
monomial.
\end{definition}
In the special case where $X=V\cap T^n$, we saw above that the Bergman fan 
depends only on the matroid $\M=\M(V)$, and accordingly we will denote it by
$\Be(\M)$.  This is the set of cones $\sigma_u$
of $\Sigma_{\M}$ for which the degeneration matroid $\M_u$
does not contain a loop.  Theorem~\ref{thm:flacet0} gives
 another characterization, as well.  Recall $\M_u$
contains a loop if and only if the face $u$ lies in the boundary of the standard
simplex, so $\Be(\M)$ consists of (outer) normal cones to the faces of
$\partial(d\cdot\Delta^{n-1})-\partial P_{\M}$.  In order
to visualize it conveniently, we recall that the (outer) normal
fan of a polytope consists of cones over the faces of the polar polytope
$P^*_{\M}$.  That is, $\Be(\M)$ is a cone over a polyhedral subcomplex
of $P^*_{\M}$, which we denote $B_{\M}$.  This is the
{\em Bergman complex} of $\M$.

\begin{example}
Continuing Example~\ref{ex:uniform_polytope}, the defining ideal of $V$ is
\[
I=I(V)=(V^\perp)=(-x_1+x_2+x_3,-x_1-x_2+x_4).
\]
Taking $u=(1,0,0,0)$, for example, and rewriting the subspaces in
\eqref{eq:limit} as kernels, we find $\In_u(I)=(x_1,2x_2+x_3-x_4)$.
By inspection the only vectors $u$ for which $\In_u(I)$ does not
contain a variable are the nonnegative multiples of $(0,1,1,1)$, $(1,0,1,1)$,
$(1,1,0,1)$, and $(1,1,1,0)$, and the Bergman fan consists of the
bold rays in Figure~\ref{fig:octahedron_b}.
\end{example}

\begin{example}
For the matroid of Examples \ref{ex:delA3} and \ref{ex:delA3_again},
the subcomplex of $\partial P_{\M}$ consisting of faces in
$\partial(3\cdot\Delta^4)$ is shown in Figure~\ref{fig:loopy}.
On the other hand, the polar polytope $P^*_{\M}$ has seven vertices,
which lie in the directions $\set{-e_1,e_2,e_3,e_4,e_5,e_{124},e_{135}}$.
The Bergman complex $B_{\M}$
consists of the edges shown in bold in Figure~\ref{fig:bergman_delA3_BM}.
\end{example}
\begin{figure}
\subfigure[$B_{\M}$ inside the Schlegel diagram of
$P^*_{\M}$]{
\begin{tikzpicture}[scale=2.3]
\tikzstyle{every node}=[circle,draw,fill=white,inner sep=1.5pt]
\node[label=above:$135$] (A135) at (0,1.7) {};
\node[label=left:$5$] (A5) at (-1,0) {}; 
\node[label=right:$4$] (A4) at (1,0) {};
\node[label=left:$2$] (A2) at (-0.25,0.8) {};
\node[label=right:$3$] (A3) at (0.25,0.8) {};
\node[label=right:$\;124$] (A124) at (0,0.5) {};
\node[label=below:$\hat{1}$] (A1) at (-0.04,0.32) {};
\draw[style=thin,dashed,color=gray] (A2) -- (A1) -- (A3);  -- ghost vertex
\draw[style=thin,dashed,color=gray] (A4) -- (A1) -- (A5);

\draw[line width=4pt,color=white] (A124) -- (A5);
\draw[line width=4pt,color=white] (A4) -- (A124);

\draw[style=very thick,color=red!40!black] (A2) -- (A3);
\draw[line width=6pt,color=white] (A135) -- (A124);
\draw[style=very thick,color=red!40!black] (A5) -- (A135) -- (A124) -- (A4) -- (A5) -- (A2) -- (A124);
\draw[style=thin,color=gray] (A4) -- (A135) -- (A2);

\draw[style=very thick,color=red!40!black] (A135) -- (A3) -- (A4);
\draw[style=thin,color=gray] (A3) -- (A124) -- (A5);

\end{tikzpicture}
\label{fig:bergman_delA3_BM}
}
\subfigure[The loopy subcomplex of $P_{\M}$]{
\pgfdeclaredecoration{triangle}{start}{
  \state{start}[width=0.99\pgfdecoratedinputsegmentremainingdistance,next state=up from center]
  {\pgfpathlineto{\pgfpointorigin}}
  \state{up from center}[next state=do nothing]
  {
    \pgfpathlineto{\pgfqpoint{\pgfdecoratedinputsegmentremainingdistance}{\pgfdecorationsegmentamplitude}}
    \pgfpathlineto{\pgfqpoint{\pgfdecoratedinputsegmentremainingdistance}{-\pgfdecorationsegmentamplitude}}
    \pgfpathlineto{\pgfpointdecoratedpathfirst}
  }
  \state{do nothing}[width=\pgfdecorationsegmentlength,next state=do nothing]{
    \pgfpathlineto{\pgfpointdecoratedinputsegmentfirst}
    \pgfpathmoveto{\pgfpointdecoratedinputsegmentlast}
  }
}

\tikzset{
    triangle path/.style={decoration={triangle,amplitude=#1}, decorate},
    triangle path/.default=1ex}
\lower8pt\hbox{
\begin{tikzpicture}[scale=1.9]
\def\ip{0.6}
\tikzstyle{every node}=[circle,draw,fill=white,opacity=1.0,inner sep=1.5pt]
\coordinate[label=above left:$123$] (A123) at (-1,1) ;
\coordinate[label=above right:$134$] (A134) at (1,1) ;
\coordinate[label=below right:$145$] (A145) at (1,-1);
\coordinate[label=below left:$125$] (A125) at (-1,-1);

\coordinate (A235) at (-0.9*\ip,0);
\coordinate (A234) at (0,\ip);
\coordinate (A345) at (0.9*\ip,0);
\coordinate (A245) at (0,-\ip);


\draw[fill=red!20!white!85!black] (A235) -- (A234) -- (A345) -- (A245) -- (A235) -- cycle;
\begin{scope}[fill opacity=0.5, fill=green!50!blue!30!white]
\fill (A123) -- (A134) -- (A234) -- cycle;
\fill (A123) -- (A235) -- (A125) -- cycle;
\fill (A125) -- (A245) -- (A145) -- cycle;
\fill (A134) -- (A345) -- (A145) -- cycle;
\end{scope}
\draw[line width=0.65pt] (A123) -- (A134) -- (A145) -- (A125) -- (A123);

\draw[style=very thin,gray] (A235) -- (A345);  
\draw[line width=6pt,color=red!20!white!85!black] (A234) -- (A245);
\draw[style=very thick] (A234) -- (A245);  


\def\tipwidth{0.4pt}
\draw[fill=black,triangle path=\tipwidth] (A123) -- (A234);
\draw[fill=black,triangle path=\tipwidth] (A134) -- (A234);


\draw[fill=black,triangle path=\tipwidth] (A125) -- (A245);
\draw[fill=black,triangle path=\tipwidth] (A145) -- (A245);


\draw (A145) -- (A345) -- (A134);
\draw (A123) -- (A235) -- (A125);

\foreach \v in {(A123),(A134),(A234),(A235),(A125),(A245),(A145),(A345)} {
  \draw[fill=white] \v circle (1pt);
}

\node[label=left:$235$] at (A235) {};
\node[label=above:$234$] at (A234) {};
\node[label=right:$345$] at (A345) {};
\node[label=below:$245$] at (A245) {};

\end{tikzpicture}
} 
\label{fig:loopy}
} 
\caption{A Bergman complex and its dual, Example~\ref{ex:delA3}}
\label{fig:bergman_delA3}
\end{figure}

\begin{remark}\label{rem:degree}
Ardila and Klivans~\cite{AK06} show that the Bergman complex $B_{\M}$
is homeomorphic to the order complex of the open interval $(\bottom,\top)$
in $L(\M)$.  The latter is known to be homeomorphic to a wedge of
$\mu$ many $(d-2)$-spheres, where the number $\mu$
is, up to sign, the value of the
M\"obius function over the lattice: $\mu=(-1)^{d}\mu_{L(\M)}(\bottom,\top)$.
Hacking finds an interesting generalization of this in  \cite{Ha08}.
The number $\mu$ is also the
top Betti number of $\P U(\A)$, if $\M$ is the matroid of a complex arrangement
$\A$.

By Alexander duality, the subcomplex of $P_{\M}$ consisting of faces indexed
by matroids with loops also has the
homology of a wedge of $\mu$ spheres of dimension $n-d-1$.  In the examples
shown in Figures~\ref{fig:octahedron}, \ref{fig:loopy}, it can be
seen that this ``loopy subcomplex'' is homotopic
to a wedge of spheres.  It would be interesting to know a direct proof.
\end{remark}

\subsection{Amoebas}
For a subset $X\subseteq T^n$, its (classical) {\em amoeba} is defined to be
the set
\[
A_t(X):=\set{(\log_t\abs{x_1},\log_t\abs{x_2},\ldots,\log_t\abs{x_n})\colon
x\in X}\subseteq \R^n,
\]
where $t>1$ is the base of the logarithm.
If $X$ is closed under the diagonal action of $\C^*$, then $A_t(X)$ is
invariant under translation by the vector $e_{[n]}$, so
$A_t(X)$ can be taken to be a subset of $\R^n/\R e_{[n]}$.

The {\em logarithmic limit set}, denoted $\Log(X)$, is the limit (in the
Hausdorff metric) of the amoebas $A_t(X)$ as $t\to\infty$.
If $X$ is an algebraic subvariety of the torus, then a foundational
result in tropical geometry from \cite[\S 9]{Stu02} says that points in
$\Log(X)$ are indexed by initial ideals defining nonempty varieties.
\begin{theorem}[\cite{Stu02}]\label{thm:tropical}
For a subvariety $X$ of $\PT^{n-1}$, the set
$-\Log(X)$ equals the support of the Bergman fan, $\abs{\Be(X)}$.
In particular, for a hyperplane complement $\P U\subseteq \PT^{n-1}$,
we have $-\Log(\P U)=\abs{\Be_{\M(V)}}$.
\end{theorem}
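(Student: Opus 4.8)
The plan is to prove, for every nonzero $w\in N_\R$, the equivalence of three statements: (i) the ray in the direction attached to $w$ belongs to the logarithmic limit set, that is (with the sign convention of the statement) $-w\in\Log(X)$; (ii) the initial ideal $\In_w(I)$ contains no monomial; (iii) $\sigma_w$ is a cone of $\Be(X)$. Statements (ii) and (iii) are identified by Definition~\ref{def:bergman}, so the content is the equivalence of (i) and (ii); the origin lies in both $\Log(X)$ and $\abs{\Be(X)}$, and both are closed cones, so restricting to $w\neq0$ loses nothing. I would fix the sign bookkeeping at the outset from the identity $\bigl(\log_t\abs{w(\tau)_i}\bigr)_i=(\log_t\tau)\,w$ for the one-parameter subgroup $w(\tau)=(\tau^{w_1},\dots,\tau^{w_n})$, together with the computation in \S\ref{sec:tropical} that $\lim_{\tau\to0}w(\tau)\cdot V=V_w$, where $V_w$ is the linear space cut out by $\In_w(I(V))$; this is exactly what matches degeneration toward the face $\M_w$ of the matroid polytope with escape to infinity in the amoeba.

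For the direction showing that (ii) rules out (i) I would use a tropical-hypersurface estimate. If $\In_w(I)$ contains a monomial then, after the standard reduction, there is $g\in I$ whose $\prec_w$-leading form is a single monomial $c\,x^a$; on $X\subseteq V(I)$ the vanishing of $g$ then bounds $\abs{x^a}$ by a constant times $\max_b\abs{x^b}$, the maximum running over the finitely many other monomials of $g$. Taking $\log_t$ of this bound and passing to the limit along a putative sequence in $X$ whose normalized logarithms converge to the direction in question produces a linear inequality among the pairings of $w$ with the exponents of $g$ that contradicts the strict dominance of $w\cdot a$. Hence that direction cannot lie in the logarithmic limit set, which gives one inclusion of the asserted equality.

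For the reverse inclusion one must realize a prescribed tropical direction by actual points of $X$, and I would separate the general case from the hyperplane-complement case. In general this is the main result of \cite[\S9]{Stu02}: one lifts a point of the \emph{nonempty} variety $V(\In_w(I))\cap T^n$ to a point of $X$ over the Puiseux series field $\C\{\{\tau\}\}$ with coordinatewise valuation $w$, using flatness of the Gr\"obner degeneration to guarantee that the lift exists, and then specializes $\tau$ to small complex values to obtain a sequence in $X$ whose coordinatewise logarithms run to infinity in the required direction. For $X=\P U(\A)$ this step is self-contained with the material above: if $\sigma_w\in\Be(\M)$ then the degeneration matroid $\M_w$ is loop-free, so by Proposition~\ref{prop:Grob} the limit linear space $V_w=\lim_{\tau\to0}w(\tau)\cdot V$ meets $T^n$; choosing $x\in V_w\cap T^n$ and writing it as a limit of points $x(\tau)\in w(\tau)\cdot V$, these lie in $T^n$ for small $\tau$, hence in $w(\tau)\cdot U(\A)=w(\tau)V\cap T^n$, so $w(\tau)^{-1}x(\tau)\in U(\A)$, and its coordinatewise logarithm $-w_i\log_t\tau+\log_t\abs{x(\tau)_i}$ escapes to infinity along the ray attached to $w$ as $\tau\to0$, so that ray lies in $-\Log(\P U(\A))$.

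The hard part is this reverse inclusion in full generality: producing honest complex points of $X$ realizing a chosen tropical direction genuinely needs a non-archimedean lift, and the fact that this is possible exactly when $\In_w(I)$ is monomial-free is the content of the fundamental theorem of tropical geometry; for a hyperplane complement this is replaced by the explicit degeneration $\lim_{\tau\to0}w(\tau)\cdot V=V_w$. A secondary technical point to pin down is the reduction ``a monomial lies in $\In_w(I)$ $\Rightarrow$ some single element of $I$ has monomial $\prec_w$-leading form,'' and, in the Puiseux argument, the passage from formal solutions to ones convergent for small $\tau$.
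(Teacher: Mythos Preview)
The paper does not give its own proof of Theorem~\ref{thm:tropical}: it is quoted as a ``foundational result in tropical geometry from \cite[\S 9]{Stu02}'' and stated with attribution, then used as a black box. So there is nothing in the paper to compare your proposal against.

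On its own merits, your outline is essentially the standard proof of the fundamental theorem of tropical geometry, and you have correctly identified the asymmetry: the direction ``monomial in $\In_w(I)$ $\Rightarrow$ direction excluded from the log-limit set'' is an elementary inequality argument, while the converse genuinely requires producing points of $X$ with prescribed valuations, which in general needs the Puiseux lift and flatness of the Gr\"obner family. Your specialization to $X=\P U(\A)$ is a nice self-contained shortcut: since the paper already establishes $\lim_{\tau\to0}w(\tau)\cdot V=V_w$ and Proposition~\ref{prop:Grob} identifies $\M(V_w)=\M_w$, loop-freeness gives a point of $V_w\cap T^n$ which you then pull back along the one-parameter family to get actual points of $U(\A)$ with the right asymptotics. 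That argument is sound.

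Two things to tighten. First, the sign bookkeeping: the paper's $\In_u$ takes the $\prec_u$-\emph{maximal} terms (not minimal), and the Bergman fan is the \emph{outer} normal fan of $P_{\M}$, so you should verify carefully that your amoeba computation for $w(\tau)^{-1}x(\tau)$ lands you on the correct side of the minus sign in $-\Log(\P U)=\abs{\Be_{\M}}$; as written, your last sentence asserts the ray ``lies in $-\Log(\P U(\A))$'' immediately after computing an escape along $w$, and that step deserves one more line. Second, the reduction ``$\In_w(I)$ contains a monomial $\Rightarrow$ some $g\in I$ has monomial $w$-leading form'' is, as you note, not automatic for a weight order with ties; the usual fix is to refine $\prec_w$ by a genuine term order and invoke the corresponding Gr\"obner basis, and it would be worth saying so explicitly.
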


The tropicalization  of $X$ consists of the set $\abs{\Be(X)}$, together with
some additional (integer) data which we will ignore here.  
In the case where $X$ is linear, it
is clear what information about $X$ is preserved in passing to
its tropicalization: while an amoeba $A_t(X)$ depends on the equations defining
$X$, the logarithmic limit set depends only on the underlying matroid.

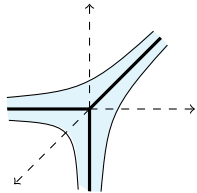
\begin{figure}
\begin{tikzpicture}
\begin{scope}
\clip (0.1,0.1) circle (1.2);
\draw[draw=white,fill=cyan!10!white] (0.144,-1.2) .. controls (0.25,0) .. (1.4,1.2) --
(1.2,1.4) .. controls (0,0.25) .. (-1.2,0.144) -- (-1.2,-0.144)
 .. controls (-0.2,-0.2) ..( -0.144,-1);
\draw[style=very thick] (0,0) -- (0,-1.2);
\draw[style=very thick] (0,0) -- (-1.2,0);
\draw[style=very thick] (0,0) -- (1.35,1.35);
\draw (0.144,-1.2) .. controls (0.25,0) .. (1.4,1.25);
\draw (-0.144,-1.2) .. controls (-0.2,-0.2) ..( -1.2,-0.144);
\draw (-1.2,0.144) .. controls (0,0.25) .. (1.25,1.4);
\end{scope}
\draw[dashed,->] (0,0) -- (0,1.4);
\draw[dashed,->] (0,0) -- (1.4,0);
\draw[dashed,->] (0,0) -- (-1,-1);
\end{tikzpicture}
\caption{Amoeba and logarithmic limit set for $V$ with $\M(V)=U_{2,3}$}
\label{fig:amoeba}
\end{figure}

\begin{example}
Consider a plane $V$ with $\M(V)=U_{2,3}$.  Then
\[
A_t(\P U)=\set{(\log_t\abs{z},\log_t\abs{w},\log_t\abs{z-w})\colon [z:w]\in
\P U(\A)}\subseteq\R^3/\R(1,1,1)
\]
An amoeba and logarithmic limit set for $\P U$ are shown in
Figure~\ref{fig:amoeba}.
\end{example}

\section{Compactifications}\label{sec:cpct}
If $\A$ is an arrangement,
we will say $Y$ is a {\em compactification} of $\A$ if
$Y$ is a complete complex variety and $\P U(\A)$ is a dense open subset
of $Y$.  Recall that a Zariski open subset is dense if and only if it
is dense in the complex topology, and complete varieties are compact
in the complex topology.
Of course, one way to produce a compactification
is to take the closure of the embedding of $\P U(\A)$ in some complete
variety, and we will do this here.  Since arrangement complements
sit naturally as subvarieties of tori, it makes sense to consider
their closures in various toric varieties.  We will give some examples
before considering discussing the general theory in the next section.

\subsection{The reciprocal plane}
Of course, $\P V$ is a compactification of $\A$, obtained from embedding
the complement in $\P^{n-1}$.  A
more interesting example with the same ambient space
can be obtained by letting $i\colon \PT^n\to \PT^n$
denote the inverse map in the torus, and letting
\begin{equation}\label{eq:recip}
Y(\A)=\overline{i(\P U(\A))}\subseteq \P^{n-1}.
\end{equation}
(Regarded as a rational map, $i\colon \P^{n-1}\dashrightarrow\P^{n-1}$
is called the standard Cremona transformation.)  This construction
has been studied in a number of papers: see, for example,
\cite{PS06,ST09,Loo03,HT03,SSV11}.  Recently it appeared in the work of
Huh and Katz on the log-concavity of the coefficients of the
characteristic polynomial (in arbitrary characteristic): see \cite{HK11}
and \cite{Le11}.

In particular, the homogeneous coordinate ring of $Y(\A)$ equals
$\C[1/f_i\colon 1\leq i\leq n]$.  To describe this as a quotient of a
polynomial ring, continue to
let $S:=\C[x_1,\ldots,x_n]$ denote the homogeneous coordinate ring of
$\P^{n-1}$.  The inclusion $Y(\A)\subseteq \P^{n-1}$
induces a surjective homomorphism
\begin{equation}\label{eq:OTpres}
S\to \C[1/f_i\colon 1\leq i\leq n]
\end{equation}
sending $x_i$ to $1/f_i$, for $1\leq i\leq n$.  Let $I(\A)$ denote the
kernel of the map \eqref{eq:OTpres}.

To describe generators of $I(\A)$, let $\supp{c}=\set{i\in[n]\colon c_i\neq 0}$
for $c\in\C^n$, and let
%
\begin{equation}\label{eq:OTrels}
r_c:=\sum_{i\in \supp{c}}c_i \prod_{j\in\supp{c}-\set{i}} x_j.
\end{equation}
If $c\in V^\perp$, then $\sum_{i=1}^n c_i f_i=0$, so $r_c\in I(\A)$.
A set of generators for $I(\A)$ is given by selecting those $r_c$ for which
$c\in V^\perp$ and $\supp{c}$ is minimal (the {\em circuits} of $\A$).
It is known that $S/I(\A)$ is Cohen-Macaulay,
and that it has a Gr\"obner basis indexed by the broken circuit
complex~\cite{PS06}.  Questions about syzygies of $I(\A)$, such as
degrees of minimal sets of generators and Castelnuovo-Mumford regularity,
seem to be difficult in general: see \cite{ST09}.
Sanyal, Sturmfels and Vinzant give a matroidal condition
that characterizes those subsets of the equations \eqref{eq:OTrels}
determine $Y(\A)$ set-theoretically in \cite[Prop.~21]{SSV11}.


\begin{example}\label{ex:uniform2}
If $d=\dim(V)=2$ and any two coordinates of $f\colon V\to \C^n$ are
linearly independent, then $\M(V)\cong U_{2,n}$ (Example~\ref{ex:uniform}).
The circuits of the matroid consist of all three-element subsets of $[n]$,
and the ideal $I(\A)$ is generated by quadrics.  By general theory (or
just hands-on linear algebra), $n-1\choose 2$ quadrics are required.
Since $\P V$ has codimension $n-2$ in $\P^{n-1}$, so does $Y(\A)$.  Hence
the depth (and projective dimension) of $I(\A)$ are equal to $n-2$.

More generally, if $\M(V)\cong U_{d,n}$, a uniform matroid with arbitrary
parameters, then again the syzygies of $I(\A)$ can be understood completely.
$I(\A)$ is generated by $n-1\choose k$ generators of
degree $d$, and $I(\A)$ has a linear resolution which is an
Eagon-Northcott complex: see \cite[\S 4.4]{DGT12}.
\end{example}
\subsection{Visible contours}\label{ss:vc}

The reciprocal plane lacks some features that one normally expects from
a compactification: in particular, $Y(\A)$ is not smooth, in general.
(The singular locus is described in  \cite{SSV11}.)  The following
construction is an improvement. 

Recall that,
provided the matroid $\M(V)$ is connected, the torus orbit of $V$ in $\Gr_{d,n}$
given by $a_V(t)=t\cdot V$ was isomorphic to $\PT^{n-1}$, and we saw
in the previous section that the orbit closure was a toric variety
$X_{\M}$ given, abstractly, by the matroid $\M=\M(V)$ (Theorem~\ref{thm:GGMS}).

Since the arrangement complement $\P U(\A)$ is given as a subspace of
$\PT^{n-1}$, it makes sense to consider its closure inside
$\Gr_{d,n}$.  Instead of doing this directly, though, we want to use the
reciprocal embedding instead, and define
\begin{equation}\label{eq:Xvc}
Y_{\vc}(\A)=\overline{a_V\circ i(\P U(\A))}\subseteq \Gr_{d,n}.
\end{equation}
This is the {\em visible contours} compactification from \cite{Ka93}.  It 
is also called the {\em tropical} compactification in \cite{FS05}; 
however, we will avoid the term here, since there are various 
tropical compactifications in the sense of Tevelev \cite{Te07}.
We
leave it as an exercise to the reader to find the appropriate modifications
in the case where $\M(V)$ has more than one connected component.

To see why the inverse map should appear here, consider replacing $V$ by
another subspace in the same orbit, $V':=t\cdot V$,
for some $t\in \PT^{n-1}$.  The hyperplane complements are related by
$\P U':=t\cdot \P U$.  Then
\begin{eqnarray*}
a_{V'}\circ i(\P U') &=& a_{tV}\circ i(t\cdot \P U)\\
&=& a_{V}\circ t i(t\cdot \P U)\\
&=& a_V\circ i(\P U),
\end{eqnarray*}
so the construction of $Y_{\vc}(\A)$, as a subvariety of the Grassmanian, 
is independent of our choice of subspace from the orbit $\PT^{n-1}\cdot V$.

We leave some straightforward assertions about the inverse map as an exercise:
\begin{exercise}\label{ex:inverses}\leavevmode
\begin{itemize}
\item For $t\in \PT^{n-1}$ and $V\in \Gr_{d,n}$, we have $(i(t)\cdot V)^\perp=
t\cdot V^{\perp}$.
\item If $X$ is a toric variety with torus $T$, define a toric variety
$X^{-1}$ whose underlying variety is also $X$, but the action of $T$
on $X^{-1}$ is constructed from the action on $X$ by letting $t\cdot x=i(t)x$.
Show that $\Sigma_{X^{-1}}=-\Sigma_X$.  If $X$ is projective, show that
$P_{X^{-1}}=-P_X$.  Decide when $X$ and $X^{-1}$ are isomorphic as
toric varieties.
\item For any matroid $\M$, check the weight polytope of $X_{\M}^{-1}$ is
$P_{\M^*}$ (see Exercise~\ref{ex:dual_polytope}.)
\end{itemize}
\end{exercise}

Of course we could also define $Y_{\vc}(\A)$ as the closure of the
hyperplane complement in the toric variety $X_{\M}^{-1}$, since
\[
\P U(\A)\hookrightarrow X_{\M}^{-1}\subseteq \Gr_{d,n}.
\]
The hyperplane complement has codimension $n-d$ in $X_{\M}^{-1}$, and its
closure only intersects some of the torus orbits.  Those orbits have a nice
description.

\begin{theorem}[\cite{FS05}]\label{thm:bergman}
For all $\sigma\in \Sigma_{\M}$, we have
$O(\sigma)\cap Y_{\vc}(\A)\neq\emptyset$ if and only if $\sigma\in \Be(\M)$.
That is,
the smallest toric variety in $X_{\M}^{-1}$ that contains $Y_{\vc}(\A)$
is $X_{\Be(\M)}$.
\end{theorem}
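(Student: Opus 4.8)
The plan is to identify which torus orbits of $X_{\M}^{-1}$ meet the closure $Y_{\vc}(\A)$ by combining the toric dictionary from \S\ref{sec:toric} with the tropical interpretation of the Bergman fan from \S\ref{sec:tropical}. The starting point is the observation that a torus orbit $O(\sigma)$ of a toric variety $X$ meets the closure of a subvariety $U \subseteq T^n$ if and only if the cone $\sigma$ lies in the \emph{tropicalization} of $U$ — this is the Tevelev-style criterion that underlies tropical compactifications, and in our linear setting it is essentially Theorem~\ref{thm:tropical} applied orbit by orbit. So the heart of the argument is to show that the set of cones $\sigma$ for which $O(\sigma) \cap \overline{\P U} \neq \emptyset$ is exactly $\Be(\M)$, and then to transport this statement through the inverse map to $X_{\M}^{-1}$.

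First I would set up the orbit-by-orbit limit computation directly. For $\sigma \in \Sigma_{\M}$, pick a lattice point $u$ in the relative interior of $\sigma$, regarded as a one-parameter subgroup. For a point $v \in \P U(\A)$, the limit $\lim_{t \to 0} u(t) \cdot i(v)$ exists in $X_{\M}^{-1}$ (since $X_{\M}^{-1}$ is complete), and it lies in $O(\sigma)$; the question is whether this limit can be arranged to lie in $O(\sigma)$ itself rather than in a smaller orbit, and whether it lies in $\overline{\P U}$. Here I would use the computation from the discussion preceding Proposition~\ref{prop:Grob}: the degeneration $V_u$ defined by $\In_u(I(V))$ is precisely $\lim_{t\to 0} u(t) \cdot V$, and by Proposition~\ref{prop:Grob} the matroid of $V_u$ is the degeneration matroid $\M_u$. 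The key point is that $O(\sigma) \cap Y_{\vc}(\A) \neq \emptyset$ if and only if the degenerate linear space $V_u$ still meets the torus, i.e. $V_u \cap T^n \neq \emptyset$, which by the discussion in \S\ref{sec:tropical} (Bergman fans) happens exactly when $\In_u(I(V))$ contains no monomial, equivalently when $\M_u$ has no loop — and that is the defining condition for $\sigma \in \Be(\M)$.

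Carefully, the argument splits into two inclusions. If $\sigma \in \Be(\M)$, then $\M_u$ is loop-free, so $V_u \cap T^n$ is a nonempty hyperplane complement, and one checks that a generic point $v$ of $\P U(\A)$ has $\lim_{t\to 0} u(t)\cdot i(v)$ landing in $\P(V_u \cap T^n) \subseteq O(\sigma)$, hence $O(\sigma) \cap Y_{\vc}(\A) \neq \emptyset$. Conversely, if $\sigma \notin \Be(\M)$, then $\M_u$ has a loop, say coordinate $j$; by Theorem~\ref{thm:flacet0} the facet-type face has $e_j$ as an inner normal, and in the reciprocal (inverted) picture the corresponding orbit is contained in the coordinate hyperplane $x_j = 0$ — but $\P U(\A)$ and all its limits under the reciprocal embedding avoid that hyperplane precisely because $f_j$ is a unit on $U(\A)$, so $1/f_j$ is regular; hence the limit cannot lie on that orbit. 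Both directions must be stated in $X_{\M}^{-1}$ rather than $X_{\M}$, which is why Exercise~\ref{ex:inverses} is invoked: the Bergman fan $\Be(\M)$ sits inside $\Sigma_{\M}$ (the normal fan of $P_{\M}$, which is also $\Sigma_{X_{\M}^{-1}}$ up to the sign bookkeeping recorded there), so the "smallest toric variety containing $Y_{\vc}(\A)$" is the toric variety $X_{\Be(\M)}$ associated to the subfan $\Be(\M)$.

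The main obstacle I expect is the genericity/openness issue in the first inclusion: showing not merely that \emph{some} limit lands in $O(\sigma)$ but that $O(\sigma) \cap Y_{\vc}(\A)$ is nonempty requires knowing that the family of limits $\lim_{t\to 0} u(t)\cdot i(v)$, as $v$ ranges over $\P U(\A)$, actually surfaces in the open orbit $O(\sigma)$ and not only in its boundary — i.e. that the "obvious" limit point is generic in the degenerate hyperplane complement $\P(V_u \cap T^n)$. This is handled by a standard semicontinuity argument (the set of $v$ whose limit lies in $O(\sigma)$ is constructible and nonempty, being exactly the preimage of the dense open stratum under a morphism of the associated flat degeneration), but it is the one place where one genuinely uses that $\P U(\A)$ is irreducible and dense in $X_{\M}^{-1}$. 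Everything else is a matter of assembling Propositions~\ref{prop:Gr_orbit}, \ref{prop:Grob}, Theorems~\ref{thm:flacet0}, \ref{thm:GGMS}, \ref{thm:tropical}, and the elementary facts about the inverse map in Exercise~\ref{ex:inverses}.
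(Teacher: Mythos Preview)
The paper does not actually supply a proof of this theorem: it is stated with attribution to \cite{FS05} and immediately followed by commentary on the fan $\Be(\M)$ and a citation of Tevelev's sch\"on result, with no intervening \texttt{proof} environment. So there is no argument in the paper to compare your proposal against.

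That said, your outline is essentially the correct strategy and is close in spirit to the argument in \cite{FS05} (and to Tevelev's general tropical-compactification criterion in \cite{Te07}). The identification of orbit closures with faces of $P_{\M}$, the use of Proposition~\ref{prop:Grob} to match $\M_u$ with $\M(V_u)$, and the observation that loop-freeness of $\M_u$ is precisely the condition that the degenerate linear space meets the torus, are all the right ingredients. Your caution about the genericity step is well placed: one does need to know that the limit of a \emph{generic} $v\in\P U(\A)$ under the one-parameter subgroup $u$ lands in the open orbit $O(\sigma)$ rather than in a boundary orbit. In the linear case this can be done quite concretely, since the limit $\lim_{t\to0}u(t)\cdot(i(v)\cdot V)$ is again a linear space whose matroid one can compute explicitly (it is $\M_u$ for generic $v$), so the semicontinuity argument you sketch can be replaced by a direct calculation. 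The sign bookkeeping between $\Sigma_{\M}$, $\Sigma_{X_{\M}}$, and $\Sigma_{X_{\M}^{-1}}$ is a bit tangled in your write-up and would benefit from being made explicit, but it is not a genuine obstacle.
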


We note that $\Be(\M)$ is not a complete fan, so our minimal compactifying
toric variety $X_{\Be(\M)}$ is not, itself, compact.  
In \cite[Thm.~1.5]{Te07}, Tevelev shows that linear spaces are {\em sch\"on},
which means in particular that the restriction of the multiplication map
$
\PT^{n-1}\times Y_{\vc}(\A)\to X_{\Be(\M)}
$
is smooth.  It follows that $Y_{\vc}(\A)$ is smooth if and only if 
$X_{\Be(\M)}$ is smooth, and in general, $Y_{\vc}(\A)$ has at 
most toroidal singularities.
Examples show that
the Bergman fan need not be simplicial, so the visible contours compactification
is not, in general, smooth: for a family of examples, see \cite{DD12}.
Here is one from the literature.
\begin{example}
Let $\A$ be a
hyperplane arrangement consisting of the six face planes of a cube
in $\P^3$: for example, let $V$ be the image in $\C^4$ of 
\[
f(z,u,v,w)=(z+u,z-u,z+v,z-v,z+w,z-w).
\]
Then $\B_{\M(\A)}$ is a nonsimplicial $2$-complex, described
explicitly in \cite[Ex.~2.8]{FS05}.
\end{example}

\begin{exercise}
Note that $\Gr_{1,n}=\P^{n-1}$, and find a family of compactifications that
interpolates between the reciprocal plane \eqref{eq:recip} and the
visible contours compactification \eqref{eq:Xvc}.
\end{exercise}

\begin{example}\label{ex:uniform3}
For $\M=U_{2,4}$, we saw that
$\Be(\M)$ consists of the four rays
shown in bold in Figure \ref{fig:octahedron_b}, and Theorem~\ref{thm:bergman}
can be verified directly.
For any $t\in \PT^3$, the plane $i(t)\cdot V$ is the row space of
\begin{equation}\label{eq:tA_U24}
\begin{pmatrix}
t^{-1}_1 & 0 & t^{-1}_3 & t^{-1}_4\\
0 & t^{-1}_2 & -t^{-1}_3 & t^{-1}_4
\end{pmatrix}.
\end{equation}
On the other hand, a point
$[z:w]\in\P^1$ is in $\P U$ provided that $z,w\neq0$ and $z\neq
\pm w$.  Its image under $a_V\circ i$, using \eqref{eq:tA_U24},
is the set of solutions to
\begin{align*}
-zx_1 +wx_2+(z-w)x_3\phantom{+x_4} &=0\\
-zx_1 -wx_2\phantom{+x_3}+(z+w)x_4 &=0.
\end{align*}
In this case, we simply have $Y_{\vc}(\A)\cong\P^1$, since the
boundary points are obtained by letting $z=0$, $w=0$, $z=w$, or $z=-w$.
In each case, the linear space of solutions is not contained in a
coordinate hyperplane, so its matroid has no loop, and it
lies in a torus orbit indexed by a shaded face of $P_{\M}$ in
Figure~\ref{fig:octahedron}.
\end{example}

\subsection{Maximum likelihood, master functions and critical points}
Before moving to the next construction, we digress slightly
in order to mention another,
closely related variety.  For a subspace $V$, define
\begin{equation}\label{eq:gammaV}
\Gamma_V=\set{(x,y)\in \PT^{n-1}\times\C^n\colon y\in x\cdot
V^{\perp}},
\end{equation}
noting again that $x\cdot V^\perp=(i(x)\cdot V)^\perp$.
Pulling back the projection onto the first factor along the inclusion
of the hyperplane complement gives a new variety $\crit(\A)$, the
{\em variety of critical points}:
\[
\xymatrix{
\crit(\A)\ar@{.>}[r]\ar@{.>}[d] & \Gamma_V\ar[d]\\
\P U(\A)\ar[r]^{f} & \PT^{n-1}.
}
\]
To justify the name, we should restrict our attention to  lattice points
$m\in\Z^n$.  Then $(x,m)\in\crit(\A)$
if and only if $x$ is a critical point of the master function
$\prod_{i=1}^n f_i^{m_i}$ (see Remark~\ref{rem:veryaffine}).
The second factor of \eqref{eq:gammaV} is stable under the diagonal
action of $\C^\times$: let $\X(\A)$ denote the closure of the quotient
of $\crit(\A)$ in $\P^{n-1}\times\P^{n-1}$: this is compact, but not
smooth.  It is known to be Cohen-Macaulay for all arrangements $\A$,
but not, in general, arithmetically Cohen-Macaulay (\cite{CDFV11}).

If we let $\P U^\perp=\P V^\perp\cap\, \PT^{n-1}$, the complement of the dual
arrangement, then $\crit(\A)/\C^\times$ contains a simple-looking
dense, open subset:
\begin{align*}
(\crit(\A)/\C^\times)\cap T&=\set{(x,xy)\in T\colon
x\in \P U,\,y\in \P U^\perp},\\
&\cong \P U\times \P U^\perp,
\end{align*}
where $T:=\PT^{n-1}\times\PT^{n-1}$.  With this in mind, we see that
the compactification $\X(\A)$ is the closure of $\P U\times 
\P U^\perp$ in a toric variety isomorphic
to $\P^{n-1}\times\P^{n-1}$ with a nonstandard torus action.  
The automorphism $T\to T$ given by 
$(x,y)\mapsto(x,xy)$ induces a linear automorphism of the character
lattice $M=\Z^{n-1}\times \Z^{n-1}$, and the weight polytope of the
toric variety is just the image of $\Delta^{n-1}\times \Delta^{n-1}$
under this map.  We leave the details to the reader.

The critical points of a master function are interesting from various
points of view, including algebraic statistics, mathematical physics,
and the theory of hyperplane arrangements: see, in particular,
\cite{CHKS06,DGS12}.  Some remarkable results about critical point
varieties about more general very affine varieties appears in \cite{Huh12}.

By regarding the critical point variety as a compactification
of a product of projective linear spaces, we see some symmetry
between the underlying matroid and its dual that is not immediately apparent
from the perspective of critical points of master functions.  In particular,
$\X(\A(V))$ and $\X(\A(V^\perp))$ are both compactifications of
$\P U\times \P U^\perp$, while their 
respective ambient toric varieties differ by
the involution exchanging the factors.

\subsection{Wonderful models}\label{ss:wonderful}
In 1995, De Concini and Procesi~\cite{deCP95} studied a family of
compactifications obtained by iteratively blowing up the projective space
$\P V$ along (proper transforms of) linear subspaces, in increasing order
of dimension.  Their compactification can be obtained in several different
ways, and it has some very desirable properties.
In particular, the boundary of $\P U(\A)$ is well-behaved:
it is a union of hypersurface components, each one isomorphic to
$\P^{d-1}$.  These components intersect with normal crossings,
which is to say that
the neighbourhood of a boundary point looks locally like an
intersection of coordinate hyperplanes.

The number of boundary components and the way in which they intersect
depends on some interesting combinatorics, which we will now describe.
A good expository resource is provided by \cite{Fe05}.  A detailed
analysis and abstraction of the relevant combinatorics appears in 
\cite{FK04}.

\begin{definition}\label{def:building}
Let $L$ be a partially ordered set with unique minimal element $\bottom$.
A subset $G\subseteq L-\set{\bottom}$ is a {\em building set} if, for every
$X\in L$, we have an order-isomorphism
\begin{equation}\label{eq:building}
[\bottom,X]\cong\prod_{Y\in \max(G\cap[\bottom,X])}[\bottom,Y],
\end{equation}
where, for a set $S\subseteq L$, the notation
``$\max\,{S}$'' denotes the subset of maximal elements.
\end{definition}

\begin{lemma}[Prop.~2.5(1), \cite{FK04}]\label{lem:FK2}
Suppose $G$ is a building set, $Y\in L$, and $X\in G$.  If $X\leq Y$,
then $X\leq Y_i$ for a unique element $Y_i\in\max(G\cap[\bottom,Y])$.
\end{lemma}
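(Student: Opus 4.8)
The plan is to unpack Definition~\ref{def:building} directly for the interval $[\bottom,Y]$ and read off what it says about $X$. Since $G$ is a building set, taking the poset element to be $Y$ in \eqref{eq:building} gives an order-isomorphism
\[
\phi\colon [\bottom,Y]\xrightarrow{\ \sim\ }\prod_{Z\in\max(G\cap[\bottom,Y])}[\bottom,Z].
\]
Write $\max(G\cap[\bottom,Y])=\set{Y_1,\ldots,Y_m}$. The key point is to understand where $X$ goes under $\phi$. Since $X\leq Y$ and $X\in G$, certainly $X\in G\cap[\bottom,Y]$, so $X\leq Y_i$ for at least one $i$ (every element of $G\cap[\bottom,Y]$ lies below some maximal element, as $[\bottom,Y]$ is finite). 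The whole content of the lemma is the \emph{uniqueness} of such an $i$.

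**The main step: uniqueness.** First I would observe that the image $\phi(X)$ is a tuple $(\phi(X)_1,\ldots,\phi(X)_m)$ with $\phi(X)_j\in[\bottom,Y_j]$, and that $\phi(Y_i)$ is the tuple whose $i$th coordinate is $Y_i$ (the top of $[\bottom,Y_i]$) and whose other coordinates are $\bottom$; this is because $\phi$ is an order-isomorphism and $Y_i$ is covered in $[\bottom,Y]$ precisely by the elements mapping to tuples with $i$th coordinate equal to $Y_i$ --- more carefully, $\phi$ restricts to an order-isomorphism $[\bottom,Y_i]\cong\prod_j[\bottom,(Y_i)_j]$, and comparing with \eqref{eq:building} applied to the element $Y_i$ itself forces $(Y_i)_j=\bottom$ for $j\neq i$ and $(Y_i)_i=Y_i$. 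Now $X\leq Y_i$ translates, under $\phi$, into $\phi(X)\leq\phi(Y_i)$ coordinatewise, i.e. $\phi(X)_j=\bottom$ for all $j\neq i$. If $X$ were $\leq Y_{i'}$ for a second index $i'\neq i$, the same reasoning would give $\phi(X)_j=\bottom$ for all $j\neq i'$; combining the two, $\phi(X)$ is the all-$\bottom$ tuple, so $X=\bottom$. But $X\in G\subseteq L-\set{\bottom}$, a contradiction. Hence the index $i$ is unique.

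**Loose ends.** I should double-check the claim that $\phi$ sends the interval $[\bottom,Y_i]\subseteq[\bottom,Y]$ onto the subproduct $\set{\bottom}\times\cdots\times[\bottom,Y_i]\times\cdots\times\set{\bottom}$. This follows from: (a) $\phi$ is an order-isomorphism, so it maps the principal ideal generated by $Y_i$ onto the principal ideal generated by $\phi(Y_i)$; and (b) the principal ideal of a tuple in a product poset is the product of the principal ideals of its coordinates. So once we know $\phi(Y_i)$ has the stated shape, the interval statement is automatic. To pin down $\phi(Y_i)$, note $\max(G\cap[\bottom,Y_i])$ must be $\set{Y_i}$ itself: $Y_i\in G$ lies in this set, and any $W\in G$ with $Y_i\le W\le Y$ satisfies $W\in G\cap[\bottom,Y]$ hence $W\le Y_j$ for some $j$, giving $Y_i\le Y_j$ among maximal elements of an antichain, so $W=Y_i$. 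Thus \eqref{eq:building} applied to $Y_i$ gives $[\bottom,Y_i]\cong[\bottom,Y_i]$, a single factor, which under the ambient iso $\phi$ must be the $i$th slot --- the other slots are killed.

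**Anticipated obstacle.** The only delicate point is bookkeeping: making the identification "$X\le Y_i$ in $[\bottom,Y]$ iff $\phi(X)\le\phi(Y_i)$ in the product iff $\phi(X)_j=\bottom$ for $j\ne i$" fully rigorous requires knowing $\phi(Y_i)$ exactly, and that in turn leans on the building-set axiom being applied not just to $Y$ but also to the sub-elements $Y_i$ --- a mild compatibility/functoriality check. Once that is in place, the contradiction argument is immediate. I expect this compatibility check to be the main (though still routine) obstacle; everything else is formal manipulation of product posets.
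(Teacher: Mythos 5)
Your overall strategy is the right one, and you correctly identify the crux: once we know that $\phi(Y_i)$ is the coordinate vector $(\bottom,\ldots,Y_i,\ldots,\bottom)$ for each $i$, then $X\leq Y_i$ and $X\leq Y_{i'}$ with $i\neq i'$ force $\phi(X)\leq\phi(Y_i)\wedge\phi(Y_{i'})=\bottom$, so $X=\bottom$, contradicting $X\in G\subseteq L-\set{\bottom}$. But the argument you give to \emph{establish} that shape of $\phi(Y_i)$ does not hold up. First, your verification that $\max(G\cap[\bottom,Y_i])=\set{Y_i}$ is both a detour and misdirected --- the elements $W$ with $Y_i\leq W\leq Y$ are irrelevant; the claim is trivial because $Y_i\in G$ is the top of $[\bottom,Y_i]$. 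More importantly, applying \eqref{eq:building} at $Y_i$ then only yields the tautology $[\bottom,Y_i]\cong[\bottom,Y_i]$, and there is no justification for the assertion that ``under the ambient iso $\phi$ [this] must be the $i$th slot.'' Definition~\ref{def:building} as written only asserts the existence of \emph{some} order isomorphism for each element of $L$; nothing ties the isomorphism at $Y_i$ to the isomorphism $\phi$ at $Y$, and nothing forces $\phi(Y_i)$ to have $\bottom$ in all but one coordinate. The restriction of $\phi$ to $[\bottom,Y_i]$ lands in the subproduct $\prod_j[\bottom,\phi(Y_i)_j]$, which could a priori involve several nontrivial factors.

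The missing ingredient is that the definition of building set in \cite{FK04} (Def.~2.2 there) includes a normalization that Definition~\ref{def:building} in this paper omits: the isomorphism in \eqref{eq:building} is required to send $(\bottom,\ldots,Y_j,\ldots,\bottom)\mapsto Y_j$ for each $j$. That is exactly the fact you are trying to derive, and it cannot be derived from the weaker statement as given --- it must be assumed. If you invoke that normalization explicitly (or cite it from \cite{FK04}), the rest of your argument --- existence of some $Y_i$ above $X$ by finiteness of $[\bottom,Y]$, then the coordinatewise-meet contradiction for uniqueness --- is correct and is the standard proof. So the fix is to treat the coordinate-vector identity as part of the hypothesis rather than as a consequence you can squeeze out of \eqref{eq:building} applied to $Y_i$.
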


The wonderful models are parameterized by building sets in the lattice of
flats, $L(\M)$.
\begin{example}
Tautologically, $G=L_{\geq1}(\M)$
is a building set for $L(\M)$.  On the other hand,
it is not hard to see that if $G$ is a building set, then it must be the
case that $G\supseteq L_\irr(\M)$.  By the discussion in \S\ref{ss:restrs},
if $X$ is not irreducible, then $L(\M|X)=[\bottom,X]$ decomposes as a
product of lower intervals in $L(\M)$, from which it follows that
$L_\irr(\M)$ is, itself, a building set.
So we see that $G_{\min} := L_\irr(\M)$ and $G_{\max} := L_{\geq1}(\M)$
are the unique maximal and minimal
building sets, respectively, and building sets themselves form a poset
under inclusion.
See, for example, \cite{GS12}, where the authors consider families of
building sets.
\end{example}

Let $\A$ be a hyperplane arrangement in $V\subseteq\C^n$, and let
$G\subseteq L(\M(V))$ be a building set.
For each $X\in G$, the coordinate projection
$\C^n\to \C^X$ from \S\ref{ss:restrs} induces a rational map
$p_X\colon\P^{n-1}\dashrightarrow \P^{\abs{X}-1}$ which is regular
(i.e., defined) on the torus $\PT^{n-1}$.
Let
\begin{equation}\label{eq:diagonalmap}
p\colon \P^{n-1}\dashrightarrow\prod_{X\in G} \P^{\abs{X}-1}
\end{equation}
be the map whose $X$th coordinate is $p_X$.  Again, this is regular on
$\PT^{n-1}$, so its restriction to $\P U(\A)$ is as well.  By
definition, the image of $V$ under coordinate projection is $V_X$, so
we may factor the restriction of $p$ as follows:
\begin{equation}\label{eq:triangle}
\xymatrix@R-25pt{
& \prod\limits_{X\in G}\P^{\abs{X}-1}\\
\P U(\A)\ar[ur]^-{p}\ar[dr] & \\
& \prod\limits_{X\in G}\P V_X\ar[uu]
}
\end{equation}
Since $\M$ is connected, the maximal flat $[n]$ is in $G$, so
$\P^{n-1}$ and $\P V$ are factors in the top and bottom products, respectively.
Then the maps in \eqref{eq:triangle} are all injective, so
the next definition makes sense.

\begin{definition}\label{def:wonderful1}
The De Concini-Procesi wonderful compactification of
$\A$ with building set $G$ is
\begin{equation}\label{eq:XdCP}
Y_{\dCP}(\A,G)=\overline{p(\P U(\A))}\subseteq
\prod_{X\in G}  \P V_X.
\end{equation}
\end{definition}

The boundary components in $Y_{\dCP}(\A,G)$ are indexed by the
building set $G$.  It remains to say which boundary components intersect,
and we will see that this depends only on the matroid of $\A$ and the
choice of building set.

Note that a building
set $G\subseteq L(\M)$ necessarily contains all singleton flats $\set{i}$,
since one-element matroids are connected.
\begin{definition}\label{def:nested}
Let $G\subseteq L$ be a building set in a lattice $L$.
A subset $S\subseteq G$ is a {\em nested set} for $G$ provided that, if
$X_1,\ldots,X_k$ are pairwise incomparable elements of $S$, and $k\geq2$,
then the join $X_1\vee\cdots\vee X_k\not\in G$.  Let $\Ne(G)$ denote the
set of all nested sets for $G$.
\end{definition}
\begin{example}
Certainly if $S\subseteq G$ is a chain, it is nested.
If $G=G_{\max}$, the maximal building set, then a nested set cannot
have incomparable elements, so in this case $\Ne(G)$ is simply the set of
chains in $L(\M)$.  On the other hand, if $G=G_{\min}$, then
a nested set $S$ can contain incomparable elements, provided that their
join is not irreducible.
\end{example}
\begin{exercise} 
For the Boolean arrangement $\CC_n$, the maximal building set
consists of all nonempty subsets of $[n]$.  Check that maximal nested
sets for $G_{\max}$ are indexed by permutations of $[n]$.
On the other hand, $G_{\min}$ consists of just the rank-$1$ flats
(hyperplanes).  Describe the nested sets in this case.
\end{exercise}
\begin{exercise}\label{ex:NScone}
If $\M$ is connected, the maximal flat $[n]$ is in every building set.
Let $\Ne_0(G)$ denote the nested sets that do not contain $[n]$.
Check that $\Ne(G)$ and $\Ne_0(G)$ are simplicial complexes
on the building set $G$ and $G-\set{[n]}$, respectively.  Show that
$[n]$ is in every maximal nested set, so 
$\Ne(G)$ is a cone over $\Ne_0(G)$ at the vertex $[n]$.
\end{exercise}

These definitions were made to keep track of the incidence structure in
the boundary of the compactification.

\begin{theorem}[\S 4.2, \cite{deCP95}]
The boundary $Y_{\dCP}(\A,G)-p(\P U(\A))$ is a union of divisors
$\set{D_X\colon X\in G-\set{[n]}}$.  For a subset
$S\subseteq G-\set{[n]}$, 
the intersection $\bigcap_{X\in S}D_X$ is nonempty if and
only if $S$ is a nested set for $G$, in which case the intersection is
transversal and irreducible.
\end{theorem}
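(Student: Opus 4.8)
The plan is to reduce to the original iterated-blowup construction of \cite{deCP95} and then read off the combinatorics of nested sets from it. First I would reinterpret each factor map: for a flat $X\in G$ the coordinate projection $\C^n\to\C^X$ restricts on $V$ to a surjection $V\onto V_X$ with kernel $W_X:=\bigcap_{i\in X}H_i$, so $p_X$ restricts on $\P V$ to the linear projection $\P V\dashrightarrow\P V_X$ away from the center $\P W_X$. Hence $Y_{\dCP}(\A,G)=\overline{p(\P U(\A))}$ is the simultaneous graph closure of the family $\set{p_X}_{X\in G}$, and the standard fact I would quote from \cite[\S 3]{deCP95} is that, because $G$ is a \emph{building} set, this closure is obtained from $\P V$ by blowing up the subspaces $\P W_X$ in any order for which $\P W_X$ comes before $\P W_{X'}$ whenever $X\supsetneq X'$. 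The order-isomorphism \eqref{eq:building} defining a building set is precisely what forces every successive center to be smooth and irreducible, so each blowup is a smooth blowup; the divisor $D_X$ is (the proper transform in the final model of) the exceptional divisor introduced when $\P W_X$ is blown up. Since the arrangement is essential, $W_{[n]}=\set{0}$, so the top flat contributes no divisor, which explains the index set $G-\set{[n]}$. Every center $\P W_X$ lies inside $\bigcup_i H_i$, so the process is an isomorphism over $\P U(\A)$ and carries $\bigcup_i H_i$ to the simple normal crossings divisor $\bigcup_{X\in G-\set{[n]}}D_X$ whose complement is $p(\P U(\A))$; this gives the first assertion.

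Granting that the boundary is simple normal crossings, for \emph{any} $S\subseteq G-\set{[n]}$ the intersection $\bigcap_{X\in S}D_X$, when nonempty, is automatically smooth of pure codimension $\abs S$ with the $D_X$ ($X\in S$) meeting transversally along it; so transversality is free, and it remains only to decide when the intersection is nonempty and to check it is then irreducible. For the ``if'' direction, take $S$ nested and extend it to a maximal nested set $S'$ (possible since $\Ne_0(G)$ is a simplicial complex). By the local analysis of \cite[\S 4.2]{deCP95} there is a point $q\in Y_{\dCP}(\A,G)$ with an analytic chart about it in which the divisors $\set{D_X\colon X\in S'}$ are distinct coordinate hyperplanes; then $q\in\bigcap_{X\in S'}D_X\subseteq\bigcap_{X\in S}D_X$, so the latter is nonempty. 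For irreducibility I would invoke the factorization of boundary strata from \cite[\S 4]{deCP95}: for a nested set $S$, the stratum $\bigcap_{X\in S}D_X$ is isomorphic to a product of wonderful compactifications of minors of $\M$ attached to $S$, each of which is smooth and connected, so the stratum is irreducible.

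For the ``only if'' direction, suppose $S$ is not nested, so there are pairwise incomparable $X_1,\dots,X_k\in S$ with $k\ge2$ and $Y:=X_1\vee\cdots\vee X_k\in G$; it suffices to show $D_{X_1}\cap\cdots\cap D_{X_k}=\emptyset$. Every index in $\cl{X_1\cup\cdots\cup X_k}$ cuts out a hyperplane through $\bigcap_{i\in X_1\cup\cdots\cup X_k}H_i$, so $\bigcap_j W_{X_j}=\bigcap_{i\in Y}H_i=W_Y$, that is, $\bigcap_j\P W_{X_j}=\P W_Y$ inside $\P V$. Since $Y\supsetneq X_j$ for every $j$, the center $\P W_Y$ is blown up before any $\P W_{X_j}$, and---again because these subspaces together with the earlier centers form a building set---the common intersection of the proper transforms of the $\P W_{X_j}$ still equals the proper transform of $\P W_Y$ at the moment that center is blown up. Blowing it up then separates the $X_j$: the new exceptional divisor contributes nothing to the common intersection because $(\bigcap_j W_{X_j})/W_Y=0$, while away from that exceptional divisor the intersection is $(\bigcap_j\P W_{X_j})\setminus\P W_Y=\emptyset$. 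Emptiness of a common intersection of subvarieties is preserved by all further blowups, so $D_{X_1}\cap\cdots\cap D_{X_k}=\emptyset$, and therefore $\bigcap_{X\in S}D_X=\emptyset$.

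The hard part is none of the combinatorial bookkeeping above but the two structural facts imported from \cite{deCP95}: that iterating the blowups along the building-set subspaces $\P W_X$ keeps every center smooth and yields a simple normal crossings boundary---here \eqref{eq:building} is used in an essential way---and that the resulting boundary strata factor as products of smaller wonderful models, which is what supplies irreducibility. Once this scaffolding is in place the rest, in particular the emptiness statement, is elementary. I would add, in keeping with the rest of these notes, that one may instead realize $Y_{\dCP}(\A,G)$ as the closure of $\P U(\A)$ in the toric variety of the nested-set subdivision $\Nfan(G)$ of the Bergman fan $\Be(\M)$ (Theorem~\ref{thm:bergman} and \cite{FS05}); from that perspective $G-\set{[n]}$ is the set of rays of $\Nfan(G)$, the nested sets are its cones, and both claims follow from properties of the fan together with Tevelev's sch\"onness \cite[Thm.~1.5]{Te07}.
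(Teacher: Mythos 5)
The paper gives no proof of this statement at all: it is stated with the bracketed attribution \emph{[\S 4.2, \cite{deCP95}]} and followed immediately by a remark that the general De Concini--Procesi construction allows arbitrary subspace arrangements, so the reader is referred to the original source. Your proposal is therefore not competing with an argument in the paper; it is an attempt to reconstruct the cited one, and you should present it as such.

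As a reconstruction it is on the right track. The reinterpretation of $p_X$ as linear projection from $\P W_X$, with $W_X=\bigcap_{i\in X}H_i$, the identification of $Y_{\dCP}(\A,G)$ with the simultaneous graph closure, and the reduction to the iterated-blowup picture along centers $\P W_X$ in increasing order of dimension, all match how De Concini and Procesi actually proceed in \S 3--4 of \cite{deCP95}; the simple-normal-crossings statement and the factorization of closed strata as products of wonderful models of restrictions and contractions are precisely their Theorems in \S 3.2 and \S 4.3, and once you have those, extracting ``transversal and irreducible when nested'' is a one-line corollary as you say. Your observation that the same result can be read off the toric side --- rays of $\Nfan(\M,G)$ indexed by $G-\set{[n]}$, cones indexed by nested sets, plus Tevelev sch\"onness --- is a genuinely different route and is closer to the philosophy of these notes, though it is not what the cited source does.

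Two places in your blowup argument are glossed over in a way worth flagging. First, the claim that ``the common intersection of the proper transforms of the $\P W_{X_j}$ still equals the proper transform of $\P W_Y$ at the moment that center is blown up'' is \emph{not} a consequence of the building-set axiom alone; it is an inductive statement about how proper transforms of members of a building set behave through the sequence of blowups, and in \cite{deCP95} this is exactly the content of the clean-intersection lemma proved in \S 3. You should cite that lemma explicitly rather than attribute it to the order-isomorphism \eqref{eq:building} in the abstract poset. Second, at the very end you pass from ``the proper transforms of the $\P W_{X_j}$ have empty common intersection after blowing up $\P W_Y$'' to ``$D_{X_1}\cap\cdots\cap D_{X_k}=\emptyset$'', but these are different objects: $D_{X_j}$ is the exceptional divisor of the blowup along the proper transform of $\P W_{X_j}$, not the proper transform itself, and one still has to check that the exceptional divisors created over disjoint centers stay disjoint through the remaining blowups. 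This is true, but it requires one more sentence: each $D_{X_j}$ maps (under the composite of later blowdowns) onto the proper transform of $\P W_{X_j}$ in the model where $\P W_Y$ has been blown up, and a point in $\bigcap_j D_{X_j}$ would push down to a point in the empty set. With those two patches the proposal is a faithful outline of the cited argument.
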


It should be mentioned that De Concini and Procesi's construction
is more general, allowing an arbitrary union of linear subspaces
(over any infinite field) in place of an arrangement of hyperplanes;
however, it is worth singling out this special case, since it
arises most often, and the theory is somewhat simpler.  Here we give
the projective version of their construction: an affine variation gives 
a closure of $U(\A)$ with an additional divisor indexed by the maximal
flat.

\begin{example}\label{ex:delA3_nested}
Consider the matroid of Example~\ref{ex:delA3} again.  In this case,
we have
$G_{\min}=\set{1,2,\ldots,5,124,135,12345}$.  A realization of 
the complex $\Ne_0(G_{\min})$ is given by the bold edges shown in
Figure~\ref{fig:irr_refinement}.
\end{example}

Here are some facts about nested sets which we will use in the next section.
Although nested sets need not be chains, in general, they behave like
chains in the following ways.
\begin{lemma}[Prop.~2.8(2), \cite{FK04}]\label{lem:FK1}
Suppose $Y_1,\ldots,Y_k$ are pairwise incomparable elements in a nested
set for a building set $G$, and $Y=Y_1\vee\cdots\vee Y_k$.  Then
the elements of $\max(G\cap[\bottom,Y])$ are precisely $\set{Y_1,\ldots,Y_k}$.
\end{lemma}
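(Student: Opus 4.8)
The plan is to use the product decomposition of the interval $[\bottom,Y]$ furnished by the building set axiom, together with Lemma~\ref{lem:FK2}, to force every maximal element of $G$ below $Y$ to be one of the $Y_i$. If $k=1$ the claim is immediate, since then $Y=Y_1\in G$ is the top of $[\bottom,Y_1]$; so assume $k\geq 2$.

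First I would apply Definition~\ref{def:building} with $X=Y$. Writing $\max(G\cap[\bottom,Y])=\set{Z_1,\ldots,Z_m}$, the canonical join map
\[
\prod_{j=1}^{m}[\bottom,Z_j]\longrightarrow[\bottom,Y],\qquad (x_1,\ldots,x_m)\longmapsto x_1\vee\cdots\vee x_m,
\]
is an order-isomorphism; this is the way the building-set isomorphism of Definition~\ref{def:building} is realized, as in \cite{deCP95,FK04}. In these coordinates the top element $Y$ is the tuple $(Z_1,\ldots,Z_m)$, and each $Z_j$ is the tuple equal to $Z_j$ in coordinate $j$ and to $\bottom$ in every other coordinate.

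Next, for each $i$ we have $Y_i\in G$ and $Y_i\leq Y$, so Lemma~\ref{lem:FK2} gives a unique index $j(i)$ with $Y_i\leq Z_{j(i)}$. Since $Z_{j(i)}$ is supported in coordinate $j(i)$, so is $Y_i$; that is, $Y_i\in[\bottom,Z_{j(i)}]$ and the tuple of $Y_i$ vanishes in every other coordinate. Joining these tuples coordinatewise and using $Y=Y_1\vee\cdots\vee Y_k$ together with the fact that the coordinates of $Y$ are $(Z_1,\ldots,Z_m)$ yields $Z_j=\bigvee_{i:\,j(i)=j}Y_i$ for every $j$. An empty fiber would force $Z_j=\bottom\notin G$, which is absurd, so $i\mapsto j(i)$ is surjective and $m\leq k$.

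Finally I would rule out fibers of size at least two. If $\set{i:j(i)=j}$ had two or more elements, then $\set{Y_i:j(i)=j}$ would be a family of at least two pairwise incomparable members of the given nested set, so by Definition~\ref{def:nested} its join would not lie in $G$; but that join is $Z_j\in G$, a contradiction. Hence each fiber is a singleton, $i\mapsto j(i)$ is a bijection, and $Z_{j(i)}=\bigvee_{i':\,j(i')=j(i)}Y_{i'}=Y_i$ for every $i$. Therefore $\max(G\cap[\bottom,Y])=\set{Z_1,\ldots,Z_m}=\set{Y_1,\ldots,Y_k}$. The one point needing care is the second paragraph: one must know the building-set isomorphism is the canonical join map, so that ``being an element of $G$ below $Y$'' translates into ``being supported in a single coordinate'' — this is exactly what lets Lemma~\ref{lem:FK2} do its work, and the rest is routine coordinatewise bookkeeping.
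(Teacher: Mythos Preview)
The paper does not supply its own proof of this lemma: it is simply quoted from \cite[Prop.~2.8(2)]{FK04} and then used as a black box in the arguments that follow. So there is nothing in the paper to compare your argument against.

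That said, your argument is correct and is in fact essentially the proof given in \cite{FK04}. The one point you yourself flag is genuine: Definition~\ref{def:building} in this paper only asserts the existence of an order-isomorphism \eqref{eq:building}, whereas your proof needs the stronger statement that this isomorphism is realized by the join map $(x_1,\ldots,x_m)\mapsto x_1\vee\cdots\vee x_m$, so that an element of $[\bottom,Y]$ lying below a single $Z_j$ is precisely one supported in the $j$th coordinate. This is how the building-set decomposition is constructed in \cite{deCP95,FK04}, and it is also what underlies Lemma~\ref{lem:FK2}; once you grant it, the rest of your bookkeeping (surjectivity of $i\mapsto j(i)$ from $\bottom\notin G$, injectivity from the nested-set axiom applied to a fiber of size $\geq 2$) is exactly right.
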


\begin{lemma}\label{lem:minimal}
Let $G\subseteq L(\M)$ be a building set, and $S\in \Ne(G)$ a nested set
for $G$.  For each $i\in [n]$, let $S_i=\set{X\in S\colon i\in X}$.
Then, for any $X\in L(\M)$, the collection of nested sets
$
\set{S_i\colon i\in X}
$
has a unique minimal element under inclusion; i.e., there exists some
$i_0\in X$ for which $S_{i_0}\subseteq S_i$, for all $i\in X$.
\end{lemma}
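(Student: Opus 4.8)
The plan is to read off the structure of the sets $S_i$ from the combinatorics of nested sets (Lemmas~\ref{lem:FK1} and \ref{lem:FK2}), and then to exhibit the distinguished index $i_0$. I would proceed in three steps.

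\emph{Step 1: incomparable members of a nested set are disjoint.} If $Y_1,Y_2\in S$ are incomparable and $Y=Y_1\vee Y_2$, then Lemma~\ref{lem:FK1} gives $\max(G\cap[\bottom,Y])=\{Y_1,Y_2\}$, so the building-set identity \eqref{eq:building} applied with the flat $Y$ reads $[\bottom,Y]\cong[\bottom,Y_1]\times[\bottom,Y_2]$. By the correspondence between lattice products and matroid sums recalled in \S\ref{ss:restrs}, this forces $Y=Y_1\sqcup Y_2$ and $\M|Y=\M|Y_1\oplus\M|Y_2$; in particular $Y_1\cap Y_2=\emptyset$.

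\emph{Step 2: each $S_i$ is a chain, and a reduction.} Every member of $S_i$ contains $i$, so by Step~1 the members of $S_i$ are pairwise comparable: $S_i$ is a chain. When $S_i\neq\emptyset$ put $m_i=\min S_i$; since ``$i\in Y$'' persists when $Y$ is enlarged, $S_i=\{Y\in S\colon Y\supseteq m_i\}$, and therefore $S_{i_0}\subseteq S_i$ whenever $m_{i_0}\supseteq m_i$. We may assume $X\neq\bottom$, and that $S_i\neq\emptyset$ for every $i\in X$: if instead $S_i=\emptyset$ for some $i\in X$, then $S_i=\emptyset\subseteq S_j$ for all $j\in X$, so that $i$ is already the desired $i_0$.

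\emph{Step 3: locating $i_0$.} Let $m_{i_1},\dots,m_{i_r}$ be the maximal elements of the finite nonempty poset $\{m_i\colon i\in X\}$; they are pairwise incomparable members of $S$, and every $i\in X$ has $m_i\subseteq m_{i_l}$ for some $l$, hence $X\subseteq W:=m_{i_1}\vee\cdots\vee m_{i_r}$. Lemma~\ref{lem:FK1} identifies $\max(G\cap[\bottom,W])=\{m_{i_1},\dots,m_{i_r}\}$. Assuming $X\in G$ — which holds in particular when $X$ is irreducible, since every building set contains $L_\irr(\M)$ — we have $X\in G\cap[\bottom,W]$, so Lemma~\ref{lem:FK2} yields a single index $l$ with $X\subseteq m_{i_l}$. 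For that $l$, each $i\in X$ lies in $m_{i_l}\in S$, so $m_i\subseteq m_{i_l}$, and Step~2 gives $S_{i_l}\subseteq S_i$ for all $i\in X$. Since $m_{i_l}=m_{i_0}$ for some $i_0\in X$, this $i_0$ works, and any minimal element of $\{S_i\colon i\in X\}$ must then equal $S_{i_0}$, so it is unique.

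The step I expect to be the crux is Step~3, and specifically the appeal to Lemma~\ref{lem:FK2}: Step~1 alone does not forbid two maximal $m_{i_l}$'s, and the natural way to rule that out is to use that $X$ lies below exactly one of them, which requires $X\in G$. This covers every irreducible flat, which I suspect is the intended range of $X$; for a reducible flat $X\notin G$ the two ``halves'' of $X$ can sit on incomparable branches of the forest $S$, and one should check separately whether the conclusion survives (if not, the hypothesis ought to read $X\in G$).
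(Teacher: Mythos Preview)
Your suspicion in the final paragraph is exactly right, and it is worth stating plainly: the lemma as written (for arbitrary $X\in L(\M)$) is false, and both your argument and the paper's own proof require $X\in G$.  A concrete counterexample: take $\M=U_{3,4}$, $G=G_{\min}=\{\{1\},\{2\},\{3\},\{4\},[4]\}$, and the nested set $S=\{\{1\},\{2\},[4]\}$.  Then $S_1=\{\{1\},[4]\}$ and $S_2=\{\{2\},[4]\}$ are incomparable, so for the flat $X=\{1,2\}\notin G$ there is no unique minimal $S_i$.  The paper's proof invokes ``$X\in G$'' at exactly the same place you do (its appeal to Lemma~\ref{lem:FK1} is really Lemma~\ref{lem:FK1} followed by Lemma~\ref{lem:FK2}), and the only application of the lemma, in Proposition~\ref{prop:minimal_dCP}, is for $X\in G$.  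So the hypothesis should indeed read $X\in G$, and your proof is correct under that hypothesis.

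On the comparison: the paper argues by contradiction, choosing for each $i\in X$ an element $Y_i\in S_i\setminus\bigcap_j S_j$, taking the maximal $Y_{i_j}$'s, and reaching a contradiction via Lemmas~\ref{lem:FK1} and~\ref{lem:FK2} once $k\geq2$.  Your Step~3 is essentially the same manoeuvre, phrased constructively using the minima $m_i$ instead of the auxiliary $Y_i$.  Your Step~1 (incomparable nested-set members are disjoint) is extra scaffolding the paper does not use; the paper instead proves that each $S_i$ is a chain separately, as Lemma~\ref{lem:Smin}, via Lemma~\ref{lem:FK2}.  Your route through disjointness is clean and has the advantage of making the forest structure of $S$ explicit up front, though the rank argument behind ``$[\bottom,Y]\cong[\bottom,Y_1]\times[\bottom,Y_2]$ forces $Y_1\cap Y_2=\emptyset$'' (namely $r(Y_1)+r(Y_2)=r(Y_1\vee Y_2)$ together with semimodularity gives $r(Y_1\wedge Y_2)=0$) deserves one more line.
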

\begin{proof}
We can assume $X\neq\bottom$, for which the result is trivial.
Suppose the claim is false: that is, for each $i\in X$, there is a
flat $Y_i\in S_i-\bigcap_{j\in X}S_j$.  Consider the set
$\set{Y_i\colon i\in X}\subseteq L(\M)$, and let
$\set{Y_{i_1},Y_{i_2},\ldots,Y_{i_k}}$ denote
its subset of maximal elements (a pairwise incomparable set).
We must have $k\geq2$, since otherwise
the flats $\set{Y_i}$ would form a chain, in which case the maximal element
$Y_{i_1}\in S_j$ for all $j$, a contradiction.

By the nested set property, the join
\[
Y:=Y_{i_1}\vee\cdots\vee Y_{i_k}
\]
is not an element of $G$.  Since $Y=\bigvee_{i\in X} Y_i$ as well, and
$i\in Y_i$ for each $i\in X$, we see $X\leq Y$.  Since $G$ is a building
set, and $X\in G$, it must be the case that $X\leq Y_{i_j}$ for some $j$,
by Lemma~\ref{lem:FK1}.  But then $Y_{i_j}\in S_i$ for all $i$, a contradiction.
\end{proof}
\begin{lemma}\label{lem:Smin}
The nonempty sets $S_i$ are chains.
For every $X\in S$, there is some $i$ for which $X=\min S_i$.
\end{lemma}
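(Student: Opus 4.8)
The plan is to prove the two assertions in sequence, using Lemma~\ref{lem:minimal} as the main tool for the second. For the first claim, that each nonempty $S_i$ is a chain, I would argue by contradiction: if $S_i$ contained two incomparable flats $Y_1, Y_2$, then since both contain the element $i$, their join $Y_1\vee Y_2$ also contains $i$; but incomparable elements of a nested set have a join that is \emph{not} in $G$, contradicting that all joins of nested flats sharing a common element stay inside the building set—wait, this needs care. Actually the correct observation is simpler: two incomparable flats $Y_1,Y_2 \in S$ have $Y_1\vee Y_2 \notin G$ by the nested set property, but Lemma~\ref{lem:FK1} then tells us the maximal elements of $G\cap[\bottom, Y_1\vee Y_2]$ are exactly $\{Y_1,Y_2\}$, which are distinct maximal elements. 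On the other hand, the single flat $\{i\}$ (a rank-one flat, hence in every building set $G$) satisfies $\{i\}\leq Y_1$ and $\{i\}\leq Y_2$, so $\{i\}$ lies below two distinct maximal elements of $G\cap[\bottom,Y_1\vee Y_2]$. By Lemma~\ref{lem:FK2} applied with the flat $Y_1\vee Y_2$ in place of $Y$ and $\{i\}\in G$, the element $\{i\}$ lies below a \emph{unique} element of $\max(G\cap[\bottom, Y_1\vee Y_2])$—a contradiction. Hence $S_i$ has no incomparable pair and is a chain.

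For the second claim, that every $X\in S$ equals $\min S_i$ for some $i$, I would apply Lemma~\ref{lem:minimal} directly to this $X$: it produces an index $i_0\in X$ with $S_{i_0}\subseteq S_i$ for all $i\in X$. I claim $X = \min S_{i_0}$. First, $X\in S_{i_0}$ since $i_0\in X$ and $X\in S$, so $X$ is an element of this chain; being a nonempty chain (by the first part), $\min S_{i_0}$ is well-defined. It remains to check $X$ is the least element, i.e.\ that no $Y\in S_{i_0}$ has $Y\subsetneq X$. Suppose $Y\in S_{i_0}$ with $Y\leq X$. Then every element $j\in Y$ lies in $X$, so $Y\in S_j$ for each such $j$; but $S_{i_0}\subseteq S_j$ for all $j\in X$, hence in particular for all $j\in Y$, and by minimality of $S_{i_0}$ we actually want the reverse containment. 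The cleanest route: since $Y\leq X$ and $Y$ is a flat, pick any $j\in Y$; then $j\in X$, so $S_{i_0}\subseteq S_j$, meaning every flat of $S$ containing $i_0$ also contains $j$. Applying this to $Y\in S_{i_0}$ gives $j\in Y$, which we already knew. To get $X\leq Y$, note $i_0\in Y$ would force $Y\in S_{i_0}$; instead use that for \emph{every} $j\in X$ we have $S_{i_0}\subseteq S_j$, so $Y\in S_{i_0}$ implies $j\in Y$ for all $j\in X$, i.e.\ $X\subseteq Y$, hence $X\leq Y$. Combined with $Y\leq X$ this gives $Y=X$, so $X=\min S_{i_0}$.

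I expect the main obstacle to be getting the containment direction right in the second part: the definition $S_i=\{X\in S: i\in X\}$ makes larger flats belong to \emph{more} of the sets $S_i$ (a flat $X$ lies in $S_i$ for all $i\in X$), so ``$S_{i_0}$ minimal'' should be read as pinning down which chain-element is the bottom, and the payoff is that $i_0$ witnesses $X$ as $\min S_{i_0}$ precisely because $i_0\in X$ forces $S_{i_0}$ to sit inside every $S_j$ for $j\in X$, and any strictly smaller flat in $S_{i_0}$ would then have to contain all of $X$'s elements. A useful sanity check throughout is the chain case: when $G=G_{\max}$ and $S$ is a chain $X_1\subsetneq\cdots\subsetneq X_m$, one has $S_i = \{X_\ell : i\in X_\ell\}$, which is an up-set in the chain, so $\min S_i$ is the smallest $X_\ell$ containing $i$, and every $X_\ell$ arises this way by choosing $i\in X_\ell\setminus X_{\ell-1}$—this is exactly the statement, and the general proof just replaces ``smallest $X_\ell$ containing $i$'' by the Lemma~\ref{lem:minimal} witness.
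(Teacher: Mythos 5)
Your proof is correct, and the two halves split differently from the paper. For the first claim you take essentially the paper's route: both arguments boil down to the uniqueness statement in Lemma~\ref{lem:FK2}, applied to the rank-one flat $\{i\}\in G$ sitting below two incomparable elements $Y_1,Y_2$ of $S$. The paper states this more tersely (it doesn't explicitly set up $\max(G\cap[\bottom,Y_1\vee Y_2])=\{Y_1,Y_2\}$ via Lemma~\ref{lem:FK1} before invoking uniqueness), but your more careful version is filling in the same argument.

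For the second claim your route is genuinely different. You invoke Lemma~\ref{lem:minimal} to produce the witness $i_0\in X$ with $S_{i_0}\subseteq S_j$ for all $j\in X$, and then directly verify $X=\min S_{i_0}$: any $Y\in S_{i_0}$ lies in every $S_j$ for $j\in X$, hence contains every $j\in X$, hence $X\leq Y$. (Your writeup meanders on the way there, first restricting to $Y\leq X$, but the clean version you arrive at doesn't even need that hypothesis — it shows $X$ is $\leq$ everything in $S_{i_0}$ outright.) The paper instead argues by contradiction without passing through Lemma~\ref{lem:minimal}: if $X$ is not minimal in any $S_i$, pick $Y_i\in S_i$ with $Y_i<X$ for each $i\in X$; then $X\leq\bigvee_{i\in X}Y_i$, and Lemmas~\ref{lem:FK1}--\ref{lem:FK2} force $X\leq Y_i$ for some $i$, a contradiction. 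Both arguments are correct and use the same underlying tools (the building-set lemmas); yours has the advantage of being constructive and of exposing Lemma~\ref{lem:minimal} as the structural input, whereas the paper's is shorter and self-contained but re-derives a contradiction that is morally the same as the one inside the proof of Lemma~\ref{lem:minimal}.
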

\begin{proof}
Suppose $X$ and $Y$ are incomparable elements of $S$, and $X\in S_i$ for
some $i$.
The flat $\set{i}$ is irreducible, hence in $G$.
Since $\set{i}\leq X$, we must have $\set{i}\not\leq Y$,
by Lemma~\ref{lem:FK2}.  That is, which is to say $Y\not\in S_i$.

To check the second claim, suppose some $X\in S$ is not minimal in
any chain $S_i$.  That is, for all $i\in X$, there exists some $Y_i\in S_i$
for which $Y_i<X$.  Then $X\leq \bigwedge_{i\in X}Y_i$, so $X\leq Y_i$ for
some $i$ by Lemma~\ref{lem:FK1}, a contradiction.
\end{proof}
Clearly if $X\in S_i$, then $Y\in S_i$ too, for all $Y\in S$ with $Y\geq X$.
So we see, in particular,
that, if we draw edges between comparable elements of a nested set $S$,
the graph we obtain is a forest, with the leaves at the bottom.

For any nested set $S$, define an equivalence
relation on the set $\bigvee_{X\in S}X\subseteq[n]$ by letting $i\sim_S j$
if and only if $S_i=S_j\neq\emptyset$.  
The corresponding partition has $\abs{S}$ blocks.  

\begin{lemma}\label{lem:partition}
Let $S\in \Ne(G)$ be a nested set.  
Let $S=\set{X_1,\ldots,X_r}$ be any linear extension.
The sets
\begin{equation}\label{eq:difference}
E_k :=\bigvee_{i=1}^{k} X_i-\bigvee_{i=1}^{k-1} X_i\quad\subseteq [n]
\end{equation}
for $1\leq k\leq r$ are all nonempty, and they are the blocks of the
partition $\sim_S$.
\end{lemma}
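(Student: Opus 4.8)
The plan is to identify both partitions with the single family $\{B_{X}\}_{X\in S}$, where $B_{X}:=\{i\in[n]\colon\min S_{i}=X\}$, and then to show $E_{k}=B_{X_{k}}$. First I would describe the blocks of $\sim_{S}$. By Lemma~\ref{lem:Smin} each nonempty $S_{i}$ is a chain, and every $X\in S$ equals $\min S_{i}$ for at least one $i$; so the rule $X\mapsto(\text{a chain }S_{i}\text{ with minimum }X)$ is an injection of $S$ into the set of distinct nonempty chains $S_{i}$, and the latter has exactly $|S|=r$ elements (this is the stated count of blocks of $\sim_{S}$). Hence this map is a bijection, distinct chains have distinct minima, and consequently $S_{i}=S_{i'}$ whenever $\min S_{i}=\min S_{i'}$. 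Thus the blocks of $\sim_{S}$ are exactly the sets $B_{X}$, $X\in S$, each of them nonempty, and it suffices to prove $E_{k}=B_{X_{k}}$ for every $k$.

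Next I would use that, for a linear extension $X_{1},\dots,X_{r}$, every initial segment $T_{k}:=\{X_{1},\dots,X_{k}\}$ is an order ideal of the poset $(S,\leq)$, hence is itself a nested set. The crux is the following claim, call it Lemma~A: \emph{for any nested set $S'$ one has $\bigcup_{X\in S'}X=\bigvee_{X\in S'}X$}, i.e.\ the union of the flats in a nested set is again a flat. Granting Lemma~A applied to $S'=T_{k}$, and noting that $\bigcup_{j\le k}X_{j}=\{i\colon S_{i}\cap T_{k}\neq\emptyset\}$, we obtain
\[
E_{k}=\Bigl(\bigvee_{j\le k}X_{j}\Bigr)\setminus\Bigl(\bigvee_{j<k}X_{j}\Bigr)
=\{i\colon S_{i}\cap T_{k}\neq\emptyset\}\setminus\{i\colon S_{i}\cap T_{k-1}\neq\emptyset\}
=\{i\colon X_{k}\in S_{i},\ X_{j}\notin S_{i}\text{ for }j<k\}.
\]
Because $S_{i}$ is a chain and $X_{1},\dots,X_{r}$ is a linear extension, the last condition is equivalent to $\min S_{i}=X_{k}$, so $E_{k}=B_{X_{k}}$; this is nonempty, and by the first paragraph these sets are precisely the blocks of $\sim_{S}$.

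Finally, the proof of Lemma~A, which is where nestedness enters. Let $R_{1},\dots,R_{q}$ be the maximal elements of $S'$. Every $X\in S'$ satisfies $X\le R_{t}$, hence $X\subseteq R_{t}$, for some $t$, so $\bigcup_{X\in S'}X=R_{1}\cup\cdots\cup R_{q}$ while $\bigvee_{X\in S'}X=R:=R_{1}\vee\cdots\vee R_{q}$. If $q=1$ this union is the flat $R_{1}$ and we are done. If $q\geq 2$, the $R_{t}$ are pairwise incomparable elements of a nested set, so by Lemma~\ref{lem:FK1} they are exactly $\max\!\bigl(G\cap[\bottom,R]\bigr)$, and the building-set property gives an order-isomorphism $[\bottom,R]\cong\prod_{t}[\bottom,R_{t}]$; thus $\M|R$ is a direct sum of matroids with lattices $[\bottom,R_{t}]$. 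A short argument then identifies $\{R_{1},\dots,R_{q}\}$ with the set of connected components of $\M|R$: every component of $\M|R$ is a connected (hence irreducible) flat $\le R$, so it lies in $G\cap[\bottom,R]$ and is maximal there (a strictly larger flat $\le R$ containing it would be a nontrivial matroid sum, hence disconnected), and conversely each $R_{t}\in G$ is connected and so cannot meet two components, whence $R_{t}$ equals a component. Therefore $R_{1},\dots,R_{q}$ are pairwise disjoint and partition $R$, so $\bigcup_{X\in S'}X=R_{1}\cup\cdots\cup R_{q}=R=\bigvee_{X\in S'}X$ is a flat. I expect the main obstacle to be exactly this last step — passing from the abstract product decomposition supplied by the building set to the concrete statement that the maximal nested pieces disjointly partition their join; everything else is routine bookkeeping with chains and linear extensions.
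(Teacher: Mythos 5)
Your proof follows the same essential route as the paper's: express membership in $E_k$ in terms of the chains $S_i$ and their minima, and then match the resulting sets with the blocks of $\sim_S$.  Where you go further is in isolating and proving your Lemma~A, the fact that the union of the flats in a nested set equals their join (equivalently, that each $\bigvee_{i\le k}X_i$ equals $\bigcup_{i\le k}X_i$).  The paper's proof uses this tacitly when it asserts that $j\in E_k$ if and only if $\min S_j=k$ --- a biconditional that fails a priori for an index $j$ lying in $\bigvee_{i\le k}X_i$ but not in $\bigcup_{i\le k}X_i$.  Making this step explicit is a genuine improvement and is exactly where the combinatorics of building sets enters.

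There is, however, a gap in your ``short argument'' identifying $\set{R_1,\dots,R_q}$ with the connected components of $\M|R$.  You claim a component $C$ is maximal in $G\cap[\bottom,R]$ because any strictly larger flat below $R$ is disconnected; but for a building set $G$ other than $G_{\min}$, $G$ may well contain disconnected flats, so maximality in $G\cap[\bottom,R]$ does not follow.  Likewise, your claim that ``each $R_t\in G$ is connected'' is unjustified for general $G$; maximal elements of a nested set for, say, $G_{\max}$ need not be irreducible.  Fortunately, the detour through connected components is unnecessary: since the building-set isomorphism $[\bottom,R]\cong\prod_t[\bottom,R_t]$ is an order-isomorphism, it carries each atom $\set{i}$ (with $i\in R$; recall $\M$ is loopless) to an atom of the product, and an atom of the product lies in exactly one factor $[\bottom,R_t]$.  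Hence each $i\in R$ belongs to exactly one $R_t$, giving $R=R_1\sqcup\cdots\sqcup R_q$ directly.  With this repair, Lemma~A and the rest of your argument go through as written.
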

\begin{proof}
First, we check that $E_k$ is nonempty for all
$k$.  If not, then $\bigvee_{i=1}^k X_i=\bigvee_{i=1}^{k-1} X_i$, which
means $X_k\leq \bigvee_{i=1}^{k-1}X_i$.  By Lemma~\ref{lem:FK1}, then
$X_k\leq X_i$ for some $i\leq k-1$, which would contradict having
chosen a linear extension.  

Our linear extension gives a total order to each subset $S_j$ of $S$.
With respect to this order, $j\in E_k$ if and only if $\min S_j =k$.
Clearly if $i\sim_S j$, the sets $S_i=S_j$ have the same minimum, so
each $E_k$ is a union of one or more
blocks of $\sim_S$.  Since there are only 
$r=\abs{S}$ blocks, though, 
it follows each $E_k$ is equal to exactly one of them.
\end{proof}
In the other direction, 
\begin{lemma}[Prop.~2.8(3), \cite{FK04}]\label{lem:chains}
For any chain of flats $F_1<F_2<\cdots<F_d=[n]$ in 
an intersection lattice $L(\M)$, 
for any building set, there exists a nested set $S\in\Ne(G)$ and a
linear extension $S=\set{X_1,\ldots,X_d}$ for which
$F_k=\bigvee_{i=1}^k X_i$, for all $1\leq k\leq d$.
\end{lemma}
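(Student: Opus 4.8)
\emph{Approach.} The plan is to induct on the rank $d$ of $\M$, peeling the flat $[n]$ (or, when $\M$ is disconnected, a suitable maximal element of $G$) off the top of the chain. First I would note that the hypothesis forces the chain to be maximal: the desired linear extension must satisfy $F_1=X_1\in G\subseteq L(\M)-\set{\bottom}$, and the joins $\bigvee_{i\le k}X_i=F_k$ strictly increase with $k$, so the $d$ ranks $\rank F_1<\cdots<\rank F_d=d$ can only be $1,\dots,d$; in particular $F_{d-1}$ is a coatom of the geometric lattice $L(\M)$. I shall also use the standard fact that $\bigvee\max\bigl(G\cap[\bottom,X]\bigr)=X$ for every $X\in L(\M)$, the isomorphism~\eqref{eq:building} being realized by the join map (cf.~\cite{FK04}). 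For the base case $d=1$ we have $L(\M)=\set{\bottom,[n]}$, so $G\subseteq\set{[n]}$, and $\bigvee\max(G)=[n]$ then forces $[n]\in G$; taking $S=\set{[n]}$ with $X_1=[n]=F_1$ settles this case.

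\emph{Inductive step.} Assume $d\ge2$. Since $\bigvee\max(G)=[n]\ne F_{d-1}$, I can fix some $X_d\in\max(G)$ with $X_d\not\le F_{d-1}$; as $F_{d-1}$ is a coatom, $F_{d-1}<F_{d-1}\vee X_d$ forces $F_{d-1}\vee X_d=[n]=F_d$. Now $[\bottom,F_{d-1}]=L(\M|F_{d-1})$ by~\S\ref{ss:restrs}, a geometric lattice of rank $d-1$ with top $F_{d-1}$, and $G':=G\cap[\bottom,F_{d-1}]$ is a building set for it: for $X\le F_{d-1}$ one has $G'\cap[\bottom,X]=G\cap[\bottom,X]$, so~\eqref{eq:building} for $G$ gives it for $G'$. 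Applying the inductive hypothesis to the chain $F_1<\cdots<F_{d-1}$ yields $S'\in\Ne(G')$ and a linear extension $S'=\set{X_1,\dots,X_{d-1}}$ with $F_k=\bigvee_{i\le k}X_i$ for all $k\le d-1$. I would then take $S:=S'\cup\set{X_d}$, ordered $X_1,\dots,X_{d-1},X_d$.

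\emph{Verification.} The chain condition is immediate, since $\bigvee_{i\le d}X_i=F_{d-1}\vee X_d=[n]$. Because $X_i\le F_{d-1}$ for $i<d$ while $X_d\not\le F_{d-1}$, the flat $X_d$ is new and there is no relation $X_d<X_i$, so the displayed order is a linear extension of $S$. The only substantive point is that $S$ is nested for $G$. Let $W_1,\dots,W_l$ with $l\ge2$ be pairwise incomparable elements of $S$ and put $Z:=W_1\vee\cdots\vee W_l$. If $X_d\notin\set{W_1,\dots,W_l}$, then all $W_i$ lie in $S'$, so $Z\le F_{d-1}$, and $Z\notin G'=G\cap[\bottom,F_{d-1}]$ because $S'\in\Ne(G')$, whence $Z\notin G$. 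If instead, say, $W_l=X_d$, then every $W_i$ with $i<l$ is incomparable to $X_d$, so $Z$ strictly exceeds $X_d$; were $Z\in G$, it would lie below some $Y\in\max(G)$, and then $X_d\le Z\le Y$ with $X_d$ maximal would force $Y=X_d$, hence $Z\le X_d$, a contradiction. So $Z\notin G$ and $S\in\Ne(G)$, completing the induction.

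\emph{Main obstacle.} I expect the crux to be this last case of the nested-set check, in which the newly adjoined flat $X_d$ is incomparable to members of $S'$, which is unavoidable exactly when $\M|F_{d-1}$ is disconnected and so $X_d=[n]$ is not available; the maximality of $X_d$ in $G$ is precisely what makes the argument go through. A secondary, bookkeeping-level hurdle is checking cleanly that $G\cap[\bottom,F_{d-1}]$ is a building set for the restricted lattice and pinning down the rank data ($\rank F_k=k$ and $F_{d-1}$ a coatom) so that the induction is correctly framed.
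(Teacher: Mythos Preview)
The paper does not supply its own proof of this lemma: it is quoted directly from \cite[Prop.~2.8(3)]{FK04} without argument, so there is nothing in the paper itself to compare your proof against.

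Your argument is correct and self-contained. The induction on the rank, peeling off a maximal element $X_d\in\max(G)$ not lying below the coatom $F_{d-1}$, is the natural approach. The verification that $S=S'\cup\{X_d\}$ is nested is clean: the decisive observation, which you identify as the crux, is that maximality of $X_d$ in $G$ forces any $Z\in G$ with $Z\geq X_d$ to coincide with $X_d$, ruling out $Z\in G$ whenever $Z$ is a join involving $X_d$ and an incomparable element. The preliminary bookkeeping---that the chain must be maximal with $\rank F_k=k$, that $G\cap[\bottom,F_{d-1}]$ is a building set for $L(\M|F_{d-1})$, and that placing $X_d$ last yields a linear extension---is also handled correctly.
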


\subsection{Wonderful models II}\label{ss:wonderful2}
Once again, our compactification took place in a projective toric variety.
Let $X_{\dCP}(\M,G)$ denote the closure of $\PT^{n-1}$, embedded diagonally
in the product of projective spaces in \eqref{eq:diagonalmap}.  (Our
notation reflects the fact that the toric variety depends only on the 
matroid of $\A$.)

For each flat $X\in L(\A)$, let
\begin{equation}\label{eq:simplex}
\Delta_X=\conv\set{e_i\colon i\in X}
\end{equation}
denote the weight polytope of $\P^{\abs{X}-1}$.  The weight polytope of
$X_{\dCP}(\M,G)$ is the Minkowski sum $\sum_{X\in G}\Delta_X$, by
Proposition~\ref{prop:minkowski} (see \cite[\S 6]{FS05}).
It should also be mentioned that Ardila, Benedetti and Doker~\cite{ABD10}
have found some remarkable
results relating matroid polytopes and Minkowski sums of simplices,
building on Postnikov's work on generalized permutohedra~\cite{Po09}.
In particular, they obtain a combinatorial
formula for the degree of the toric variety $X_{\M}$.

In Section~\S\ref{ss:vc}, we saw that we could replace our ambient,
complete toric variety by a minimal one, indexed by the Bergman fan.
Similarly, we can describe the minimal toric varieties that give the
wonderful models, using a construction that first appeared in
\cite{FY04}.

We continue to assume that $\M$ is connected.
Let $G\subseteq L(\M)$ be a building set.  
We construct a rational, polyhedral
fan $\Nfan(\M,G)$ in $N_\R$ by taking
the cone over the geometric realization of the complex $\Ne_0(G)$.  That is, 
for each nested set $S\subseteq G$, define a cone
\begin{equation}\label{eq:NScone}
\sigma_S=\R_{\geq0}\,\conv\set{e_X\colon X\in S},
\end{equation}
and let $\Nfan(\M,G)=\set{\sigma_S\colon S\in\Ne(G)}$.
Since $\Ne(G)$ is a simplicial complex, $\Nfan(\M,G)$ is a simplicial fan.
(From Exercise~\ref{ex:NScone}, we would have constructed the same fan
by taking the cone over $\Ne(G)$ instead, since $e_{[n]}=0$ in the 
lattice $N$.) 

\begin{proposition}\label{prop:minimal_dCP}
For any arrangement $\A$ and building set $G$, the toric variety
$X_{\Nfan(\M,G)}$ is a subvariety of $X_{\dCP}(\M,G)$.
\end{proposition}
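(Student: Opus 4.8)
The standing hypothesis is that $\M=\M(V)$ is connected, so $[n]\in G$, $e_{[n]}=0$ in $N$, and both $X_{\dCP}(\M,G)$ and $X_{\Nfan(\M,G)}$ are toric varieties containing the same dense torus $\PT^{n-1}$. By Proposition~\ref{prop:minkowski}, applied to the factors of \eqref{eq:diagonalmap}, $X_{\dCP}(\M,G)$ is a projective toric variety with weight polytope $P:=\sum_{X\in G}\Delta_X$, so its fan $\Sigma:=\Sigma_{X_{\dCP}(\M,G)}$ is the inner normal fan of $P$ (after replacing $X_{\dCP}(\M,G)$ by its normalization if necessary, as in the passage from Theorem~\ref{thm:GGMS} via White's theorem). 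Since a face of a Minkowski sum is a sum of faces of the summands, two functionals lie in the same cone of $\Sigma$ exactly when, for every $X\in G$, their images under the coordinate projection $\pi_X\colon N=\Z^n/\Z e_{[n]}\to N_X:=\Z^X/\Z e_X$ inducing $p_X$ of \eqref{eq:diagonalmap} lie in the same cone of $\Sigma_{\P^{\abs X-1}}$; equivalently, the cones of $\Sigma$ are the intersections $\bigcap_{X\in G}\pi_X^{-1}(\sigma^X)$ over choices of cones $\sigma^X$ of $\Sigma_{\P^{\abs X-1}}$. A toric morphism between two toric varieties restricting to the identity on $\PT^{n-1}$ is an open immersion precisely when the source fan is a subfan of the target fan, so my plan is to prove the single statement that \emph{every cone $\sigma_S$ of $\Nfan(\M,G)$ is a cone of $\Sigma$}: this exhibits $X_{\Nfan(\M,G)}$ as the open toric subvariety of $X_{\dCP}(\M,G)$ associated to the subfan $\Nfan(\M,G)\subseteq\Sigma$, which is the assertion. (One could package the conclusion through Zariski's main theorem instead—the induced birational map to the normal variety $X_{\dCP}(\M,G)$ is an open immersion once it is quasi-finite—but quasi-finiteness is again the subfan property.)

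The easy half is that $\sigma_S$ is \emph{contained} in a cone of $\Sigma$. For $Y\in S$ one has $\pi_X(e_Y)=0$ when $X\subseteq Y$ or $X\cap Y=\emptyset$, and $\pi_X(e_Y)=\sum_{i\in X\cap Y}\bar e_i$ otherwise, so by the description of $\Sigma_{\P^{\abs X-1}}$ in Example~\ref{ex:Pn} the image $\pi_X(\sigma_S)$ lies in the cone spanned by $\set{\bar e_i\colon i\in T_X}$, where $T_X:=\bigcup\set{X\cap Y\colon Y\in S,\ X\not\subseteq Y}$, provided only that $T_X\subsetneq X$. That properness is exactly Lemma~\ref{lem:minimal}: choose $i_0\in X$ with $S_{i_0}\subseteq S_i$ for all $i\in X$; then any $Y\in S$ with $i_0\in Y$ lies in $S_{i_0}$, hence in every $S_i$, hence contains all of $X$, so $i_0\notin T_X$. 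Therefore each $\sigma_S$ is contained in the cone $\tau_S:=\bigcap_{X\in G}\pi_X^{-1}(\sigma^X_{T_X})$ of $\Sigma$, where $\sigma^X_{T_X}$ denotes the corresponding cone of $\Sigma_{\P^{\abs X-1}}$.

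The hard part—the main obstacle—will be the reverse inclusion $\tau_S\subseteq\sigma_S$, i.e.\ showing the cone of $\Sigma$ through $\sigma_S$ is \emph{no larger} than $\sigma_S$. Here I would recover, from $u\in\tau_S$, an expression $u=\sum_{X\in S}\lambda_X e_X$ with all $\lambda_X\ge0$, using the block partition $\sim_S$. Fix a linear extension $S=\set{X_1,\dots,X_r}$ and the blocks $E_k=\bigvee_{i\le k}X_i-\bigvee_{i<k}X_i$ of Lemma~\ref{lem:partition}. Using that the sets $S_i$ are chains (Lemma~\ref{lem:Smin}) one checks that incomparable members of $S$ are disjoint and that $E_k=X_k-\bigcup\set{Y\in S\colon Y<X_k}$; this makes the change of basis between $\set{e_{X_k}}$ and $\set{e_{E_k}}$ unipotent triangular, so the $\set{e_X\colon X\in S}$ form a basis of the span $W$ of $\sigma_S$ and one can solve for the $\lambda_X$ in terms of the values of $u$ on the blocks $E_k$. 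The constraints defining $\tau_S$, restricted to $X\in S$ (where $T_X=\bigcup\set{Y\in S\colon Y<X}$), say exactly that $u$ is constant on each block $E_k$ with value weakly larger than on the blocks lying below $E_k$ in the comparability forest of $S$, while the constraints for $X\in G-S$ keep $u$ inside $W$; feeding these inequalities back through the triangular change of basis yields $\lambda_X\ge0$. This reconstruction is the combinatorial heart of the argument—it is in essence the comparison of the nested set fan with the wonderful toric variety carried out in \cite{FY04} (see also \cite{FS05}), which one may simply invoke. Finally, since $\Ne(G)$ is a simplicial complex the family $\set{\sigma_S\colon S\in\Ne(G)}$ is closed under passage to faces, so once each $\sigma_S$ is a cone of $\Sigma$ these cones constitute a genuine subfan $\Nfan(\M,G)\subseteq\Sigma$, which completes the proof.
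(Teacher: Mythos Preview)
Your ``easy half'' is exactly the paper's entire proof: the paper shows only that each cone $\sigma_S$ of $\Nfan(\M,G)$ is \emph{contained in} some cone of $\Sigma(G)$, by checking that $\min_X(u)$ depends only on the nested set $S$ via Lemma~\ref{lem:minimal}. That yields a toric morphism $X_{\Nfan(\M,G)}\to X_{\dCP}(\M,G)$ which is the identity on $\PT^{n-1}$, and the paper stops there, reading ``subvariety'' informally.

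You go further and argue that $\sigma_S$ is actually \emph{equal} to a cone of $\Sigma(G)$, so that $\Nfan(\M,G)$ is a genuine subfan and $X_{\Nfan(\M,G)}$ an open toric subvariety. This is the literal content of the proposition and is what makes the closure statement in Theorem~\ref{thm:FY} unambiguous, so your extra work is not wasted. Your sketch of the ``hard part'' is essentially correct, with one misattribution: the fact that $u\in\tau_S$ is constant on each block $E_k$ (i.e.\ $u\in W$) already follows from the constraints for $X\in S$, since $X_k-T_{X_k}=E_k$; the constraints for $X\in G\setminus S$ are not needed for that step. Once $u\in W$, the forest structure of $S$ gives, for each $X_k$ with parent $X_{k'}$ in $S$, the identity $\lambda_{X_k}=u|_{E_k}-u|_{E_{k'}}$, and the inequality $u|_{E_k}\geq u|_{E_{k'}}$ is precisely the constraint coming from $X_{k'}\in S$. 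So the telescoping/triangular argument you outline goes through cleanly. In short: the paper's approach is shorter and suffices for a morphism; yours is more careful and delivers the open immersion, at the cost of the extra combinatorics (which, as you note, is in \cite{FY04,FS05}).
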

\begin{proof}
An inclusion of normal toric varieties is given by an inclusion of 
toric fans, so 
let $\Sigma(G)$ denote the normal fan of $\sum_{X\in G}\Delta_X$.
A straightforward argument with the weight polytope shows that
$X_{\dCP}(\M,G)$ is normal, so $\Sigma(G)$ is its toric fan.  It is
enough to check that each cone in the
fan $\Nfan(\M,G)$ is contained in some cone of $\Sigma(G)$.

The normal fan of a Minkowski
sum of polytopes is the coarsest common refinement of their respective
normal fans: that is, two functionals $u,v\in N_{\R}$ lie in the same
open cone of $\Sigma(G)$ if and only if they achieve their minimum on the
same faces of $\Delta_X$, for each $X\in G$.
Faces of a standard simplex are indexed by subsets of coordinates, so
for a given $u\in N_{\R}$ and subset $I\subseteq[n]$, let
\[
\min\nolimits_I(u)=\set{i\in I\colon u_i=m,
\text{~where $m=\min\set{u_j}_{j\in I}$}}.
\]
Then, translating the above, $u\sim v$ in $\Sigma(G)$ if and only if
$\min_X(u)=\min_X(v)$ for all $X\in G$.

Now suppose that $u$ lies in the interior of a cone of $\Nfan(\M,G)$ indexed
by a nested set $S\in \Ne(G)$, so $u=\sum_{X\in S}c_X e_X$ for some
coefficients $c_X>0$.  We need to show that the set $\min_X(u)$ is
independent of the coefficients, for each $X\in G$.  For this, let
$S_i=\set{X\in S\colon i\in X}$, for each $i\in [n]$.  Since
$u_i=\sum_{X\in S_i}c_X$, we have $u_i=u_j$ if $S_i=S_j$, and $u_i<u_j$
if $S_i\subset S_j$, for all $i$, $j$.

By Lemma~\ref{lem:minimal}, the set $\set{S_i\colon i\in X}$ has a minimal
element $S_{i_0}$, for some $i_0\in X$.  But then
$\min_X(u)=\set{i\in X\colon S_i=S_{i_0}}$, which depends only on the
nested set $S$, so we are done.
\end{proof}

\begin{theorem}\label{thm:FY}
For any arrangement $\A$ and building set $G$,
\[
Y_{\dCP}(\A,G)=\overline{\P U(\A)}\subseteq X_{\Nfan(\M,G)}.
\]
Moreover, the toric variety $X_{\Nfan(\M,G)}$ is minimal, in the sense
that $\overline{\P U(\A)}$ intersects every open torus orbit.
\end{theorem}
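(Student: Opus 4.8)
The plan is to deduce Theorem~\ref{thm:FY} from the two things already in hand: the description~\eqref{eq:XdCP} of $Y_{\dCP}(\A,G)$ as a closure inside $\prod_{X\in G}\P V_X$, and Proposition~\ref{prop:minimal_dCP}, which places $X_{\Nfan(\M,G)}$ inside the ambient toric variety $X_{\dCP}(\M,G)$. First I would observe that, since $\P U(\A)$ sits inside the dense torus $\PT^{n-1}\subseteq X_{\dCP}(\M,G)$ and that torus is also the dense torus of $X_{\Nfan(\M,G)}$, the closure $\overline{\P U(\A)}$ taken in $X_{\Nfan(\M,G)}$ is contained in the closure taken in $X_{\dCP}(\M,G)$, which is exactly $Y_{\dCP}(\A,G)$ by Definition~\ref{def:wonderful1}. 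So one containment, $\overline{\P U(\A)}^{X_{\Nfan}}\subseteq Y_{\dCP}(\A,G)$, is essentially formal. The content is the reverse: that $Y_{\dCP}(\A,G)$ actually lands inside the open subvariety $X_{\Nfan(\M,G)}\subseteq X_{\dCP}(\M,G)$ — equivalently, that $Y_{\dCP}(\A,G)$ meets no torus orbit $O(\sigma)$ of $X_{\dCP}(\M,G)$ with $\sigma\notin\Nfan(\M,G)$ — and, for the minimality clause, that it meets every orbit $O(\sigma_S)$ for $S\in\Ne(G)$.

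For the orbit-by-orbit analysis I would use the limit-of-one-parameter-subgroups description of orbits recalled in \S\ref{sec:toric}: a cone $\sigma$ of the normal fan $\Sigma(G)$ of $\sum_{X\in G}\Delta_X$ is determined by a functional $u\in N$, and $O(\sigma)\cap\overline{\P U(\A)}\neq\emptyset$ iff $\lim_{t\to0}u(t)\cdot \P U(\A)$ meets $O(\sigma)$. The key computational step is to identify this limit coordinatewise: the $X$th factor of the map~\eqref{eq:diagonalmap} is $p_X\colon\P^{n-1}\dashrightarrow\P^{\abs{X}-1}$, and the limit in that factor is governed by $\In_u$ applied to $V_X$, hence by $\min_X(u)$ in the notation of the proof of Proposition~\ref{prop:minimal_dCP}. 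The point where $\P U(\A)$ degenerates stays off the boundary of the simplex $\Delta_X$ (i.e. no coordinate in $X$ drops to zero in the limit) precisely when the corresponding restriction/contraction of $\M$ has no loop — this is Theorem~\ref{thm:flacet0}. Translating: $O(\sigma_u)$ meets $\overline{\P U(\A)}$ iff for every $X\in G$ the face $\min_X(u)$ of $\Delta_X$ is "loop-free", and a combinatorial bookkeeping argument — essentially the one in the proof of Proposition~\ref{prop:minimal_dCP} run in reverse, together with Lemmas~\ref{lem:minimal},~\ref{lem:Smin} and~\ref{lem:partition} — should show this happens exactly when the subsets $\{\min_X(u):X\in G\}$ assemble into a nested set $S\in\Ne(G)$, and then $u$ lies in the cone $\sigma_S$ of~\eqref{eq:NScone}. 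This simultaneously proves $Y_{\dCP}(\A,G)\subseteq X_{\Nfan(\M,G)}$ and that it meets $O(\sigma_S)$ for every nested $S$, giving minimality.

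The main obstacle I anticipate is the last matching step: making the passage from "the data $(\min_X(u))_{X\in G}$ has no loops" to "$u\in\sigma_S$ for a nested set $S$" fully rigorous, rather than a suggestive reindexing. Concretely, one must produce, from a loop-free $u$, the nested set $S=\{X\in L(\M):\ u \text{ is constant on } \min_X(u)\text{-strata and}\ \dots\}$ — i.e. recover the flats $X_i$ from the step function $u$ via the chain $\F(u)$ of~\eqref{eq:Fu} — and check against Definition~\ref{def:nested} that the resulting collection really is nested, using the building-set axiom~\eqref{eq:building} and Lemma~\ref{lem:FK1} to rule out incomparable elements whose join lies in $G$. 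The converse inclusion (every $u$ in a cone $\sigma_S$ yields loop-free data) is already contained in the proof of Proposition~\ref{prop:minimal_dCP}, so the new work is genuinely just this one direction. Once the orbit dictionary is pinned down, the theorem follows: $Y_{\dCP}(\A,G)$ is a closed subvariety of $X_{\dCP}(\M,G)$ contained in the open set $X_{\Nfan(\M,G)}$, hence equals the closure of $\P U(\A)$ there, and it meets every torus orbit of that smaller toric variety.
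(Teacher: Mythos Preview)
The paper does not actually supply a proof of Theorem~\ref{thm:FY}: the statement is asserted (with implicit reference to \cite{FY04} and \cite{FS05}) and the text moves on to discuss smoothness and the blowup description. So there is nothing to compare your argument against directly.

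That said, your outline is in the right spirit and uses exactly the toolkit the paper has assembled (Proposition~\ref{prop:minimal_dCP}, the orbit--cone correspondence, Theorem~\ref{thm:flacet0}, and Lemmas~\ref{lem:minimal}--\ref{lem:partition}). The reduction to an orbit-by-orbit analysis in $X_{\dCP}(\M,G)$ is correct, and the observation that the closure in the open subvariety $X_{\Nfan(\M,G)}$ agrees with the closure in the ambient once you know $Y_{\dCP}(\A,G)\subseteq X_{\Nfan(\M,G)}$ is sound. One point to tighten: your criterion ``$O(\sigma_u)$ meets $\overline{\P U(\A)}$ iff for every $X\in G$ the face $\min_X(u)$ is loop-free'' conflates two slightly different things. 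What you actually need is that, for a generic $p\in\P U(\A)$, the limit $\lim_{t\to0}u(t)\cdot p$ in the $X$th factor $\P V_X$ is nonzero on the $\min_X(u)$-coordinates; that is a statement about the restricted matroid $\M|X$ and the flat $\cl{\min_X(u)}$ in it, and Theorem~\ref{thm:flacet0} applies to each factor separately. The harder direction you flag --- recovering a nested set from a ``loop-free'' $u$ --- really does need the chain $\F(-u)$ and Lemma~\ref{lem:chains} (not just Lemmas~\ref{lem:minimal}--\ref{lem:partition}), exactly as in the paper's proof of Theorem~\ref{thm:refines}; you should invoke it explicitly rather than leave it as ``bookkeeping run in reverse.''
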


Once again, the compactifying ambient space depends only on the matroid.  
The situation is parallel to the one
with $Y_{\vc}(\A)$, since our small toric variety
is not complete.  However, Feichtner and Yuzvinsky~\cite{FY04} show that
the cones of $\Nfan(\M,G)$ 
are unimodular, so the toric variety $X_{\Nfan(\M,G)}$ is
always smooth, in contrast to $X_{\Be(\M)}$.

This construction also relates nicely to De Concini and Procesi's description
of $Y_{\dCP}(\A,G)$ as an iterated blowup, as shown in \cite[\S 6]{FY04}.
Roughly speaking, one builds the fan $\Nfan(\M,G)$ by starting with
the fan for $\P^{n-1}$ (Example~\ref{ex:Pn}), noting that $\Nfan(\M,G)$
always contains the rays through $e_i$, for $1\leq i\leq n$.  Then
one adds rays  $e_S$ and subdivides cones,
for the remaining elements $S\in G$, in non-increasing order
with respect to $L(\M)$.  Stellar subdivision in a fan corresponds to
blowing up closed orbits in toric varieties: see, e.g., \cite[\S 3.3]{CLS11}.
By blowing up $\P^{n-1}$ along
proper transforms of coordinate subspaces, one obtains a complete toric
variety; then $X_{\Nfan(\M,G)}$ is the
subvariety obtained by deleting the orbits not indexed by nested sets.

Our constructions
so far are summarized in Table~\ref{tab:cpct}, where $V$ is a linear space,
$\A=\A(V)$, and $\M=\M(V)$.
\begin{table}\label{tab:cpct}
\caption{Arrangement compactifications in \S\ref{sec:cpct}}
\[
\begin{array}{llccc}
\toprule
\multicolumn{2}{c}{\text{Compactification}} & \text{complete toric variety} & \text{weight polytope} & \text{reference} \\ \midrule
\P V &  \text{tautological}     & \P^{n-1} & \Delta^{n-1} & \\
Y(\A) & \text{reciprocal plane} & \P^{n-1} & -\Delta^{n-1} & \eqref{eq:recip}\\
Y_{\vc}(\A) & \text{visible contours} & X_{\M}^{-1}\subseteq \Gr_{d,n}
 & -P_{\M}(\cong
P_{\M^*}) & \eqref{eq:Xvc}\\
Y_{\dCP}(\A,G) & \text{wonderful model} & X_{\dCP}(\M,G)
\subseteq\prod\limits_{X\in G}\P^{\abs{X}-1} 
 &
\sum_{X\in G}\Delta_X & \eqref{eq:XdCP} \\ \bottomrule
\end{array}
\]
\vskip10pt
\[
\begin{array}{lclc}
\toprule
\text{Compactification} & \text{minimal toric variety}  & \text{toric fan} 
&  \text{reference} \\ \midrule
Y_{\vc}(\A) & X_{\Be(\M)}\subseteq X_{\M}^{-1}
&\text{Bergman fan} & \text{Theorem~\ref{thm:bergman}}\\
Y_{\dCP}(\A,G) & X_{\Nfan(\M,G)}\subseteq X_{\dCP}(\M,G)
&\text{nested set fan}& \text{Theorem~\ref{thm:FY}}\\ \bottomrule
\end{array}
\]

\end{table}

\subsection{Comparisons}\label{ss:comparison}
For a fixed arrangement, we now have a number of compactifications.  In
particular, if $G_1\subseteq G_2$ are two building sets for an intersection
lattice $L(\M)$, then the obvious projection map
\[
\prod_{X\in G_2}\P V_X\twoheadrightarrow \prod_{X\in G_1}\P V_X
\]
induces a map of wonderful compactifications $Y_{\dCP}(\A,G_2)\to
Y_{\dCP}(\A,G_1)$.  From De Concini and Procesi's original point of view,
this map blows down the boundary divisor components indexed by $G_2-G_1$.

From the toric point of view, Feichtner and M\"uller prove in
\cite[Thm.~4.2]{FM05} that the fan $\Nfan(\M,G_2)$ is a refinement of the fan
$\Nfan(\M,G_1)$.  More precisely, the former fan is
obtained by stellar subdivisions of that latter.  Geometrically,
the map of toric varieties
\[
X_{\Nfan(\M,G_2)}\twoheadrightarrow X_{\Nfan(\M,G_1)}
\]
blows down the codimension-$1$ torus orbits indexed by $G_2-G_1$.
It follows that the support of the fan $\Nfan(\M,G)$ is independent of $G$:
that is, 
\begin{equation}\label{eq:supports}
\abs{\Nfan(\M,G_{\min})}=\abs{\Nfan(\M,G)}=
\abs{\Nfan(\M,G_{\max})}
\end{equation}
for any building set $G$.

A key discovery in \cite{FS05} is the following (and we outline a proof below).
\begin{theorem}\label{thm:refines}
For any matroid $\M$, the nested set fan $\Nfan(\M,G_{\min})$ 
refines the Bergman fan $\Be(\M)$.
\end{theorem}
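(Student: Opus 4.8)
The plan is to establish the refinement by showing that every cone of the nested set fan $\Nfan(\M,G_{\min})$ is contained in a single cone of the Bergman fan $\Be(\M)$. Recall that $\Be(\M)$ consists of those cones $\sigma_u$ of $\Sigma_{\M}$ for which the degeneration matroid $\M_u$ has no loop, equivalently (by Theorem~\ref{thm:flacet0}) for which each subset in the chain $\F(u)$ is a flat of $\M$. Since $\Nfan(\M,G_{\min})$ is simplicial with maximal cones $\sigma_S$ for maximal nested sets $S\in\Ne(G_{\min})$, it suffices to take a generic $u$ in the relative interior of such a $\sigma_S$ and check two things: first, that $u\in\abs{\Be(\M)}$, i.e.\ $\M_u$ is loop-free; and second, that the entire relative interior of $\sigma_S$ maps into one cone of $\Sigma_{\M}$, i.e.\ the flat chain $\F(u)$ is constant as $u$ ranges over $\operatorname{relint}\sigma_S$. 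Both of these are naturally read off from the combinatorics already developed.

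First I would write $u=\sum_{X\in S}c_X e_X$ with all $c_X>0$, exactly as in the proof of Proposition~\ref{prop:minimal_dCP}, so that $u_i=\sum_{X\in S_i}c_X$ where $S_i=\set{X\in S\colon i\in X}$. The key structural input is Lemma~\ref{lem:Smin} together with Lemma~\ref{lem:partition}: the nonempty sets $S_i$ are chains, they refine to the blocks $E_1,\ldots,E_r$ of the partition $\sim_S$ associated to any linear extension $S=\set{X_1,\ldots,X_r}$ (here $r=d$ since $S$ is maximal and $\Nfan$ has full-dimensional cones), and $\bigvee_{i=1}^k X_i=F_k$ is a flat for each $k$. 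Now observe that since $i\in E_k$ iff $\min S_i=k$, the coordinate $u_i$ depends only on which block contains $i$, and moreover $u_i<u_j$ whenever the block of $i$ comes strictly before the block of $j$ in the order induced by the minima (because $S_i\subsetneq S_j$ forces a strict inequality of the positive sums). Hence the chain $\F(u)$ of equal-coordinate level sets is precisely the chain $\emptyset\subset F_{\pi(1)}\subset F_{\pi(2)}\subset\cdots\subset F_d=[n]$ of partial joins (after reindexing the blocks in increasing order of their $u$-value). Every term in this chain is a flat of $\M$, so by Theorem~\ref{thm:flacet0} the matroid $\M_u$ has no loop, giving $u\in\abs{\Be(\M)}$.

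Next I would check that $\F(u)$ is genuinely constant on $\operatorname{relint}\sigma_S$. The only subtle point is that, a priori, different choices of positive coefficients $c_X$ could reorder the values $u_i$ across different blocks. But the partial order on blocks given by $\min S_i$ is a \emph{total} comparison only when the $S_i$'s are nested; in general the blocks are only partially ordered by inclusion of the corresponding $S_i$, so the flat chain $\F(u)$ realized by generic $u$ is one among possibly several linear extensions of this partial order — and these correspond exactly to the different maximal cones of $\Be(\M)$ that meet $\sigma_S$ along its facets. The clean statement, which I would prove, is: the relative interior of $\sigma_S$ lies in the relative interior of a \emph{single} cone $\sigma_u\in\Sigma_{\M}$ precisely because choosing $u$ in the relative interior fixes, for each pair of incomparable chains $S_i, S_j$, a definite inequality between $u_i$ and $u_j$; hence $\F(u)$ is locally constant, hence constant on the connected set $\operatorname{relint}\sigma_S$. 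This shows $\sigma_S\subseteq\overline{\sigma_u}$ for that $\sigma_u$, i.e.\ each nested set cone sits inside one Bergman cone, which is exactly the assertion that $\Nfan(\M,G_{\min})$ refines $\Be(\M)$.

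The main obstacle I anticipate is the bookkeeping in the previous paragraph: making precise the claim that moving $c_X$ within the positive orthant never crosses a wall of $\Sigma_{\M}$, and handling non-maximal nested sets (lower-dimensional cones $\sigma_S$) where one must verify containment in a Bergman cone of the correct, possibly higher, dimension rather than in a maximal one. One also needs to confirm that the support containment is an equality of supports — i.e.\ that $\Nfan(\M,G_{\min})$ covers all of $\abs{\Be(\M)}$ and not just part of it — which follows from Lemma~\ref{lem:chains}: every maximal flat chain $F_1<\cdots<F_d=[n]$ arises from some nested set with a compatible linear extension, so every generic point of $\abs{\Be(\M)}$ (which by Theorem~\ref{thm:flacet0} is exactly such a chain's cone) lies in some $\sigma_S$. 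Combining the two inclusions gives $\abs{\Nfan(\M,G_{\min})}=\abs{\Be(\M)}$ together with the refinement, completing the argument.
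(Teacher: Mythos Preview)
Your overall strategy matches the paper's: show each nested-set cone $\sigma_S$ lies in a single Bergman cone, then establish equality of supports via Lemma~\ref{lem:chains}. The second step is fine. In the first step there is a minor sign issue --- since $\Sigma_{\M}$ is the \emph{outer} normal fan, the cone containing $u$ is indexed by $\F(-u)$ rather than $\F(u)$, and with your orientation the first set in the chain would be the complement of $\bigcup_{X\in S,\,X\neq[n]} X$, typically not a flat --- but more importantly there is a genuine gap at precisely the point you flag as the main obstacle.

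Your assertion that the chain is locally constant on $\operatorname{relint}\sigma_S$, hence constant by connectedness, is false. If $S$ contains two incomparable flats $X_1, X_2$ (necessarily disjoint as subsets of $[n]$, by Lemma~\ref{lem:FK1} together with the building-set product \eqref{eq:building}), then for $i\in X_1$ and $j\in X_2$ the chains $S_i$ and $S_j$ are incomparable and $u_i-u_j$ changes sign as the positive coefficients $c_X$ vary; so $\F(-u)$ genuinely depends on the induced linear extension of $S$, and the ``locally constant'' claim fails on the hyperplanes $u_i=u_j$ inside the cone. What is actually constant is not the chain but the \emph{face} of $P_{\M}$, equivalently the matroid $\M_{-u}=\bigoplus_k(\M|F_k)/F_{k-1}$. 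The argument you are missing is that this direct sum does not depend on the linear extension: swapping two adjacent incomparable elements $X_a,X_{a+1}$ replaces two consecutive summands by the same pair in the other order, since $\M|(F_{a-1}\vee X_a\vee X_{a+1})/F_{a-1}$ decomposes as a direct sum over the images of $X_a$ and $X_{a+1}$ by the building-set axiom. With the face constant on the dense generic locus, closedness of the normal cone gives $\sigma_S\subseteq\tau$. The paper's own proof glosses over this same step with a one-word appeal to ``convexity'' and even states that $\F(-u)$ depends only on $S$, which is not literally correct; the right invariant is the face, not the chain.
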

This means that there is a natural map of toric varieties,
\[
X_{\Nfan(\M,G_{\min})}\twoheadrightarrow X_{\Be(\M)},
\]
and a corresponding map of compactifications $Y_{\dCP}(\A,G)
\twoheadrightarrow Y_{\vc}(\A)$
for any building set $G$ (by factoring through $G_{\min}=L_\irr(\M)$).

It turns out that, if for every
flat $X\in L_\irr(\M)$ it happens that $\M/X$ is also connected,
then this subdivision
can be accomplished without adding new vertices.  If an even stronger
condition holds, the two fans are actually equal:
\begin{theorem}[Thm.~5.3,~\cite{FS05}]\label{thm:comparison}
The fans $\Nfan(\M,G_{\min})$ and $\Be(\M)$ are equal if and only if
the matroid $(\M|Y)/X$ is connected for all pairs of flats $X\leq Y$,
where $Y\in G_{\min}$.
\end{theorem}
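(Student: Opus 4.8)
The plan is to compare the two fans cone by cone. By Theorem~\ref{thm:refines} the fan $\Nfan(\M,G_{\min})$ is a simplicial refinement of $\Be(\M)$; in particular the two fans have the same support and each cone of the former sits inside a cone of the latter, so they coincide if and only if they have the same set of maximal cones (equality on maximal cones then propagates to all faces). Hence the question is precisely when no maximal cone of $\Be(\M)$ is properly subdivided.

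The first step is the dictionary between maximal cones and combinatorics. By Theorem~\ref{thm:flacet0}, the minimal faces of $P_{\M}$ on which the degeneration matroid is loopless correspond to complete flags of flats $\mathcal F\colon\bottom=F_0\subset F_1\subset\dots\subset F_d=[n]$: such a flag gives the face $P_{\M_{\mathcal F}}=\prod_{i=1}^d\Delta_{F_i-F_{i-1}}$, a product of simplices since $\M_{\mathcal F}=\bigoplus_i(\M|F_i)/F_{i-1}$ has each summand a loopless rank-one matroid, and the maximal cones of $\Be(\M)$ are the normal cones $\tau_{\mathcal F}$ of these faces, with $\tau_{\mathcal F}$ depending only on $\M_{\mathcal F}$. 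The maximal cones of $\Nfan(\M,G_{\min})$ are the simplicial cones $\sigma_S=\R_{\geq0}\,\conv\set{e_X\colon X\in S}$ of the maximal nested sets $S\in\Ne(G_{\min})$. By Lemma~\ref{lem:chains}, every complete flag $\mathcal F$ is realized by a maximal nested set $S$ via $F_k=\bigvee_{i\le k}X_i$; a generic point of $\sigma_S$ then has degeneration matroid $\M_{\mathcal F}$, so $\sigma_S\subseteq\tau_{\mathcal F}$, and as both cones have dimension $d-1$ this inclusion is an equality exactly when $\tau_{\mathcal F}$ is not subdivided. Thus $\Nfan(\M,G_{\min})=\Be(\M)$ if and only if $\tau_{\mathcal F}$ is a cone of $\Nfan(\M,G_{\min})$ for every complete flag $\mathcal F$.

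The heart of the argument is the analysis of $\tau_{\mathcal F}$, the normal cone of the product of simplices $\prod_i\Delta_{F_i-F_{i-1}}$ realized as a face of $P_{\M}$. Building on the face description of matroid polytopes (Theorem~\ref{thm:flacet0} and the discussion following it; cf. the proof of Proposition~\ref{prop:minimal_dCP}) one computes the extreme rays of $\tau_{\mathcal F}$; they are of the form $e_Y$ for certain flats $Y$ determined by $\mathcal F$. I would then establish two claims. First, if $L(\M)$ contains no interval $[X,Y]$ with $Y\in G_{\min}$ and $(\M|Y)/X$ disconnected, then for every complete flag $\mathcal F$ the cone $\tau_{\mathcal F}$ is simplicial, its $d-1$ rays are indexed by irreducible flats, and those flats form a nested set, so $\tau_{\mathcal F}=\sigma_S$ and the two fans are equal. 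Second, if such a bad interval $[X,Y]$ exists then, since $[X,Y]\cong L((\M|Y)/X)$ is a nontrivial product of lattices, one can thread $[X,Y]$ into a complete flag $\mathcal F$ so that among the rays of $\tau_{\mathcal F}$ there appear the flats attached to the components of $(\M|Y)/X$ --- an incomparable family whose join is the irreducible flat $Y$ (after refining into maximal irreducible subflats if necessary) --- whence the rays of $\tau_{\mathcal F}$ do not form a nested set and $\tau_{\mathcal F}$ is not a cone of $\Nfan(\M,G_{\min})$, so the fans differ. The prototype is the cube arrangement of \cite{FS05}, where $[X,Y]$ is a Boolean square, $Y=[n]$, and $\tau_{\mathcal F}$ is a non-simplicial square cone. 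Combining the two claims with the reduction of the previous paragraph yields the stated equivalence.

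The main obstacle is this middle step: the explicit determination of the extreme rays of $\tau_{\mathcal F}$, and the proof that non-subdivision for all complete flags is governed exactly by connectivity of all the matroids $(\M|Y)/X$ with $Y\in G_{\min}$. One must track how a reducible flat occurring in a flag is resolved into irreducible components when passing to normal-cone coordinates, cope with the fact that the Bergman fan can be genuinely non-simplicial, and build the two-way dictionary between ``bad intervals'' in $L(\M)$ and ``bad flags''. The reduction to maximal cones, the flag/nested-set correspondence via Lemma~\ref{lem:chains}, and the passage to the stated matroid condition are then routine bookkeeping.
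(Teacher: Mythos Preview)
The paper does not actually prove Theorem~\ref{thm:comparison}; it is quoted from \cite[Thm.~5.3]{FS05}, and the argument that immediately follows in the text is a proof of Theorem~\ref{thm:refines} only. So there is no in-paper proof to compare your proposal against.

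Your overall strategy is reasonable and follows the shape of the argument in \cite{FS05}: use Theorem~\ref{thm:refines} to reduce to maximal cones, index the maximal cones of $\Be(\M)$ by complete flags of flats (modulo the equivalence $\M_{\mathcal F}=\M_{\mathcal F'}$, which you correctly note), and ask when each such $\tau_{\mathcal F}$ already equals a nested-set cone $\sigma_S$. But, as you explicitly acknowledge, the proposal stops short of the decisive step: identifying the extreme rays of $\tau_{\mathcal F}$ and proving that they form a $G_{\min}$-nested set for every $\mathcal F$ precisely when all $(\M|Y)/X$ with $Y\in G_{\min}$ are connected. Phrases like ``I would then establish two claims'' and ``the main obstacle is this middle step'' make clear that what you have submitted is a plan rather than a proof. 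To complete it you must actually compute the rays of $\tau_{\mathcal F}$ from the facet description of $P_\M$ in Theorems~\ref{thm:flacet0} and~\ref{thm:flacet}, and then carry out both directions of the equivalence; the forward direction in particular requires constructing, from a disconnected $(\M|Y)/X$ with $Y$ irreducible, a specific complete flag whose Bergman cone has rays that violate the nested-set condition, and you have only indicated this by analogy with the cube example.
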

Accordingly, if this matroid condition is satisfied, then the visible
contours compactification $Y_{\vc}(\A)$ is equal to the wonderful
compactification given by the minimal building set.
Before continuing with examples, we give another argument that makes use
of nested set combinatorics.
\begin{proof}[Proof of Theorem~\ref{thm:refines}]
For any matroid $\M$, we first show that 
each cone of $\Nfan(\M,G_{\min})$ is contained
in a cone of $\Be(\M)$. 
It is enough to verify this for a cone $\sigma_S$ (defined in
\eqref{eq:NScone}) for a maximal nested set $S$, so suppose
$u=\sum_{X\in S}c_X e_X$, where each coefficient $c_X>0$.  Then $u_i=u_j$
if $i\sim_S j$, using the equivalence relation from
\S\ref{ss:wonderful}.  Let $\F(-u)=(F_0,\ldots,F_k)$ be the chain of
subsets \eqref{eq:Fu}: then each set $F_i-F_{i-1}$ is a union of 
$\sim_S$-blocks.  Let us temporarily assume that
\begin{equation*}\tag{$\star$}\label{eq:condition}
u_i=u_j\text{~if and only if~}i\sim_S j:
\end{equation*}
i.e., each $F_i-F_{i-1}$ is a single block, 
and impose a total order on the nested set $S$ as follows.  
For flats $X,Y\in S$,
recall $X=\min S_i$ and $Y=\min S_j$ for some $i$, $j$, by
Lemma~\ref{lem:Smin}.  In that case, put $X\prec Y$
if $u_i> u_j$.  This is a linear extension of $S$, since if we
had $X\leq Y$ in $L(\M)$, then $Y\in S_i$ as well, whence $S_j\subseteq S_i$,
and $u_j\geq u_i$.  Write $S=\set{X_1,\ldots,X_d}$, numbering the
flats in $\prec$-order.
By our assumption that each set $F_i-F_{i-1}$ is a block of $\sim_S$, 
for $1\leq i\leq k$, 
by Lemma~\ref{lem:partition}, we see $F_i=\bigvee_{j=1}^i X_i$, for 
each $i$.

This is a chain of flats of $\M$, so by Theorem~\ref{thm:flacet0}, 
$-u$ achieves its minimum on a face $F=\M_{-u}$ indexed by a matroid
without loops, so $u\in\abs{\Be(\M)}$.  
Moreover, the chain $\F(-u)$ and face $F$ depended only on the set $S$
and not on the choice of $u$, provided \eqref{eq:condition} was 
satisfied.  By convexity, the same is true for all $u$ in the interior
of $\sigma_S$, so this cone lies in a cone of $\Be(\M)$.

It follows that $\abs{{\Nfan(\M,G)}}\subseteq \abs{{\Be(\M)}}$.
To show that the supports are equal, consider a vector $u\in\abs{\Be(\M)}$.
By Theorem~\ref{thm:flacet0}, elements of the chain $\F(-u)$ are flats of
$\M$.  By Lemma~\ref{lem:chains},
there exists a linear extension of a nested set $S=\set{X_1,\ldots,X_d}$ 
for the flats in the chain are $\set{\bigvee_{i=1}^k X_i}_{1\leq k\leq d}$.
By Lemma~\ref{lem:partition} and the first part of this proof, $u\in \sigma_S$.
\end{proof}

\begin{example}\label{ex:delA3_compare}
Continuing our usual Example~\ref{ex:delA3_nested}, we see the
nested set complex has one more vertex than the Bergman complex.
Figure~\ref{fig:refined} compares the Bergman fan with the nested
set complexes for the minimal and maximal building sets,
$G_{\min}=L_\irr(\M)$ and $G_{\max}=L_{\geq1}(\M)$.

The toric variety for \ref{fig:irr_refinement} is obtained by blowing up
$\P^4$ along two coordinate lines, then deleting the torus orbits of dimension
$\leq1$, as well as the $2$-dimensional orbits indexed by non-nested pairs
of rays.  The closure of $\P U(\A)$ inside is the blowup of $\P^2$ at
the two triple intersections.

Then $Y_{\vc}(\A)$ is obtained from $Y_{\dCP}(\A,G_{\min})$ by blowing down
the boundary component corresponding to the line through the
two triple points, giving $\P^1\times\P^1$.
\end{example}
\begin{figure}
\subfigure[$B_{\M}$]{
\begin{tikzpicture}[scale=1.65]
\tikzstyle{every node}=[circle,draw,fill=white,inner sep=1.5pt]
\node[label=above:$135$] (A135) at (0,1.7) {};
\node[label=left:$5$] (A5) at (-1,0) {}; 
\node[label=right:$4$] (A4) at (1,0) {};
\node[label=left:$2$] (A2) at (-0.25,0.8) {};
\node[label=right:$3$] (A3) at (0.25,0.8) {};
\node[label=right:$\;124$] (A124) at (0,0.5) {};
\node[label=below:$\hat{1}$] (A1) at (-0.04,0.32) {};
\draw[style=thin,dashed,color=gray] (A2) -- (A1) -- (A3);  -- ghost vertex
\draw[style=thin,dashed,color=gray] (A4) -- (A1) -- (A5);

\draw[line width=4pt,color=white] (A124) -- (A5);
\draw[line width=4pt,color=white] (A4) -- (A124);

\draw[style=thin,color=gray] (A3) -- (A124) -- (A5);
\draw[style=very thick,color=red!40!black] (A2) -- (A3);
\draw[line width=6pt,color=white] (A135) -- (A124);
\draw[style=very thick,color=red!40!black] (A5) -- (A135) -- (A124) -- (A4) -- (A5) -- (A2) -- (A124);
\draw[style=thin,color=gray] (A4) -- (A135) -- (A2);

\draw[style=very thick,color=red!40!black] (A135) -- (A3) -- (A4);

\end{tikzpicture}
}
\subfigure[$\Sigma(\A,G_{\min})$]{
\begin{tikzpicture}[scale=1.65]
\tikzstyle{every node}=[circle,draw,fill=white,inner sep=1.5pt]
\node[label=above:$135$] (A135) at (0,1.7) {};
\node[label=left:$5$] (A5) at (-1,0) {}; 
\node[label=right:$4$] (A4) at (1,0) {};
\node[label=left:$2$] (A2) at (-0.25,0.8) {};
\node[label=right:$3$] (A3) at (0.25,0.8) {};
\node[label=left:$\;124$] (A124) at (0,0.5) {};
\node[label={[label distance=1em]150:$1$}] (A1) at (0,1.1) {};

\draw[line width=4pt,color=white] (A4) -- (A124);

\draw[style=thin,color=gray] (A3) -- (A124) -- (A5);
\draw[style=very thick,color=red!40!black] (A2) -- (A3);
\draw[line width=6pt,color=white] (A135) -- (A1) -- (A124);
\draw[style=very thick,color=red!40!black] (A5) -- (A135) -- (A1) -- (A124) -- (A4) -- (A5) -- (A2) -- (A124);
\draw[style=thin,color=gray] (A4) -- (A135) -- (A2);

\draw[style=very thick,color=red!40!black] (A135) -- (A3) -- (A4);

\end{tikzpicture}\label{fig:irr_refinement}
}
\subfigure[$\Sigma(\A,G_{\max})$]{
\begin{tikzpicture}[scale=1.65]
\tikzstyle{every node}=[circle,draw,fill=white,inner sep=1.5pt]
\node[label=above:$135$] (A135) at (0,1.7) {};
\node[label=left:$5$] (A5) at (-1,0) {}; 
\node[label=right:$4$] (A4) at (1,0) {};
\node[label=left:$2$] (A2) at (-0.25,0.8) {};
\node[label=right:$3$] (A3) at (0.25,0.8) {};
\node[label=left:$\;124$] (A124) at (0,0.5) {};
\node[label=above:$45$] (A45) at (0,0) {};
\node[label=right:$34$] (A34) at (0.625,0.4) {};
\node[label={[label distance=0.2em]150:$25$}] (A25) at (-0.625,0.4) {};
\node (A23) at (0,0.8) {};
\coordinate[label=right:$23$] (lab23) at (0.8,1.2);
\node[label={[label distance=1em]150:$1$}] (A1) at (0,1.1) {};

\draw[line width=4pt,color=white] (A4) -- (A124);

\draw[style=thin,color=gray] (A3) -- (A124) -- (A5);
\draw[style=very thick,color=red!40!black] (A2) -- (A23) -- (A3);
\draw[line width=6pt,color=white] (A135) -- (A1) -- (A124);
\draw[style=very thick,color=red!40!black] (A5) -- (A135) -- (A1) -- (A124) -- (A4) -- (A45) -- (A5) -- (A25) -- (A2) -- (A124);
\draw[style=thin,color=gray] (A4) -- (A135) -- (A2);

\draw[style=very thick,color=red!40!black] (A135) -- (A3) -- (A34) -- (A4);
\draw[->] (lab23) edge [bend right=45] (A23);

\end{tikzpicture}
}
\caption{refinements of $B_{\M}$, Example~\ref{ex:delA3_compare}}
\label{fig:refined}
\end{figure}
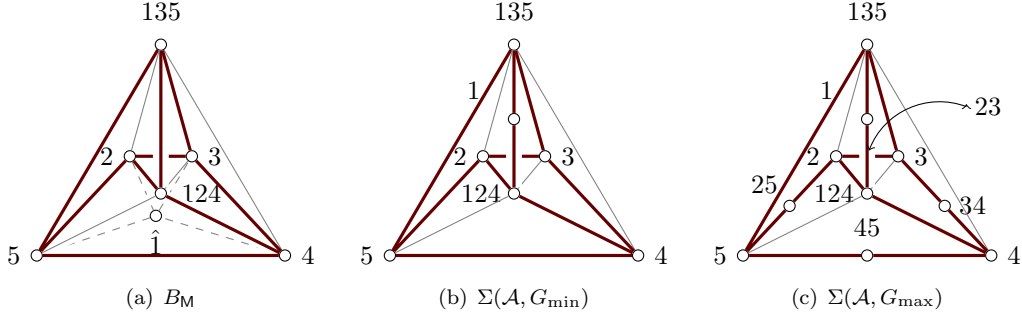
\begin{example}
The braid arrangements, Example~\ref{ex:braid}, are particularly 
interesting.  The original construction of $Y_{\dCP}(\A,G_{\min})$ is the
case $X=\C$ of a configuration space compactification of $F(X,d+1)$ due to 
Fulton and MacPherson~\cite{FM94}.
Points in the arrangement complement for
$V(A_d)$ are also in bijection with genus zero curves with $d+2$
marked points, and $Y_{\dCP}(\A,G_{\min})$
agrees with the Deligne-Knudsen-Mumford compactification.

The visible contours and wonderful compactifications
agree (for $G=L_{\irr}$), which we can check as follows.
If $\pi_X$ is a partition of the
set $[d+1]$, refinements $\pi_Y$ of $\pi_X$ are in bijection with
partitions of the set of blocks of $\pi_X$.  With this in mind,
suppose we have flats $X< Y$ and $Y$ is irreducible.  As a partition
of the blocks of $\pi_X$, we see $\pi_Y$ still has only one block of
size greater than one, so the interval $[X,Y]$ is order-isomorphic to
the intersection lattice of another braid arrangement (of rank $r(Y)-r(X)$).  
Accordingly, $(\M|Y)/X$ is connected, and the claim follows by applying
Theorem~\ref{thm:comparison}.
\end{example}
\begin{example}\label{ex:unif_bergman}
If $\M(V)=U_{d,n}$, the uniform matroid, the situation is straightforward.
Assume $d<n$; then $G:=L_\irr(U_{d,n})=\set{\set{i}\colon 1\leq i\leq n}\cup
\set{[n]}$.  The nested sets are the subsets of size at most $d$, so
the maximal cones in $\Nfan(U_{d,n},G)$ are just the coordinate cones
spanned by $d-1$-element subsets of $\set{e_i\colon 1\leq i\leq n}$.
This refines the Bergman fan, but no strictly coarser fan with the
same support is possible, so $\Be(U_{d,n})=\Nfan(U_{d,n},G)$, as we
see for $U_{2,4}$ in Figure~\ref{fig:octahedron_b}.
Our ambient toric variety
is $\P^{n-1}$, minus all coordinate subspaces of codimension $d$ and
higher, and $Y_{\vc}(\A)=Y_{\dCP}(\A,G)=\P^{d-1}$.
\end{example}

\section{Concluding remarks}
By limiting our discussion to linear spaces in tori (that is, hyperplane
complements), this tutorial stops short of ``modern'' tropical geometry.
In particular, Tevelev's paper \cite{Te07} broadly
generalizes the relationship we saw here between the Gel$'$fand-MacPherson
construction, the Bergman fan, and the visible contours compactification.

We have also neglected any discussion of intersection theory here,
due to the constraints of time and space, although this is also an interesting
part of the story above.  We leave the last exercise as a
possible starting point for further reading in this direction.

\begin{exercise}
We noted in Remark~\ref{rem:degree} that the support of the
Bergman fan is a cone over
a wedge of $\mu$ spheres of dimension $d-2$.  From the discussion in
\S\ref{ss:comparison}, the same is true of the nested set fans $\Nfan(\M,G)$.

It is also known that the number $\mu$ is the degree of
the reciprocal plane $Y(\A)$, from \cite[Lemma~2]{PS06}: that is,
$\mu$ is the number of points at which $Y(\A)$ intersects a generic
projective linear subspace $W$ in $\P^{n-1}$ of dimension $n-d$.

A basic idea of tropical intersection theory is that, under suitable
conditions, one can compute an intersection number by intersecting
tropicalizations: see, e.g., \cite{Ka12} for details.
Using Theorem~\ref{thm:tropical}, we know the tropicalizations of
$Y(\A)$ and $W$ are given by $\abs{\Be_{\M(V)}}$ and $-\abs{\Be_{\M(W)}}$,
respectively.  The linear space $W$ realizes the uniform matroid, so
its Bergman fan was computed in Example~\ref{ex:unif_bergman}.

Use the tropical intersection product (also known as the fan displacement
rule) to give a ``tropical'' proof that $\mu$ is the degree of $Y(\A)$.
\end{exercise}

\begin{ack}
The author is indebted to a number of people for helpful discussions about this 
material, in particular: Maria Angelica Cueto, Mike Falk, Eva Feichtner, 
June Huh, Eric Katz, Diane MacLagan, Kristin Shaw, and Jenia Tevelev.
\end{ack}

\bibliographystyle{amsalpha}
\renewcommand{\MR}[1]
{\href{http://www.ams.org/mathscinet-getitem?mr=#1}{MR#1}}
\newcommand{\arxiv}[1]
{\texttt{\href{http://arxiv.org/abs/#1}{arXiv:#1}}}

\newcommand{\etalchar}[1]{$^{#1}$}
\def\cprime{$'$}
\providecommand{\bysame}{\leavevmode\hbox to3em{\hrulefill}\thinspace}
\providecommand{\MRhref}[2]{%
  \href{http://www.ams.org/mathscinet-getitem?mr=#1}{#2}
}
\providecommand{\href}[2]{#2}

\end{document}